\date{\today}
\newcommand\fg{{\mathfrak g}}
\newcommand\fh{{\mathfrak h}}
\newcommand\fl{{\mathfrak l}}
\newcommand\fp{{\mathfrak p}}
\newcommand\fq{{\mathfrak q}}
\newcommand\fu{{\mathfrak u}}
\newtheorem{theorem}{Theorem}[section]
\newtheorem{algorithm}[theorem]{Algorithm}
\newtheorem{corollary}[theorem]{Corollary}
\newtheorem{conjecture}[theorem]{Conjecture}
\newtheorem{definition}[theorem]{Definition}
\newtheorem{example}[theorem]{Example}
\newtheorem{lemma}[theorem]{Lemma}
\newtheorem{proposition}[theorem]{Proposition}
\newtheorem{remark}[theorem]{Remark}
\begin{document}
\title[Unitary dual of $U(p,q)$]{On some conjectures of the unitary dual of $U(p,q)$}
\author{Kayue Daniel Wong}

\address{Mathematics Research Center, School of Science and Engineering, The Chinese University of Hong Kong, Shenzhen, Guangdong 518172, China }
\address{Institute of Mathematical Sciences, The Chinese University of Hong Kong, Shatin, Hong Kong, China}
\email{kayue.wong@gmail.com}

\begin{abstract}
In this manuscript, we introduce the notion of fundamental cases to study the unitary dual of $U(p,q)$. As applications, we prove of a conjecture of Salamanca-Riba and Vogan stated in 1998, as well as the fundamental parallelepiped (FPP) conjecture of Vogan in 2023 for $U(p,q)$. 
\end{abstract}

\maketitle
\setcounter{tocdepth}{1}

\section{Introduction}\label{sec:intro}
A main unsolved problem in the representation theory of real reductive Lie groups $G$ is the classification of all irreducible, unitarizable $(\mathfrak{g},K)$-modules, i.e. the unitary dual $\widehat{G}$. In \cite{SRV98}, Salamanca-Riba and Vogan suggested that one can begin by partitioning all irreducible, admissible 
$(\mathfrak{g},K)$-modules by their lowest $K$-types, and reduce the study of $\widehat{G}$ by focusing on $(\mathfrak{g},K)$-modules with {\bf unitarily small lowest $K$-types} (see Section \ref{sec-SV} for details).  More precisely, they proposed that the lowest $K$-types of all irreducible, unitary $(\fg,K)$-modules must either be unitarily small, or it is a component of a cohomologically induced module from a unitary representation in the good range.

\smallskip
In order for the reduction described above to work, they conjectured that in order for an irreducible $(\mathfrak{g}, K)$-module $X$ with real infinitesimal character $\Lambda$ and unitarily small lowest $K$-types to be unitary, a necessary condition on $\Lambda$ must be satisfied 
(see \cite[Conjecture 5.7]{SRV98} or Conjecture \ref{conj-original} below). By \cite[Theorem 5.8]{SRV98}, the validity of this conjecture immediately implies that one can study $\widehat{G}$ through the above reduction process. 

\smallskip
On the other hand, Vogan \cite{V23} recently proposed the 
{\bf Fundamental Parallelepiped (FPP) conjecture} for $G$. Namely, suppose $X$ is an irreducible $(\mathfrak{g}, K)$-module with real infinitesimal character $\Lambda$ such that it is not cohomologically induced from any proper $\theta$-stable parabolic subalgebras within the good range, then {\it another} necessary condition on $\Lambda$ needs to be satisfied in order for $X$ to be unitary (Conjecture \ref{conj-fpp}).

\smallskip

Essentially, both conjectures propose that under different hypotheses on $X$, there are different necessary conditions on its infinitesimal character so that $X$ is unitary.
The main objective of this manuscript is to view these conjectures from a unified standpoint, so that one can prove the following:
\begin{theorem} \label{thm-main}
Conjecture 5.7 of \cite{SRV98} and the FPP conjecture hold for $G = U(p,q)$.
\end{theorem}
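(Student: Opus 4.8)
The plan is to combine the structure of the unitary dual of $G = U(p,q)$ with the reduction to \emph{fundamental cases} developed in the body of the paper, and then to verify the two necessary conditions directly on those cases. I would carry this out in three steps.

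\textbf{Step 1: reduction to fundamental cases.} Every irreducible unitary $(\fg,K)$-module $X$ of $U(p,q)$ with real infinitesimal character is built from a fundamental representation $X_L$ of a $\theta$-stable Levi subgroup $L$---a product of groups of the form $U(p_i,q_i)$ and $GL(n_j,\bC)$---by cohomological induction $\CR_\fq^S$ together with real parabolic induction, with the inducing data in the good (resp. weakly fair) range. The first task is to show that both conjectures are \emph{inherited} under such induction. If $X$ is properly induced in the good range, then the hypothesis of the FPP conjecture already fails for $X$, so there is nothing to check; for Conjecture \ref{conj-original}, the bottom-layer description of lowest $K$-types under $\CR_\fq^S$ and the translation of the infinitesimal character by $\rho(\fu)$ should show that u-smallness of the lowest $K$-types of $X$ forces u-smallness of those of $X_L$, and that the necessary condition of Conjecture \ref{conj-original} for $X$ follows from the one for $X_L$ on the smaller group $L$. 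Granting this, Theorem \ref{thm-main} is reduced to the fundamental cases: those $X$ with $L = G$ that are not cohomologically induced in the good range from any proper $\theta$-stable parabolic subalgebra.

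\textbf{Step 2: enumerate the fundamental cases.} For $U(p,q)$ the classification should yield a short, explicit list. Up to tensoring with a unitary character and up to $K$-conjugacy, the fundamental representations are expected to be the one-dimensional representations together with the genuinely new unitary constituents of the degenerate principal series attached to the $GL(n_j,\bC)$ data. For each such $X$ one writes its infinitesimal character $\Lambda$ explicitly: in the one-dimensional case $\Lambda$ is an explicit small vector built from $\rho$, and in the remaining cases $\Lambda$ depends on continuous complementary-series parameters confined to an explicit bounded range.

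\textbf{Step 3: verify the two conditions.} For each $\Lambda$ produced in Step 2 one checks, by direct computation, that $\Lambda$ satisfies the necessary condition of Conjecture \ref{conj-original} and lies in the fundamental parallelepiped of Conjecture \ref{conj-fpp}. Once $\Lambda$ is in hand this is elementary: the boundedness of the complementary-series parameters is exactly what keeps $\Lambda$ within the region prescribed by Conjecture \ref{conj-original} and within the fundamental parallelepiped, and the one-dimensional case is immediate.

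The main obstacle is Step 1, in two respects. First, the inheritance of Conjecture \ref{conj-original} is delicate because u-smallness of lowest $K$-types is not manifestly preserved by $\CR_\fq^S$; establishing it uniformly requires combining the Vogan--Zuckerman description of $K$-types with the precise shape of the u-small region for $U(p,q)$. Second, one must be certain that the list of fundamental cases in Step 2 is exhaustive---equivalently, that no irreducible unitary representation of $U(p,q)$ with real infinitesimal character escapes the reduction---and this is where the explicit structure theory of $\theta$-stable parabolics of $U(p,q)$, together with the intertwining-operator computations on the relevant degenerate principal series, is indispensable.
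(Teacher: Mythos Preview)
Your strategy has a genuine circularity problem. In Step 1 you assert that every irreducible unitary module of $U(p,q)$ is built, by good-range cohomological induction and real parabolic induction, from a ``fundamental'' representation on a smaller Levi. But the statement that a unitary $X$ which is not cohomologically induced in the good range must already be one of an explicit short list (Step 2) is essentially what the FPP conjecture is trying to bound---you are assuming a classification of the unitary dual in order to prove a necessary condition for unitarity. In particular, your Step 2 enumeration (one-dimensional characters plus certain degenerate principal series) is not established anywhere for $U(p,q)$; the unitary dual is open, and the ``fundamental'' representations in the paper's sense (Definition~\ref{def-fund}) are a much larger combinatorially defined class, not a short list of known unitary modules.

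The paper avoids this entirely by running the contrapositive. It never enumerates the unitary fundamental modules; instead, for a fundamental module $\Pi$ whose infinitesimal character $\Lambda$ violates $\langle\Lambda,\alpha^\vee\rangle\le 1$, it proves directly that $\Pi$ is non-unitary up to level $\fp$ (Theorem~\ref{thm-upq}), using Parthasarathy's Dirac inequality together with a Jantzen-filtration deformation argument (Proposition~\ref{prop-parallel}, Theorem~\ref{thm-semipm}) that tracks signatures of specific $K$-types as one pushes $\nu$ to infinity. The reduction of a general $X$ to fundamental pieces is not by good-range induction but by partitioning the combinatorial $\theta$-stable datum into fundamental sub-data (Section~\ref{sec-general}); non-unitarity of any piece propagates to $X$ via bottom-layer $K$-type arguments (Corollary~\ref{cor-bottom}, Proposition~\ref{prop-bottom}), and the convex-hull / interlaced-segment bookkeeping (Sections~\ref{sec-general}--\ref{sec-fpp}) finishes both conjectures. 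So the missing idea in your proposal is precisely this: prove \emph{non-unitarity} when the condition fails, rather than trying to list the unitary modules and check the condition on them.
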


More explicitly, the proof of Theorem \ref{thm-main} follow almost immediately on a detailed study of the {\bf fundamental cases}, whose precise definition is given in Section \ref{sec-fund}. These cases cover all the {\bf basic cases} defined in \cite{KS83}, which were effectively applied to study a large part of the unitary dual of $U(p,2)$. One therefore expects that the fundamental cases will play an vital role in the study of the unitary dual of $U(p,q)$ and, more generally, for all real reductive groups. 

\medskip
The manuscript is organized as follows: In Section \ref{sec-prelim}, we recall Langlands' classification of irreducible admissible $(\mathfrak{g},K)$-modules in \cite{V82}, and what it means for these modules to be unitary. Subsequently, we state the conjectures of Salamanca-Riba-Vogan (Conjecture \ref{conj-original}) and Vogan (Conjecture \ref{conj-fpp}). In Section \ref{sec-nonunit}, we give some general tools for detecting non-unitarity of irreducible $(\mathfrak{g},K)$-modules. Afterwards, we focus on the case of $G = U(p,q)$ in Section \ref{sec-upq}, and introduce some combinatorics relevant to the parametrization of irreducible modules of these groups. In Section \ref{sec-fund}, we define the notion of fundamental modules for $U(p,q)$, and prove a non-unitarity theorem (Theorem \ref{thm-upq}) for these modules. Finally, Section \ref{sec-general} and Section \ref{sec-fpp} are devoted to proving Salamanca-Riba-Vogan's conjecture and Vogan's FPP conjecture for $U(p,q)$ respectively.



\section{Preliminaries} \label{sec-prelim}
\subsection{Langlands classification of irreducible $(\mathfrak{g},K)$-modules}
In this section, we follow \cite{V82} for a construction of all irreducible $(\mathfrak{g},K)$-modules using \emph{$\theta$-stable data}.

Let $G$ be a connected reductive Lie group, with Cartan decomposition $\mathfrak{g}_0 = \mathfrak{k}_0+\mathfrak{p}_0$, that is, $\mathfrak{k}_0$ and $\mathfrak{p}_0$ are the $+1$ and $-1$ eigenspaces of a Cartan involution $\theta$ of $\mathfrak{g}_0$. We drop the subscript $0$ for the complexified Lie algebras.

Let $\mathfrak{h}_0 = \mathfrak{t}_0 + \mathfrak{a}_0$ be the fundamental Cartan subalgebra of $\mathfrak{g}_0$, that is, $\mathfrak{t}_0$ is the Cartan subalgebra of $\mathfrak{k}_0$ (up to conjugation). Fix root systems $\Delta(\mathfrak{k},\mathfrak{t})$ and $\Delta(\mathfrak{g},\mathfrak{t})$ such that
$\Delta(\mathfrak{k},\mathfrak{t}) \subseteq \Delta(\mathfrak{g},\mathfrak{t})$, and a positive root system $\Delta^+(\mathfrak{k},\mathfrak{t})$. 

By Cartan-Weyl's highest weight theory, every irreducible representation $\delta \in \widehat{K}$ has a highest weight $\mu(\delta)$, where $\mu(\delta)$ is a dominant weight in $\Delta^+(\mathfrak{k},\mathfrak{t})$.
For any admissible $(\mathfrak{g},K)$-module $X$, we say $\delta \in \widehat{K}$ is a {\bf $K$-type of $X$} if the multiplicity $m^X(\delta) := [\delta : X|_K]$ is nonzero. We say $\delta$ is a {\bf lowest $K$-type} of $X$ if the value
$\|\mu(\delta)+2\rho(\mathfrak{k})\|$ is the smallest among all $K$-types of $X$ (from now on, we write $\rho(\Phi)$ as half the sum of all positive roots in the set of roots $\Phi$, and $\rho(\mathfrak{w}) := \rho(\Delta^+(\mathfrak{w},\mathfrak{t}))$).

For any $K$-dominant weight $\mu \in \Delta^+(\mathfrak{k},\mathfrak{t})$, define
\begin{equation} \label{def-lambdaa}
\lambda_a(\mu) := P(\mu + 2\rho(\Delta^+(\mathfrak{k},\mathfrak{t})) - \rho(\Delta^+(\mathfrak{g},\mathfrak{t}))) \in \mathfrak{t}^*,
\end{equation}
where $\Delta^+(\mathfrak{g},\mathfrak{t})$ is chosen such that $\mu + 2\rho(\Delta^+(\mathfrak{k},\mathfrak{t}))$ is dominant, and $P$ is the projection map to the dominant Weyl chamber as defined in \cite[Definition 1.2]{SRV98}. We also write $\lambda_a(\delta) := \lambda(\mu(\delta))$ for any $\delta \in \widehat{K}$.

\smallskip
Also, For each $\lambda_a \in \mathfrak{t}^*$, define a $\theta$-stable parabolic subalgebra
$$\mathfrak{q}(\lambda_a) = \mathfrak{g}(\lambda_a) + \mathfrak{u}(\lambda_a)$$
satisfying $\Delta(\mathfrak{g}(\lambda_a),\mathfrak{t}) = \{\alpha \in \Delta(\mathfrak{g},\mathfrak{t})\ |\ \langle \alpha,\lambda_a\rangle = 0\}$ and $\Delta(\mathfrak{u}(\lambda_a),\mathfrak{t}) = \{\alpha \in \Delta(\mathfrak{g},\mathfrak{t})\ |\ \langle \alpha,\lambda_a\rangle > 0\}$.

The theorem below explains the importance of $\lambda_a$:
\begin{theorem}[\cite{SRV98} Theorem 2.9] \label{thm-lambdaa} 
Let $G$ be a connected reductive Lie group. 

\noindent (a) Consider
$$\Pi_a^{\lambda_a}(G) := \Big\{\begin{matrix} \pi\ \text{adm. irred.}\\ 
(\mathfrak{g},K)\text{-module}\end{matrix}\ \Big|\ 
\begin{matrix}\text{a lowest K-type}\ \delta\ \text{of}\ \pi\\ 
\text{satisfies}\ \lambda_a(\delta) = \lambda_a \end{matrix}\Big\}.$$ 
There is a bijection
\begin{equation} \label{eq-lambdaa}
\Pi_a^{\lambda_a - \rho(\mathfrak{u}(\lambda_a))}(G(\lambda_a)) \longrightarrow \Pi_a^{\lambda_a}(G)
\end{equation}
More precisely, let $Z_a \in \Pi_a^{\lambda_a - \rho(\mathfrak{u}(\lambda_a))}(G(\lambda_a))$ be an irreducible $(\mathfrak{g}(\lambda_a), G(\lambda_a) \cap K)$-module. Then the following holds:
\begin{itemize}
\item The lowest $G(\lambda_a) \cap K$-types of $Z_a$,
$$\{\eta_1, \dots,\eta_r\}$$
satisfy $\lambda_a(\eta_1) = \dots = \lambda_a(\eta_r) = \lambda_a - \rho(\mathfrak{u}(\lambda_a))$;
\item There is a unique irreducible subquotient $X$ in the cohomologically induced module $\mathcal{L}_{\mathfrak{q}(\lambda_a),S}(Z_a)$
(see \cite[Section 5]{KV95} for more details) for $S := \dim(\mathfrak{u}(\lambda_a) \cap \mathfrak{p})$, whose lowest $K$-types are precisely
$$\{\mathcal{L}_{\mathfrak{q}(\lambda_a),S}^K(\eta_1), \dots, \mathcal{L}_{\mathfrak{q}(\lambda_a),S}^K(\eta_r)\}$$
satisfying $\lambda_a(\mathcal{L}_{\mathfrak{q}(\lambda_a),S}^K(\eta_1)) = \dots = \lambda_a(\mathcal{L}_{\mathfrak{q}(\lambda_a),S}^K(\eta_r)) = \lambda_a$.
\end{itemize}
And the bijection \eqref{eq-lambdaa} is given by
$$Z_a \mapsto X.$$
\noindent (b) Suppose $\beta$ is a $(G(\lambda_a) \cap K)$-type in $Z_a$ such that $\mu(\beta) + 2\rho(\mathfrak{u}(\lambda_a) \cap \mathfrak{p})$ is $K$-dominant (this includes all lowest $K$-types $\eta_i$ of $Z_a$), then $\mathcal{L}_{\mathfrak{q}(\lambda_a),S}^K(\beta) \in \widehat{K}$ is irreducible with highest weight $\mu(\beta) + 2\rho(\mathfrak{u}(\lambda_a) \cap \mathfrak{p})$. Moreover, the map \eqref{eq-lambdaa} maps these $K$-types injectively to $X$, so that
$$m^{Z_a}(\beta) = m^X\left(\mathcal{L}_{\mathfrak{q}(\lambda_a),S}^K(\beta)\right)$$
These $\mathcal{L}_{\mathfrak{q}(\lambda_a),S}^K(\beta)$'s are called {\bf $\mathfrak{q}(\lambda_a)$-bottom layer $K$-types} of $X$.
\end{theorem}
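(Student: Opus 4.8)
The plan is to realize the bijection \eqref{eq-lambdaa} explicitly as cohomological induction from the $\theta$-stable parabolic attached to $\la_a$. Write $\fq = \fq(\la_a)$, $\fl = \fg(\la_a)$, $\fu = \fu(\la_a)$, $L = G(\la_a)$, and $S = \dim(\fu \cap \fp)$, so that $\CL_{\fq,S}$ carries $(\fl, L\cap K)$-modules to (virtual) $(\fg,K)$-modules. Given $Z_a \in \Pi_a^{\la_a - \rho(\fu)}(L)$, the candidate image is the subquotient of $\CL_{\fq,S}(Z_a)$ singled out in the statement, and one must: (i) show this subquotient exists and has the asserted lowest $K$-types; (ii) establish part (b); and (iii) prove the assignment $Z_a \mapsto X$ is a bijection.

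The engine for (i) and (ii) is Vogan's bottom-layer $K$-type theorem (see \cite[Section 5]{KV95}): for any $(L\cap K)$-type $\beta$ of $Z_a$ with $\mu(\beta) + 2\rho(\fu \cap \fp)$ $K$-dominant, the $K$-module $\CL_{\fq,S}^K(\beta)$ is irreducible of highest weight $\mu(\beta) + 2\rho(\fu \cap \fp)$, and the bottom-layer map identifies $m^{Z_a}(\beta)$ with the multiplicity of $\CL_{\fq,S}^K(\beta)$ in $\CL_{\fq,S}(Z_a)$. The first thing to verify is that every lowest $(L\cap K)$-type $\eta_i$ of $Z_a$ meets the $K$-dominance hypothesis: this is exactly what the condition $\la_a(\eta_i) = \la_a - \rho(\fu)$ is engineered to guarantee. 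Unwinding \eqref{def-lambdaa} over $\fl$, together with the defining property of $\fq(\la_a)$ that every root of $\fu$ pairs strictly positively with $\la_a$, forces $\mu(\eta_i) + 2\rho(\Delta^+(\fk,\ft))$ to remain $K$-dominant after incorporating the shift $2\rho(\fu\cap\fp)$.

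Next one pins down the lowest $K$-types of the induced module. Since $\CL_{\fq,S}^K$ shifts the highest weight by $2\rho(\fu\cap\fp)$ and $\la_a(\CL_{\fq,S}^K(\eta_i)) = \la_a$, a norm computation with $\|\mu(\delta) + 2\rho(\fk)\|$ — exploiting that $\la_a$ is the projection $P$ of the relevant weight and that $\fq(\la_a)$ is maximal for this $\la_a$ — shows that no $K$-type strictly smaller than the $\CL_{\fq,S}^K(\eta_i)$ can occur in any subquotient of $\CL_{\fq,S}(Z_a)$. Hence the subquotient $X$ generated by the bottom layer is the unique irreducible one containing these $K$-types, and its lowest $K$-types are precisely $\{\CL_{\fq,S}^K(\eta_1),\dots,\CL_{\fq,S}^K(\eta_r)\}$; part (b) is then immediate from the bottom-layer multiplicity identity applied to all $\beta$ with $\mu(\beta)+2\rho(\fu\cap\fp)$ $K$-dominant.

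Finally, bijectivity. Injectivity follows because part (b) reads off from $X$ the multiplicities of all lowest $(L\cap K)$-types of $Z_a$, and Vogan's reconstruction of an irreducible module from its lowest-$K$-type data then recovers $Z_a$ up to isomorphism. Surjectivity is the claim that every $X \in \Pi_a^{\la_a}(G)$ arises this way: if $\delta$ is a lowest $K$-type of $X$ with $\la_a(\delta) = \la_a$, then by Vogan's classification $X$ is the unique irreducible subquotient of a module cohomologically induced from the $\theta$-stable parabolic attached to the $\la_a$-invariant of $\delta$, which is exactly $\fq(\la_a)$, and this produces the required $Z_a$ on $L$ with $\delta$ among the bottom-layer $K$-types coming from a lowest $(L\cap K)$-type. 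The main obstacle is the third paragraph: controlling which $K$-types actually appear in the potentially reducible module $\CL_{\fq,S}(Z_a)$ and ruling out smaller ones is the heart of Vogan's minimal-$K$-type theory, and it genuinely uses the geometry of $\la_a$ and the maximality of $\fq(\la_a)$ rather than merely formal properties of cohomological induction.
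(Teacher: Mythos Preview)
The paper does not give its own proof of this theorem: it is stated as \cite[Theorem 2.9]{SRV98} and simply quoted as background. There is therefore nothing in the paper to compare your proposal against line by line.

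That said, your outline is broadly the right shape for how the result is actually established in the literature. The core ingredients --- Vogan's bottom-layer theory from \cite[Chapter V]{KV95}, the fact that the lowest $(L\cap K)$-types of $Z_a$ lie in the bottom layer, and the minimal $K$-type classification to produce bijectivity --- are exactly what one needs. Two places where your sketch is thin: (1) the ``norm computation'' in your third paragraph, showing that no strictly smaller $K$-type can appear in $\mathcal{L}_{\fq,S}(Z_a)$, is really the content of \cite[Proposition 2.6 and surrounding discussion]{SRV98} and requires a careful comparison of $\|\mu + 2\rho(\fk)\|$ on $G$ versus on $L$, not just the observation that $\la_a$ is a projection; and (2) your injectivity argument (``reconstruct $Z_a$ from its lowest $K$-type multiplicities'') is not quite right as stated --- two non-isomorphic irreducibles can share all lowest $K$-types with the same multiplicities, so one actually needs that the bottom-layer map is functorial and that the Langlands parameters of $Z_a$ and $X$ determine each other, which is the content of the $\theta$-stable Langlands classification in \cite{V82}. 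These are not fatal gaps, but they are the places where real work happens beyond formal bottom-layer bookkeeping.
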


In view of the above theorem, it is essential to classify the irreducible representations $Z_a\in \Pi_a^{\lambda_a - \rho(\mathfrak{u}(\lambda_a))}(G(\lambda_a))$. 
By \cite[Proposition 4.1]{SRV98}, $L := G(\lambda_a)$ is quasisplit for all $\lambda_a = \lambda_a(\delta)$, and $\lambda_a' := \lambda_a - \rho(\mathfrak{u}(\lambda_a))$ is central in $L$.

Let $B' = T'A'N'$ be a minimal parabolic subgroup of $L$, whose Levi subgroup $H' = T'A'$ is a maximally split Cartan subgroup of
$L$. Assume the identity component of $T'$ is contained in the maximal torus $T$ of $K$, such that 
$\lambda_a'$ vanishes on the orthogonal complement of $\mathfrak{t}' \subseteq \mathfrak{t}$,
and hence can be seen as an element $\lambda_a' \in (\mathfrak{t}')^*$. 
%
\begin{theorem}[\cite{SRV98} Proposition 4.1(c)] \label{thm-quasisplit}
Retain the above notations, and let $Z_a \in \Pi_a^{\lambda_a'}(L)$. Suppose $\eta$ is a lowest $(L \cap K)$-type
of $Z_a$, and $\gamma \in \widehat{T'}$ be a factor in the restriction $\eta|_{T'}$, then
there is $\nu \in (\mathfrak{a}')^*$ such that $Z_a$ is a Langlands subquotient
of the principal series 
\begin{equation} \label{eq-ps}
Ind^{L}_{T'A'N'}(\gamma \boxtimes \nu \boxtimes 1).
\end{equation}
In particular, the infinitesimal character of $Z_a$ is equal to $(\lambda_a',\nu) \in (\mathfrak{t}')^* +(\mathfrak{a}')^*$.
\end{theorem}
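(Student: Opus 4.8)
The plan is to reduce the statement to the theory of minimal principal series of the quasisplit group $L$; the centrality of $\lambda_a'$ does all the work of forcing the inducing data onto the maximally split Cartan $H' = T'A'$. Since $L$ is quasisplit, its minimal parabolic $B' = T'A'N'$ has \emph{abelian} Levi factor $T'A'$. By Casselman's subrepresentation theorem, the irreducible $(\mathfrak{l}, L\cap K)$-module $Z_a$ embeds as a submodule of $\mathrm{Ind}^{L}_{B'}(\gamma_0 \boxtimes \nu_0 \boxtimes 1)$ for some $\gamma_0 \in \widehat{T'}$ and $\nu_0 \in (\mathfrak{a}')^*$; in particular $Z_a$ is a Langlands subquotient of it. (Alternatively one may start from the Langlands classification of $Z_a$ inside $L$ and use that $\lambda_a(\eta) = \lambda_a'$ being central in $\mathfrak{l}$ forces the Levi of the cuspidal parabolic to be abelian, hence that parabolic to be $B'$.)

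The next step is to locate $\gamma_0$ inside $\eta|_{T'}$ and then to pass to an \emph{arbitrary} factor $\gamma$ of $\eta|_{T'}$. By Frobenius reciprocity for $(\mathfrak{l}, L\cap K)$-modules, the $(L\cap K)$-types of $\mathrm{Ind}^{L}_{B'}(\gamma_0 \boxtimes \nu_0 \boxtimes 1)$ are exactly the $\tau \in \widehat{L\cap K}$ for which $\gamma_0$ is a constituent of $\tau|_{T'}$; since the lowest $(L\cap K)$-type $\eta$ occurs in $Z_a$, the character $\gamma_0$ is a factor of $\eta|_{T'}$. The hypothesis $\lambda_a(\eta) = \lambda_a'$ central in $\mathfrak{l}$ says precisely that $\mathfrak{q}(\lambda_a(\eta)) = \mathfrak{l}$, i.e. that $\eta$ is a \emph{fine} $(L\cap K)$-type; by Vogan's theory of fine $K$-types, $\eta|_{T'}$ is then multiplicity free, its constituents form a single orbit under the real Weyl group $W(L,A')$, and each has differential $\lambda_a'$. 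Hence, given any factor $\gamma$ of $\eta|_{T'}$, choose $w \in W(L,A')$ with $\gamma = w\gamma_0$; then $\mathrm{Ind}^{L}_{B'}(\gamma_0 \boxtimes \nu_0 \boxtimes 1)$ and $\mathrm{Ind}^{L}_{B'}(\gamma \boxtimes w\nu_0 \boxtimes 1)$ have the same composition factors, so with $\nu := w\nu_0 \in (\mathfrak{a}')^*$ the module $Z_a$ is a Langlands subquotient of $\mathrm{Ind}^{L}_{B'}(\gamma \boxtimes \nu \boxtimes 1)$, which is \eqref{eq-ps}.

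Finally I would read off the infinitesimal character. Because the inducing Levi $T'A'$ is abelian there is no $\rho$-shift coming from it, so the infinitesimal character of the principal series in \eqref{eq-ps} is represented on $\mathfrak{h}' = \mathfrak{t}' + \mathfrak{a}'$ by the pair $(d\gamma, \nu)$; and $d\gamma = \lambda_a'$ since $\gamma$ is a fine constituent of $\eta|_{T'}$ (equivalently, $\lambda_a'$ is central in $\mathfrak{l}$, so its restriction to $\mathfrak{t}'$ is exactly the central-character direction carried by $\gamma$). Therefore the infinitesimal character of $Z_a$ equals $(\lambda_a', \nu) \in (\mathfrak{t}')^* + (\mathfrak{a}')^*$, as claimed.

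Every ingredient here is standard except one: Casselman's subrepresentation theorem (or the Langlands classification), Frobenius reciprocity, the invariance of the composition series of a minimal principal series under $W(L,A')$, and the vanishing of the $\rho$-shift are all routine. \emph{The main obstacle is the fine-$K$-type input}: that the centrality of $\lambda_a(\eta)$ in $\mathfrak{l}$ forces $\eta|_{T'}$ to be multiplicity free with constituents forming a single $W(L,A')$-orbit, all of differential $\lambda_a'$ (and, in the Langlands-classification variant, that it forces the cuspidal parabolic down to $B'$). If one cites Vogan's classification of $(\mathfrak{g},K)$-modules with fine lowest $K$-type this is immediate; making the argument self-contained would mean redoing, for the quasisplit group $L$, the asymptotic analysis that underlies that classification.
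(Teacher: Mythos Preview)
The paper does not prove this theorem; it is stated with attribution to \cite[Proposition 4.1(c)]{SRV98} and no proof is given in the paper itself. So there is no ``paper's own proof'' to compare against beyond the citation.

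Your sketch is a faithful reconstruction of the standard argument behind that proposition: quasisplitness of $L$ forces the cuspidal parabolic to be the minimal one $B'=T'A'N'$, Casselman (or the Langlands classification applied inside $L$) places $Z_a$ inside a minimal principal series, Frobenius reciprocity identifies the inducing character with a constituent of $\eta|_{T'}$, and Vogan's fine $K$-type theory (the centrality of $\lambda_a'$ in $\mathfrak{l}$ making $\eta$ fine) lets you move to an arbitrary constituent $\gamma$ via the $W(L,A')$-action. The infinitesimal-character computation is then immediate. You have correctly flagged that the only nontrivial input is the fine $K$-type package, which in \cite{SRV98} is ultimately drawn from \cite{V82}.

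One small point worth tightening: you pass from ``$Z_a$ embeds as a submodule'' to ``$Z_a$ is a Langlands subquotient'' without comment. This is fine because $\eta$ is fine and hence occurs with multiplicity one in the principal series, so the subquotient containing $\eta$ is unique and equals $Z_a$; but it is worth saying explicitly, since ``Langlands subquotient'' in this paper's usage means the unique irreducible subquotient with the prescribed lowest $K$-type, not merely any composition factor.
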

%
%
%
To conclude, we have the following algorithm constructing all irreducible $(\mathfrak{g},K)$-modules.
\begin{algorithm} \label{alg-langlands}
For any $\delta \in \widehat{K}$, apply the following algorithm:
\begin{itemize}
\item[(i)] Compute $\lambda_a := \lambda_a(\delta)$, and obtain the $\theta$-stable parabolic subalgebra $\mathfrak{q}(\lambda_a) = \mathfrak{g}(\lambda_a) + \mathfrak{u}(\lambda_a)$;
\item[(ii)] Let $L := N_G(\mathfrak{q}(\lambda_a))$ be a quasisplit Levi subgroup of $G$, and $H'= T'A'$ be a maximally split Cartan subgroup of $L$;
\item[(iii)] Let $\eta \in \widehat{L \cap K}$ has highest weight $\mu(\delta) - 2\rho(\mathfrak{u}(\lambda_a) \cap \mathfrak{p})$, so that $\mathcal{L}_{\mathfrak{q}(\lambda_a),S}^K(\eta) = \delta$;
\item[(iv)] Take {\bf any} $\gamma \in \widehat{T'}$ such that $[\gamma: \eta|_{T'}] > 0$; and {\bf any} $\nu \in (\mathfrak{a}')^*$.
\end{itemize}
Then there is a unique irreducible subquotient $X$ of 
\begin{equation} \label{eq-langlands}
\mathcal{L}_{\mathfrak{q},S}\left(Ind_{T'A'N'}^{L}(\gamma \boxtimes \nu \boxtimes 1)\right)
\end{equation}
with infinitesimal character $\Lambda := (\lambda_a, \nu)$ such that $X$ contains $\delta$ as a lowest $K$-type. 
Moreover, all irreducible modules with lowest $K$-type $\delta$ are obtained in such a way.
\end{algorithm}
The main theme in \cite[Chapter 4]{V82} is to understand the structure theory of the principal series \eqref{eq-ps} appearing in \eqref{eq-langlands}
for various choices of $\gamma$ and $\nu$. For instance, if $\delta' \neq \delta \in \widehat{K}$ satisfies $\lambda_a(\delta') = \lambda_a(\delta)$, 
the above algorithm may yield the same $\gamma$ (see Example \ref{eg-lkts} below). The notion of {\bf $R$-groups} is to determine whether 
$\delta$ and $\delta'$ belong to different irreducible subquotients of \eqref{eq-langlands}
for different values of $\nu$. We will study them in full detail for $G = U(p,q)$.

\subsection{Hermitian forms and unitarity}
It is well known that the classification of $\widehat{G}$ is equivalent to the classification of irreducible $(\mathfrak{g}, K)$-modules admitting positive definite $\mathfrak{g}$-invariant Hermitian forms. 
The following result determines a necessary and sufficient condition for an irreducible representation having an invariant, nondegenerate Hermitian form:
\begin{theorem}[\cite{KZ76}] \label{thm-herm}
Let $X$ be an irreducible admissible $(\mathfrak{g},K)$-module, which is a subquotient of the induced module \eqref{eq-langlands} via the construction in Algorithm \ref{alg-langlands}. In particular, $X$ has a lowest $K$-type $\delta$ and infinitesimal character
$\Lambda = (\lambda_a(\delta),\nu)$. Then $X$ has an invariant nondegenerate Hermitian form if and only if there exists $w \in W(G,H')$ such that  
$$w(\lambda_a(\delta),\nu) = (\lambda_a(\delta),-\overline{\nu}).$$
Moreover, all such invariant forms on $X$ are unique up to a non-zero scalar.
\end{theorem}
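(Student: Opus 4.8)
The plan is to reduce the statement to the identification of $X$ with its Hermitian dual $X^h$, compute the Langlands parameter of $X^h$ from the construction in Algorithm~\ref{alg-langlands}, and then extract the $W(G,H')$-conjugacy condition from the Langlands classification. For the reduction: for an irreducible admissible $(\mathfrak{g},K)$-module $X$, a nonzero invariant sesquilinear form on $X$ is the same datum as a nonzero $(\mathfrak{g},K)$-map $X \to X^h$, where $X^h$ denotes the Hermitian dual. The radical of any such form is a proper submodule of the irreducible $X$, hence the form is nondegenerate; and since $X^h$ is again irreducible, Schur's lemma gives $\dim \mathrm{Hom}_{(\mathfrak{g},K)}(X,X^h) \le 1$, which yields the ``moreover'' clause and reduces the existence of an invariant nondegenerate Hermitian form to the condition $X \cong X^h$. (Comparing a nondegenerate invariant sesquilinear form with its conjugate-transpose and rescaling shows it may always be chosen Hermitian, so the Hermitian hypothesis is free.)

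Next I would pin down the parameter of $X^h$. By Algorithm~\ref{alg-langlands}, $X$ is the unique irreducible subquotient of $\mathcal{L}_{\mathfrak{q}(\lambda_a),S}\big(\mathrm{Ind}_{T'A'N'}^{L}(\gamma\boxtimes\nu\boxtimes1)\big)$ containing $\delta$ as a lowest $K$-type, with $\lambda_a=\lambda_a(\delta)$. Hermitian duality is compatible with both stages of this construction: since $\mathfrak{q}(\lambda_a)$ is $\theta$-stable one has $\overline{\mathfrak{q}(\lambda_a)}=\mathfrak{q}(\lambda_a)$ and $\mathcal{L}_{\mathfrak{q}(\lambda_a),S}(Z)^h\cong\mathcal{L}_{\mathfrak{q}(\lambda_a),S}(Z^h)$ --- the Hermitian-dual analogue of the duality theorem for cohomological induction in \cite{KV95} --- while on the parabolic side $\gamma$ is a unitary character of the compact group $T'$ so $\gamma^h\cong\gamma$, the Hermitian dual of the character $\nu$ of $A'$ is $-\overline{\nu}$, and passing to the opposite minimal parabolic does not change infinitesimal characters. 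Hence $X^h$ again lies in $\Pi_a^{\lambda_a}(G)$, has lowest $K$-type $\delta$ (note $\delta^h\cong\delta$ since $K$ is compact), and has infinitesimal character $(\lambda_a(\delta),-\overline{\nu})$.

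Finally I would invoke the Langlands classification to compare $X$ and $X^h$. Both are irreducible modules with lowest $K$-type $\delta$; by Theorem~\ref{thm-lambdaa}(a) this comparison transfers faithfully to the quasisplit group $L$, where, by Theorem~\ref{thm-quasisplit}, it becomes the question of whether the Langlands subquotients of $\mathrm{Ind}_{T'A'N'}^{L}(\gamma\boxtimes\nu\boxtimes1)$ and $\mathrm{Ind}_{T'A'N'}^{L}(\gamma\boxtimes(-\overline{\nu})\boxtimes1)$ distinguished by the common lowest $(L\cap K)$-type $\eta$ coincide. The structure theory of minimal principal series of $L$ --- disjointness of the Langlands subquotients, together with the $R$-group description of which $T'$-constituents of $\eta$ occur in which subquotient, as in \cite[Chapter~4]{V82} --- shows that two irreducible modules with the same lowest $K$-type are isomorphic precisely when their parameters are conjugate by an element of $W(G,H')$ fixing $\gamma$, equivalently fixing $\lambda_a(\delta)$. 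Applying this to $(\lambda_a(\delta),\nu)$ and $(\lambda_a(\delta),-\overline{\nu})$ gives the assertion.

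The main obstacle is this last step: identifying exactly which Weyl group governs the isomorphism class of $X$ among modules with lowest $K$-type $\delta$. Mere $W(\mathfrak{g},\mathfrak{h}')$-conjugacy of infinitesimal characters is necessary but strictly weaker; it is the lowest-$K$-type bookkeeping --- the requirement that the conjugating element fix $\gamma$ (hence $\lambda_a(\delta)$), the $R$-group fact that distinct Langlands subquotients carry distinct lowest $K$-types, and the insensitivity of $\mathcal{L}_{\mathfrak{q}(\lambda_a),S}$ to $W(G,H')$-conjugation of the $\theta$-stable datum --- that cuts the relevant group down to exactly $W(G,H')$. Checking that the $\theta$-stable induction step neither creates nor destroys self-duality is the other point requiring care, but it follows from the bijectivity in Theorem~\ref{thm-lambdaa}(a) once the compatibility of Hermitian duality with $\mathcal{L}_{\mathfrak{q}(\lambda_a),S}$ is in place.
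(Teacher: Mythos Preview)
The paper does not prove this theorem: it is quoted from \cite{KZ76} and used as a black box, so there is no in-paper argument to compare against. Your sketch is essentially the standard route one would take to establish the Knapp--Zuckerman criterion, rephrased in the $\theta$-stable language of Algorithm~\ref{alg-langlands}.

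Your reduction to $X\cong X^h$ and the uniqueness clause via Schur are clean and correct. The computation of the parameter of $X^h$ is right in outline; the compatibility $\mathcal{L}_{\mathfrak{q}(\lambda_a),S}(Z)^h\cong\mathcal{L}_{\mathfrak{q}(\lambda_a),S}(Z^h)$ you invoke is the Hermitian-dual form of the duality theorem in \cite{KV95} and does hold here, but note it is not a triviality---one needs the positivity built into $\mathfrak{q}(\lambda_a)$ to know the cohomological induction is concentrated in degree $S$ and that no degree shift intervenes.

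The place where your proposal is a sketch rather than a proof is exactly the one you flag: showing that $X\cong X^h$ forces the parameters to be conjugate by the \emph{real} Weyl group $W(G,H')$ and not merely by $W(\mathfrak{g},\mathfrak{h}')$. The forward direction (a $W(G,H')$-conjugacy gives an isomorphism of the induced modules, hence of the $\delta$-subquotients) is easy. For the converse you appeal to ``the structure theory of minimal principal series of $L$'' and the $R$-group, but this is precisely the content of the Knapp--Zuckerman theorem you are trying to prove: one must show that an isomorphism of Langlands subquotients forces the data to be $W(L,H')$-conjugate, and this uses the analysis of intertwining operators and $R$-groups in \cite{KZ76} or \cite[Chapter~4]{V82}. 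So your argument is circular at this point unless you import that analysis wholesale. A complete proof would either reproduce the intertwining-operator argument or, more honestly, cite the Langlands disjointness theorem in the form ``$J(P,\sigma,\nu)\cong J(P,\sigma',\nu')$ iff $(\sigma,\nu)$ and $(\sigma',\nu')$ are $W(G,H')$-conjugate'' as an input.
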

\begin{corollary}
The bijection map \eqref{eq-lambdaa} maps Hermitian modules in $\Pi_a^{\lambda_a - \rho(\mathfrak{u}(\lambda_a))}(G(\lambda_a))$ bijectively onto Hermitian modules in $\Pi_a^{\lambda_a}(G)$.
\end{corollary}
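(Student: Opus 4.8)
The plan is to deduce the corollary directly from the Hermitian form criterion in Theorem \ref{thm-herm} together with the parametrization in Theorem \ref{thm-lambdaa}. Write $L := G(\lambda_a)$. By Theorem \ref{thm-lambdaa}(a), the bijection \eqref{eq-lambdaa} sends $Z_a \in \Pi_a^{\lambda_a - \rho(\mathfrak{u}(\lambda_a))}(L)$ to the unique irreducible subquotient $X$ of $\mathcal{L}_{\mathfrak{q}(\lambda_a),S}(Z_a)$. Since $\lambda_a' = \lambda_a - \rho(\mathfrak{u}(\lambda_a))$ is central in $L$, a lowest $(L\cap K)$-type $\eta$ of $Z_a$ gives, via Theorem \ref{thm-quasisplit}, a character $\gamma \in \widehat{T'}$ and an element $\nu \in (\mathfrak{a}')^*$ so that $Z_a$ is the Langlands subquotient of $Ind^L_{T'A'N'}(\gamma\boxtimes\nu\boxtimes 1)$, with infinitesimal character $(\lambda_a',\nu)$. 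The same data $(\gamma,\nu)$ realizes $X$ as the Langlands subquotient of \eqref{eq-langlands}, with infinitesimal character $(\lambda_a,\nu)$. So both $Z_a$ and its image $X$ are attached to the \emph{same} pair $(\gamma,\nu)$ on (conjugate copies of) the same maximally split Cartan subgroup $H' = T'A'$.

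Next I would observe that the relevant Weyl group is unchanged: $H'$ is a maximally split Cartan subgroup of $L$, and since $\mathfrak{q}(\lambda_a)$ is $\theta$-stable and $L = N_G(\mathfrak{q}(\lambda_a))$ is quasisplit, $H'$ is also a (non-maximally split, in general) Cartan subgroup of $G$, and in fact $W(L,H') = W(G,H')$ because the extra roots of $\mathfrak{g}$ relative to $\mathfrak{l}$ are exactly those in $\mathfrak{u}(\lambda_a)\oplus\overline{\mathfrak{u}(\lambda_a)}$, none of which are real with respect to $H'$ — they all pair nontrivially with the central (compact-direction) element $\lambda_a'$, hence do not contribute reflections fixing the $\mathfrak{a}'$-part. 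Consequently the condition of Theorem \ref{thm-herm} for $X$, namely $\exists\, w \in W(G,H')$ with $w(\lambda_a,\nu) = (\lambda_a,-\overline\nu)$, holds if and only if $\exists\, w \in W(L,H')$ with $w(\lambda_a',\nu) = (\lambda_a',-\overline\nu)$, which is exactly the Hermiticity criterion for $Z_a$: any such $w$ fixes the $\mathfrak{t}'$-parts $\lambda_a$ resp.\ $\lambda_a'$ (the reflections involved lie in $W(L(\lambda_a'),H')$ and act only on the $\mathfrak{a}'$-direction) and sends $\nu$ to $-\overline\nu$. Therefore $X$ is Hermitian if and only if $Z_a$ is, and by uniqueness of the invariant form (last sentence of Theorem \ref{thm-herm}) the correspondence $Z_a \leftrightarrow X$ restricts to a bijection between the Hermitian modules on each side.

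The main obstacle is the bookkeeping in the second paragraph: one must check carefully that the Weyl group $W(G,H')$ that enters Theorem \ref{thm-herm} for $X$ genuinely coincides with $W(L,H')$ — equivalently, that passing from $L$ up to $G$ by cohomological induction along a $\theta$-stable $\mathfrak{q}$ introduces no new Weyl-group elements capable of moving the fixed part $\lambda_a$ while conjugating $\nu$ to $-\overline\nu$. This is precisely the content of \cite[Proposition 4.1]{SRV98} (quasisplitness of $L$ and centrality of $\lambda_a'$) combined with the standard fact that cohomological induction in the good-to-weakly-fair range preserves infinitesimal character by adding $\rho(\mathfrak{u})$, and I would cite those rather than re-derive them. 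Once the Weyl-group identification is in place, the rest is a direct translation of Theorem \ref{thm-herm} through the bijection \eqref{eq-lambdaa}.
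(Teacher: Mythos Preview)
Your approach is close to the paper's, but there is a genuine gap: the assertion $W(G,H') = W(L,H')$ is false in general. Your justification---that the roots in $\mathfrak{u}(\lambda_a)\oplus\overline{\mathfrak{u}(\lambda_a)}$ are never real with respect to $H'$---is correct, but the real Weyl group $W(G,H')$ is \emph{not} generated by reflections in real roots alone; it also contains elements coming from $N_K(\mathfrak{h}')$ that permute complex roots in $\theta$-stable pairs. For a concrete counterexample, take $G = U(2,2)$ and $\lambda_a$ with $L = G(\lambda_a) = U(1,1)\times U(1,1)$: then $H'$ is simultaneously the maximally split Cartan of $L$ and of $G$, and $W(L,H')\cong(\mathbb{Z}/2)^2$ has order $4$, whereas $W(G,H')$ is the type $C_2$ Weyl group of order $8$, the extra elements permuting the two $GL(1,\mathbb{C})$ factors of $H'$.

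The fix is already implicit in your parenthetical remark, and is exactly what the paper does: one does \emph{not} need the two Weyl groups to coincide. Since every element of $W(G,H')$ can be represented by an element of $K$, it commutes with $\theta$ and therefore preserves the decomposition $(\mathfrak{h}')^* = (\mathfrak{t}')^*\oplus(\mathfrak{a}')^*$. Hence any $w\in W(G,H')$ with $w(\lambda_a,\nu)=(\lambda_a,-\overline\nu)$ automatically satisfies $w\lambda_a=\lambda_a$, which forces $w\in W(L,H')$ because $L=G(\lambda_a)$ is the centralizer of $\lambda_a$. The same $w$ then witnesses Hermiticity of $Z_a$, since $\lambda_a - \lambda_a' = \rho(\mathfrak{u}(\lambda_a))$ is central in $L$ and hence $W(L,H')$-fixed. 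For the forward direction (Hermitian $Z_a\Rightarrow$ Hermitian $X$) the paper simply cites \cite[Theorem 2.13]{SRV98}, though the Weyl-group argument runs equally well in reverse via the inclusion $W(L,H')\subset W(G,H')$. So your overall strategy is right; you just need to replace the false equality of Weyl groups with the stabilizer argument you already gestured at.
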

\begin{proof}
Let $\lambda_a' = \lambda_a - \rho(\mathfrak{u}(\lambda_a))$ and $L = G(\lambda_a)$ as before.
By \cite[Theorem 2.13]{SRV98}, the map \eqref{eq-lambdaa} maps Hermitian modules injectively to Hermitian modules. As for surjectiveness,
note that if $X$ is a Hermitian $(\mathfrak{g},K)$-module, then the above theorem says $w\lambda_a = \lambda_a$, i.e. $w \in W(L,H')$.

Now consider the $(\mathfrak{l}, L \cap K)$-module $Z_a$ which is the preimage of $X$ in \eqref{eq-lambdaa}. It has infinitesimal character $(\lambda_a', \nu) \in (\mathfrak{t'})^* + (\mathfrak{a'})^*$. So the same $w \in W(L,H')$ satisfies $w(\lambda_a',\nu) = (\lambda_a',-\overline{\nu})$, and hence $Z_a$ itself is also Hermitian.
\end{proof}

Recall that $\Lambda = (\lambda_a(\delta),\nu)$ is the infinitesimal character of $X$. By standard `reduction to real infinitesimal characters' argument (e.g. Theorem 2.6 of \cite{B04}), one can assume that $Im(\nu) = 0$, i.e. $\nu \in \mathfrak{a}_0^*$. We will assume $\Lambda$ has real infinitesimal character for the rest of this manuscript.

\begin{definition} 
Let $X$ be irreducible $(\mathfrak{g},K)$-module with an invariant Hermitian form $\langle\ ,\ \rangle_G$. The {\bf signature} 
$$(p^X(\delta),q^X(\delta))$$
of a $K$-type $\delta$ in $X$ is the signature of the induced Hermitian form
of the $m^X(\delta)$-dimensional vector space $Hom_K(\delta, X)$. In particular, 
$m^X(\delta) = p^X(\delta) + q^X(\delta)$ for all $\delta \in \widehat{K}$.
\end{definition}

By the last statement of Theorem \ref{thm-herm}, in order to check whether $X$ is unitary, 
it suffices to study \emph{one} Hermitian form of $X$,
and determine whether $p^X$, $q^X: \widehat{K} \to \mathbb{N}$ are zero functions or not.

\smallskip
Here is a refined statement for Theorem \ref{thm-lambdaa}(b):
\begin{theorem}[\cite{SRV98}, Proposition 2.10] \label{thm-bottomlayer}
Recall the bijection $Z_a \mapsto X$ in Theorem \ref{thm-lambdaa}. Suppose $Z_a$ has
a Hermitian form $\langle\ ,\ \rangle_L$, and $\langle\ ,\ \rangle_G$
is the Hermitian form of $X$ inherited from that of $\mathcal{L}_{\mathfrak{q}(\lambda_a),S}(Z_a)$ (c.f. \cite[Theorem 2.11]{SRV98}). 
Then the signatures of $\mathfrak{q}(\lambda_a)$-bottom layer $K$-types are preserved. More explicitly, one has
$$p^{Z_a}(\beta) = p^X(\mathcal{L}_{\mathfrak{q}(\lambda_a),S}^K(\beta)), \quad \quad q^{Z_a}(\beta) = q^X(\mathcal{L}_{\mathfrak{q}(\lambda_a),S}^K(\beta)).$$
\end{theorem}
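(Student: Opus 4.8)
The plan is to reduce the claim to the isomorphism of multiplicity spaces already established in Theorem~\ref{thm-lambdaa}(b), and then to carry the Hermitian forms through that isomorphism. First I would recall the \emph{bottom-layer map} of \cite{KV95}: for a $(G(\lambda_a)\cap K)$-type $\beta$ with $\mu(\beta)+2\rho(\mathfrak{u}(\lambda_a)\cap\mathfrak{p})$ dominant, it provides a natural linear isomorphism
\[
\mathcal{B}_\beta\colon \operatorname{Hom}_{G(\lambda_a)\cap K}\!\big(\beta,\,Z_a\big)\;\xrightarrow{\ \sim\ }\;\operatorname{Hom}_K\!\big(\mathcal{L}_{\mathfrak{q}(\lambda_a),S}^K(\beta),\,\mathcal{L}_{\mathfrak{q}(\lambda_a),S}(Z_a)\big),
\]
whose image, by Theorem~\ref{thm-lambdaa}(b), lies inside $\operatorname{Hom}_K(\mathcal{L}_{\mathfrak{q}(\lambda_a),S}^K(\beta),X)$ since these $K$-types occur in no other subquotient of $\mathcal{L}_{\mathfrak{q}(\lambda_a),S}(Z_a)$. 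The left-hand space carries the Hermitian form induced by $\langle\ ,\ \rangle_L$ and the right-hand space the form induced by $\langle\ ,\ \rangle_G$; so it suffices to prove that $\mathcal{B}_\beta$ sends the first to a \emph{positive} real multiple of the second, which then forces $p^{Z_a}(\beta)=p^X(\mathcal{L}_{\mathfrak{q}(\lambda_a),S}^K(\beta))$ and $q^{Z_a}(\beta)=q^X(\mathcal{L}_{\mathfrak{q}(\lambda_a),S}^K(\beta))$.

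Next I would unwind the construction of $\langle\ ,\ \rangle_G$ from \cite[Theorem 2.11]{SRV98}: because $\mathfrak{q}(\lambda_a)$ is $\theta$-stable, the duality theorems for cohomological induction (see \cite{KV95}) produce, from the isomorphism $Z_a^h\cong Z_a$ furnished by $\langle\ ,\ \rangle_L$, a canonical invariant form on $\mathcal{L}_{\mathfrak{q}(\lambda_a),S}(Z_a)$ which descends to $X$. The substantive point to verify is that this construction intertwines the bottom-layer maps on the two sides. By definition the bottom layer of $\mathcal{L}_{\mathfrak{q}(\lambda_a),S}(Z_a)$ is built from $Z_a\otimes\bigwedge^{S}(\mathfrak{u}(\lambda_a)\cap\mathfrak{p})$ through the top-degree term of the relevant $(\mathfrak{u}(\lambda_a)\cap\mathfrak{p})$-Koszul complex, so the duality pairing restricted to the bottom layer should equal the pairing on $Z_a$ coming from $\langle\ ,\ \rangle_L$, tensored with the canonical pairing on the one-dimensional space $\bigwedge^{S}(\mathfrak{u}(\lambda_a)\cap\mathfrak{p})$ and with a positive-definite contribution from the compact directions $\mathfrak{u}(\lambda_a)\cap\mathfrak{k}$.

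Finally I would pin down the sign. The degree $S=\dim(\mathfrak{u}(\lambda_a)\cap\mathfrak{p})$ is chosen precisely so that the $\mathfrak{p}$-part of the induction sits in top degree; hence $\bigwedge^{S}(\mathfrak{u}(\lambda_a)\cap\mathfrak{p})$ is one-dimensional and its contribution to the form is sign-definite, and with the normalization fixed in \cite[Theorem 2.11]{SRV98} this sign is $+1$. Combining this with the positive-definite compact contribution shows that $\mathcal{B}_\beta$ rescales the induced form by a positive scalar, which proves the proposition.

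I expect the main obstacle to be exactly the bookkeeping in the middle two steps: transporting the Hermitian pairing faithfully through the duality isomorphisms and the bottom-layer map, and certifying that the form induced on the bottom layer is a \emph{positive} rather than merely nonzero multiple of $\langle\ ,\ \rangle_L$. Everything else is a formal consequence of the bottom-layer formalism already invoked in the statement of Theorem~\ref{thm-lambdaa}.
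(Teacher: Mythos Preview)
The paper does not give its own proof of this statement: Theorem~\ref{thm-bottomlayer} is quoted directly from \cite[Proposition~2.10]{SRV98} (with the Hermitian form coming from \cite[Theorem~2.11]{SRV98}), and the paper immediately moves on to Remark~\ref{rmk-bottomlayer} without supplying any argument. So there is no proof in the paper to compare your proposal against.

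That said, your outline is the correct strategy and is essentially the one carried out in the original references (\cite{SRV98} and, in more detail, \cite[Chapter~VI]{KV95}): one uses the bottom-layer map to identify the multiplicity spaces, transports the Shapovalov-type pairing through the duality isomorphism for cohomological induction, and checks that on the bottom layer the induced form agrees with the form on $Z_a$ up to a positive scalar coming from the one-dimensional top exterior power $\bigwedge^{S}(\mathfrak{u}(\lambda_a)\cap\mathfrak{p})$. Your identification of the delicate point---that the scalar is genuinely positive rather than merely nonzero---is exactly right; this is the ``signature theorem'' part of the argument in \cite{KV95}, and it requires the specific normalization of the form on $\mathcal{L}_{\mathfrak{q},S}(Z_a)$ fixed in \cite[Theorem~2.11]{SRV98}. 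If you want to turn your sketch into a self-contained proof, the place to look for the precise sign computation is \cite[Theorem~6.34]{KV95}, which the paper in fact cites in Remark~\ref{rmk-bottomlayer} as the general form of this result.
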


\begin{remark} \label{rmk-bottomlayer}
    Indeed the above bottom layer $K$-type theorem works for any $\theta$-stable parabolic subalgebras $\mathfrak{q}' = \mathfrak{l}'+\mathfrak{u}'$. Namely, let $Z$ be a Hermitian $(\mathfrak{l}',L'\cap K)$-module such that $X$ is a lowest $K$-type subquotient of the cohomologically induced module $\mathcal{L}_{\mathfrak{q}',S}(Z)$, 
    and $\beta'$ be a $L' \cap K$-type in $Z$ such that $\mu(\beta')+2\rho(\mathfrak{u}'\cap \mathfrak{p})$ is $K$-dominant.
    Then the above theorem also holds upon replacing $Z_a$, $\mathfrak{q}(\lambda_a)$ and $\beta$ with $Z$, $\mathfrak{q}'$ and $\beta'$ respectively (c.f. \cite[Theorem 6.34]{KV95}). 
\end{remark}

We end this subsection by the following definition, which will be used in our refined conjecture of
Salamanca-Riba and Vogan:
\begin{definition} \label{def-upto}
Let $X$ be an irreducible, Hermitian $(\mathfrak{g},K)$-module with lowest $K$-types $\{\delta_1, \dots, \delta_r\}$. Suppose
$\{\chi_1, \dots, \chi_u\}$ be the set of $K$-types appearing in the tensor products $\delta_i \otimes \mathfrak{p}$. 
We say $X$ is {\bf non-unitary up to level $\mathfrak{p}$} if
$$\left(\sum_i p^X(\delta_i) + \sum_j p^X(\chi_j)\right) \cdot \left(\sum_i q^X(\delta_i) + \sum_j q^X(\chi_j)\right) \neq 0$$
In other words, the Hermitian form has indefinite signatures on the $K$-types 
$$\{\delta_1, \dots, \delta_r, \chi_1, \dots, \chi_u\}.$$ 

Suppose furthermore that $\mathfrak{g} = \mathfrak{k} + \mathfrak{p}^+ + \mathfrak{p^-}$ is Hermitian symmetric, 
and $\{\chi_1^{+}, \dots, \chi_s^{+}\}$, $\{\chi_1^{-}, \dots, \chi_t^{-}\}$ be the set of $K$-types
appearing in the tensor products $\delta_i \otimes \mathfrak{p}^{+}$ and $\delta_i \otimes \mathfrak{p}^{-}$ respectively. We say $X$ is {\bf non-unitary up to level $\mathfrak{p}^{\pm}$} if
$$\left(\sum_i p^X(\delta_i) + \sum_j p^X(\chi_j^{\pm})\right) \cdot \left(\sum_i q^X(\delta_i) + \sum_j q^X(\chi_j^{\pm})\right) \neq 0.$$
\end{definition}

\subsection{Salamanca-Riba and Vogan's conjecture} \label{sec-SV}
In the Langlands classification using $\theta$-stable data, one would hope that the map \eqref{eq-lambdaa}
in Theorem \ref{thm-lambdaa}(a) is also a bijection upon restricting to unitary representation. Unfortunately, there are simple examples that it does not (see the paragragh after Theorem 2.11 of \cite{SRV98}). To remedy the problem, Salamanca-Riba and Vogan considered `enlarging' the theta-stable Levi subgroup or, equivalently, projecting more $\mu$ to $0$. 

\begin{definition}
For all $\mu \in \Delta^+(\mathfrak{k},\mathfrak{t})$, define $\lambda_u(\mu) := P(\mu + 2\rho(\mathfrak{k}) - 2\rho(\mathfrak{g}))$. 
\end{definition}

As in Theorem \ref{thm-lambdaa}, one also has
\begin{theorem}[\cite{SRV98} Section 3]
Let 
$$\Pi_h^{\lambda_u}(G) := \Big\{\begin{matrix} \pi\ \text{adm. irred. Hermitian}\\ 
(\mathfrak{g},K)\text{-module}\end{matrix}\ \Big|\ 
\begin{matrix}\text{a lowest K-type}\ \delta\ \text{of}\ \pi\\ 
\text{satisfies}\ \lambda_u(\delta) = \lambda_u \end{matrix}\Big\}.$$ 
Then there is a bijection 
$$\Pi_h^{\lambda_u}(G(\lambda_u)) \longrightarrow \Pi_h^{\lambda_u}(G).$$
\end{theorem}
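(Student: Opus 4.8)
The plan is to reduce the $\lambda_u$-statement to the $\lambda_a$-statement already recorded as Theorem \ref{thm-lambdaa}. The key observation is that $\lambda_u(\mu)$ differs from $\lambda_a(\mu)$ only in that more coordinates of $\mu + 2\rho(\mathfrak{k}) - \rho(\mathfrak{g})$ get projected to zero: indeed $\lambda_u(\mu) = P(\mu + 2\rho(\mathfrak{k}) - 2\rho(\mathfrak{g}))$ while $\lambda_a(\mu) = P(\mu + 2\rho(\mathfrak{k}) - \rho(\mathfrak{g}))$, so passing from $\lambda_a$ to $\lambda_u$ amounts to subtracting an additional $\rho(\mathfrak{g})$ before projecting. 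Concretely, one shows that $\mathfrak{q}(\lambda_u)$ is a $\theta$-stable parabolic whose Levi $G(\lambda_u)$ contains the Levi $G(\lambda_a)$ attached to any $\delta$ with $\lambda_a(\delta) = \lambda_a$ and $\lambda_u(\delta) = \lambda_u$; this is essentially \cite[Proposition 4.1]{SRV98} or the discussion preceding it. So the first step is to set up the two-step cohomological induction $\mathcal{L}_{\mathfrak{q}(\lambda_u)} = \mathcal{L}_{\mathfrak{q}(\lambda_u) \text{ in } G} \circ \mathcal{L}_{\mathfrak{q}(\lambda_a) \cap \mathfrak{g}(\lambda_u)}$ and verify the weight bookkeeping, so that $\lambda_u - \rho(\mathfrak{u}(\lambda_u))$ plays for $G(\lambda_u)$ the role that $\lambda_a$ plays for $G$.

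The second step is the bijection on Hermitian modules. Applying Theorem \ref{thm-lambdaa}(a) with $\lambda_a$ replaced by $\lambda_u$ already gives a bijection $\Pi_a^{\lambda_u - \rho(\mathfrak{u}(\lambda_u))}(G(\lambda_u)) \to \Pi_a^{\lambda_u}(G)$ between \emph{all} admissible irreducibles; the content here is to restrict it to the Hermitian ones. But this is precisely the Corollary proved just above in the excerpt: Theorem \ref{thm-herm} says $X$ is Hermitian iff there is $w \in W(G,H')$ with $w(\lambda_u(\delta),\nu) = (\lambda_u(\delta), -\overline{\nu})$, and since $w$ must fix $\lambda_u(\delta)$ it lies in $W(G(\lambda_u), H')$; hence the preimage $Z_u$, which has infinitesimal character $(\lambda_u - \rho(\mathfrak{u}(\lambda_u)), \nu)$, is carried to its Hermitian dual by the same $w$, so $Z_u$ is Hermitian. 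Running the argument in reverse (using \cite[Theorem 2.13]{SRV98} for injectivity of the Hermitian-preserving direction, as in the Corollary's proof) gives the claimed bijection $\Pi_h^{\lambda_u}(G(\lambda_u)) \to \Pi_h^{\lambda_u}(G)$.

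The only genuinely delicate point is the first step: one must check that $\lambda_u - \rho(\mathfrak{u}(\lambda_u))$ is central in $G(\lambda_u)$ and that, as $\delta$ ranges over $K$-types with the prescribed $\lambda_u(\delta) = \lambda_u$, the corresponding $G(\lambda_u) \cap K$-types $\eta$ (with highest weight $\mu(\delta) - 2\rho(\mathfrak{u}(\lambda_u) \cap \mathfrak{p})$) indeed satisfy $\lambda_a^{G(\lambda_u)}(\eta) = \lambda_u - \rho(\mathfrak{u}(\lambda_u))$, so that Theorem \ref{thm-lambdaa} applies verbatim inside $G(\lambda_u)$. This is exactly the structural input isolated in \cite[Section 3]{SRV98}, and I expect it to be the main obstacle only in the sense of careful $\rho$-shift accounting rather than any conceptual difficulty; once it is in place, the Hermitian refinement is formal from Theorem \ref{thm-herm} and the Corollary.
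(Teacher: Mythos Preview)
The paper does not prove this theorem; it is merely cited from \cite[Section 3]{SRV98}. So there is no ``paper's own proof'' to compare against---you are supplying an argument where the paper supplies none.

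Your overall strategy (cohomological induction from $\mathfrak{q}(\lambda_u)$ gives the bijection on admissible irreducibles, and the Hermitian restriction follows from Theorem~\ref{thm-herm} exactly as in the Corollary) is the right one. But there is a genuine error in your bookkeeping: you introduce a shift $\lambda_u - \rho(\mathfrak{u}(\lambda_u))$ on the Levi side, copying the $\lambda_a$-shift from Theorem~\ref{thm-lambdaa}. No such shift occurs for $\lambda_u$. With $\mu(\eta) = \mu(\delta) - 2\rho(\mathfrak{u}\cap\mathfrak{p})$ and $\rho_c = \rho_c^L + \rho(\mathfrak{u}\cap\mathfrak{k})$, $\rho = \rho^L + \rho(\mathfrak{u})$, one computes directly
\[
\mu(\eta) + 2\rho_c^L - 2\rho^L \;=\; \mu(\delta) + 2\rho_c - 2\rho,
\]
so the pre-projection quantity is literally unchanged and $\lambda_u^{G(\lambda_u)}(\eta) = \lambda_u$. (The analogous computation for $\lambda_a$, with a single $\rho$ rather than $2\rho$, leaves an uncancelled $-\rho(\mathfrak{u})$---that is where the shift in Theorem~\ref{thm-lambdaa} comes from.) This is precisely why the theorem is stated with the \emph{same} superscript $\lambda_u$ on both sides, unlike Theorem~\ref{thm-lambdaa}. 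Your sentence ``Applying Theorem~\ref{thm-lambdaa}(a) with $\lambda_a$ replaced by $\lambda_u$'' is therefore not valid: Theorem~\ref{thm-lambdaa} is not a template into which one substitutes arbitrary weights, and the correct shift differs.

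A second, smaller confusion: in your last paragraph you write that one must check $\lambda_a^{G(\lambda_u)}(\eta) = \lambda_u - \rho(\mathfrak{u}(\lambda_u))$ ``so that Theorem~\ref{thm-lambdaa} applies verbatim inside $G(\lambda_u)$.'' This mixes $\lambda_a$ and $\lambda_u$ and misidentifies the goal. The $\lambda_u$-bijection is a single cohomological-induction step from $\mathfrak{q}(\lambda_u)$, parallel to (not deduced from) the $\lambda_a$-bijection; what one must verify is $\lambda_u^{G(\lambda_u)}(\eta) = \lambda_u$, which the computation above does. Once that is in place, your Hermitian argument via Theorem~\ref{thm-herm} goes through unchanged.
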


The reason behind the introduction of $\lambda_u$ is as follows:
\begin{conjecture} \label{conj-lambdaunitary}
The above bijection preserves unitarity. 
\end{conjecture}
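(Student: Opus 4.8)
The plan is to make the bijection explicit and then treat the two implications separately. Write $L := G(\lambda_u)$, let $\mathfrak{q}(\lambda_u) = \mathfrak{l} + \mathfrak{u}(\lambda_u)$ be the associated $\theta$-stable parabolic, and $S := \dim(\mathfrak{u}(\lambda_u)\cap\mathfrak{p})$. Exactly as in Theorem \ref{thm-lambdaa} (with $\lambda_a$ replaced by $\lambda_u$), the bijection $\Pi_h^{\lambda_u}(L) \to \Pi_h^{\lambda_u}(G)$ is realized by sending $Z_a$ to the unique lowest $K$-type subquotient $X$ of $\mathcal{L}_{\mathfrak{q}(\lambda_u),S}(Z_a)$; and the bottom-layer mechanism (Theorem \ref{thm-bottomlayer} together with Remark \ref{rmk-bottomlayer}, applied to $\mathfrak{q}(\lambda_u)$) equips $X$ with the Hermitian form inherited from a chosen Hermitian form on $Z_a$, under which the signatures of bottom-layer $K$-types of $X$ agree with those of the corresponding $(L\cap K)$-types of $Z_a$. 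The statement thus splits into: (i) $Z_a$ unitary $\Rightarrow$ $X$ unitary; (ii) $X$ unitary $\Rightarrow$ $Z_a$ unitary.

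Implication (i) is the direction along which the parameter $\lambda_u$ is designed to work. Since $\mathfrak{u}(\lambda_u) = \{\alpha : \langle\alpha,\lambda_u\rangle > 0\}$, the positivity needed on the center of $\mathfrak{l}$ holds, so the infinitesimal character $(\lambda_u - \rho(\mathfrak{u}(\lambda_u)),\nu)$ of $Z_a$ is weakly fair for $\mathfrak{q}(\lambda_u)$. Choosing the form on $Z_a$ positive definite and invoking the unitarizability of cohomological induction in the weakly fair range, the inherited form on $X$ is positive semidefinite, hence --- as $X$ is irreducible --- positive definite, so $X$ is unitary. (For $U(p,q)$ this last step is elementary; in general it is the half already carried out in \cite{SRV98}, Section 3.) The real content of the conjecture is (ii).

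For (ii) I would argue by contraposition: assuming $Z_a$ non-unitary, I must exhibit a $K$-type of $X$ on which the inherited form is indefinite. The signatures of the lowest $(L\cap K)$-types $\eta_i$ of $Z_a$ are carried by the bijection onto the lowest $K$-types $\delta_j$ of $X$; and, by the bottom-layer correspondence (Theorem \ref{thm-bottomlayer}, Remark \ref{rmk-bottomlayer}), the signatures of the $(L\cap K)$-types occurring in the $\eta_i \otimes (\mathfrak{l}\cap\mathfrak{p})$ are carried onto the signatures of $K$-types of $X$ occurring among the $\delta_j$ and the constituents of $\delta_j\otimes\mathfrak{p}$. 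Hence it suffices to know that $Z_a$ is \emph{non-unitary up to level $\mathfrak{p}$} in the sense of Definition \ref{def-upto}; this robust detection statement --- that non-unitarity of an irreducible Hermitian module is already witnessed on its lowest $(L\cap K)$-types together with their tensor products with $\mathfrak{p}$ --- is the crux, and for a general reductive group it is not known (it is, in effect, the content of the conjecture).

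For $G = U(p,q)$ one proves the needed detection statement by explicit computation. The module $Z_a$ lives on a quasisplit Levi $L$ --- a product of general linear factors and one quasisplit unitary group --- and, as in Theorem \ref{thm-quasisplit}, is a Langlands subquotient of a principal series \eqref{eq-ps}; using the combinatorics of Section \ref{sec-upq} one reduces it, through steps preserving unitarity, to a product of \textbf{fundamental modules}, and non-unitarity of $Z_a$ forces non-unitarity of one such factor. The non-unitarity theorem for fundamental modules, Theorem \ref{thm-upq} --- proved with the intertwining-operator and signature-character tools of Section \ref{sec-nonunit} and with Parthasarathy's Dirac inequality --- says precisely that a non-unitary fundamental module is non-unitary up to level $\mathfrak{p}$ (or up to level $\mathfrak{p}^{\pm}$ in the Hermitian-symmetric blocks). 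Feeding this into the bottom-layer transport above produces a $K$-type of $X$ with indefinite form, contradicting unitarity of $X$, which establishes (ii) for $U(p,q)$. The genuine work is concentrated in (a) the combinatorial reduction of an arbitrary $Z_a$ on a quasisplit unitary group to fundamental cases, and (b) the signature bookkeeping inside Theorem \ref{thm-upq}; granting these, Conjecture \ref{conj-lambdaunitary} for $U(p,q)$ follows formally.
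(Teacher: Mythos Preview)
The paper does not prove Conjecture \ref{conj-lambdaunitary} directly: it is stated as a conjecture, and the route taken is to prove Conjecture \ref{conj-original} for $U(p,q)$ (this is Theorem \ref{thm-general}) and then invoke \cite[Theorem 5.8]{SRV98}, which shows that Conjecture \ref{conj-original} implies Conjecture \ref{conj-lambdaunitary}. Your direct attack on implication (ii) has a genuine gap, namely a misreading of Theorem \ref{thm-upq}. That theorem says: if a fundamental module $\Pi$ has infinitesimal character $\Lambda$ with $\langle\Lambda,\alpha^\vee\rangle > 1$ for some simple root, then $\Pi$ is non-unitary up to level $\mathfrak{p}$. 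It does \emph{not} assert that every non-unitary fundamental module is non-unitary up to level $\mathfrak{p}$ --- a fundamental $\Pi$ whose $\Lambda$ lies inside the FPP region may well be non-unitary with the indefiniteness first visible only on $K$-types far above level $\mathfrak{p}$. Your contrapositive needs precisely this stronger, unproved detection statement (``$Z_a$ non-unitary $\Rightarrow$ non-unitary at level $\mathfrak{p}$''), so the argument collapses there.

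There are secondary problems as well. The Levi $L = G(\lambda_u)$ is not quasisplit in general --- that is a property of $G(\lambda_a)$; for $U(p,q)$ the group $G(\lambda_u)$ is a product of unitary groups $U(p_i,q_i)$ of arbitrary signature --- so your description of $Z_a$ as a principal-series subquotient on a quasisplit Levi is off. And the phrase ``one reduces it, through steps preserving unitarity, to a product of fundamental modules'' hides exactly the difficulty: no such unitarity-preserving reduction is established anywhere in the paper. The paper's route avoids all of this by demanding non-unitarity detection only under the hypothesis that $\Lambda$ lies outside the convex hull \eqref{eq-hull} --- which is what Theorem \ref{thm-upq} is calibrated to deliver --- and leaving the passage from Conjecture \ref{conj-original} to Conjecture \ref{conj-lambdaunitary} to the general argument in \cite{SRV98}.
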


Assuming the conjecture holds, then one can reduce the study of $\widehat{G}$ to the representations
$X$ whose lowest $K$-types $\delta$ satisfies $G(\lambda_u(\delta)) = G$. Such $K$-types are called {\bf unitarily small}. 

\medskip
Since the announcement of \cite{SRV98}, there is nearly no progress on how to prove the above conjecture. Nevertheless, it was shown that the following conjecture would imply Conjecture \ref{conj-lambdaunitary}.
\begin{conjecture}[\cite{SRV98} Conjecture 5.7] \label{conj-original}
Let $X$ be an irreducible, Hermitian $(\mathfrak{g},K)$-module with a unitarily small lowest $K$-type $\delta$ and real infinitesimal character $\Lambda = (\lambda_a(\delta), \nu)$ $\in$ $\mathfrak{h}^*$. If $X$ is unitary, then 
$\Lambda$ must lie in the convex hull:
\begin{equation} \label{eq-hull}
\lambda_u(\delta) + (\text{convex hull of } W(\mathfrak{g},\mathfrak{h}) \cdot \rho(\mathfrak{g})).
\end{equation} 
Otherwise, the Hermitian form of $X$ has opposite signatures on two unitarily small $K$-types $\delta_1$, $\delta_2$ in $X$.
\end{conjecture}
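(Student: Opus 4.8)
The plan is to prove Conjecture \ref{conj-original} for $G = U(p,q)$ (the Salamanca-Riba–Vogan part of Theorem \ref{thm-main}) in its sharper contrapositive form: if the (real) infinitesimal character $\Lambda = (\lambda_a(\delta),\nu)$ of an irreducible Hermitian $(\mathfrak{g},K)$-module $X$ with unitarily small lowest $K$-type $\delta$ lies outside $\lambda_u(\delta) + \mathrm{conv}(W(\mathfrak{g},\mathfrak{h})\cdot\rho(\mathfrak{g}))$, then $X$ is \emph{non-unitary up to level $\mathfrak{p}^{\pm}$} in the sense of Definition \ref{def-upto} — that is, the invariant Hermitian form (unique up to scalar by Theorem \ref{thm-herm}) is already indefinite on the finite set consisting of the lowest $K$-types of $X$ together with the $K$-types occurring in their tensor products with $\mathfrak{p}^{+}$ and $\mathfrak{p}^{-}$. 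Since $U(p,q)$ is Hermitian symmetric, all of these $K$-types are unitarily small, so this implies the conjecture. The first step is combinatorial: for $U(p,q)$ one has $W(\mathfrak{g},\mathfrak{h}) = S_{p+q}$ and $\rho(\mathfrak{g})$ is the standard $GL_{p+q}$ value, so by the Hardy–Littlewood–Pólya majorization criterion the membership $\Lambda \in \lambda_u(\delta) + \mathrm{conv}(W\rho(\mathfrak{g}))$ is equivalent to a family of partial-sum inequalities on the coordinates of $\Lambda - \lambda_u(\delta)$; I would isolate a minimal block of coordinates on which one such inequality fails, recording it in the parametrization of Section \ref{sec-upq}.

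Next I would use the cohomological-induction step of Algorithm \ref{alg-langlands} together with the bottom-layer $K$-type theorem (Theorem \ref{thm-bottomlayer} and Remark \ref{rmk-bottomlayer}) to reduce the non-unitarity of $X$ to that of a smaller module $Z$ living on a $\theta$-stable Levi of the form $U(a,b) \times (\text{general linear factors})$ — the fundamental modules of Section \ref{sec-fund} — chosen so that: (i) the bad coordinate block is carried into the parameter of $Z$; (ii) the lowest $(L\cap K)$-types of $Z$ and their level-$\mathfrak{p}^{\pm}$ companions are $\mathfrak{q}'$-bottom-layer $K$-types, hence keep their signatures in $X$ under $\mathcal{L}^{K}_{\mathfrak{q}',S}$; and (iii) these lifted $K$-types remain unitarily small and pairwise distinct. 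This is exactly the reduction the fundamental-case formalism is designed to perform, and it replaces the general $U(p,q)$ problem by finitely many explicitly described patterns.

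The heart of the argument is then the non-unitarity theorem for fundamental modules (Theorem \ref{thm-upq}): for each fundamental $Z$ whose parameter violates the permutohedron bound, I would compute directly the signature of the invariant Hermitian form on the lowest $(L\cap K)$-types and on the $K$-types obtained from them by tensoring with $\mathfrak{p}^{+}$ and $\mathfrak{p}^{-}$, and exhibit two among them carrying opposite signs. Concretely this means running the intertwining-operator / signature-character machinery of Section \ref{sec-nonunit} on the principal series \eqref{eq-ps} of the quasisplit group, combined with the explicit $R$-group description for $U(p,q)$ to determine which $K$-types actually survive in the relevant irreducible subquotient of \eqref{eq-langlands}. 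Putting Steps together: the two opposite-sign $K$-types of $Z$ pull back through the bottom-layer map to two unitarily small $K$-types of $X$ of opposite signature, contradicting unitarity of $X$ and simultaneously yielding the ``otherwise'' clause of the conjecture.

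The hard part will be the fundamental estimate itself (Theorem \ref{thm-upq}): controlling the signature character of a generically reducible principal series precisely enough on its first two $K$-type ``layers'' is delicate, as it requires exact bottom-layer bookkeeping together with a careful use of $R$-groups to know exactly which $K$-types occur in the Langlands subquotient and with which multiplicities. A secondary difficulty is the combinatorial completeness of the reduction in the second step: one must verify that \emph{every} parameter outside the translated permutohedron reduces to one of the finitely many fundamental patterns, that the minimal bad block can always be positioned inside a Levi of the required fundamental type, and that the unitarily-small property of the distinguished $K$-types is preserved throughout the cohomological induction.
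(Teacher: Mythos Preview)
Your plan has the right large-scale shape --- reduce via bottom layer to fundamental modules and invoke Theorem \ref{thm-upq} --- and that is exactly what the paper does.  But the combinatorial bridge you sketch between ``$\Lambda$ outside the permutohedron'' and ``Theorem \ref{thm-upq} applies to some piece'' is not the one the paper uses, and as you have written it there is a real gap.

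The paper does \emph{not} isolate a majorization-violating block and chase it into a Levi.  Instead it works in the positive direction.  It partitions the combinatorial $\theta$-stable datum of $X$ canonically into maximal fundamental sub-data (neighbouring $\lambda_a$-blocks with content gaps $\le 1$, separated by gaps $>1$), with corresponding Levi $\mathfrak{l}_0 = \bigoplus_i \mathfrak{u}(p_i,q_i)$.  By Proposition \ref{prop-bottom} all level-$\mathfrak{p}$ $(L_0\cap K)$-types along this Levi are bottom layer, so if any fundamental piece $\Pi_i$ had a consecutive $\Lambda_i$-gap $>1$, Theorem \ref{thm-upq} plus Corollary \ref{cor-bottom} would make $X$ non-unitary up to level $\mathfrak{p}$.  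Assuming $X$ unitary, every $\Lambda_i$ therefore lies in its own small permutohedron centred at $(m_i,\dots,m_i)$.  The remaining step --- and this is the content your proposal is missing --- is to show that these local hull conditions, together with the unitarily-small hypothesis on $\delta$, force the global hull condition.  The paper does this by an explicit mean computation: unitarily small forces $\lambda_u(\delta)=(m,\dots,m)$ with $m$ the coordinate mean of $\Lambda$, and (inductively with two pieces of sizes $r_1,r_2$ and means $m_1\ge m_2$) forces $m_1-m_2\le (r_1+r_2)/2$; one then writes
\[
\Lambda-\lambda_u(\delta)\;=\;\frac{m_1-m_2}{r_1+r_2}\sum_{\beta\in\Delta^+(\mathfrak{u}_0,\mathfrak{h})}\beta \;+\; \sum_{\alpha\in\Delta^+(\mathfrak{l}_0,\mathfrak{h})}c_\alpha\alpha
\]
with all coefficients of absolute value $\le \tfrac12$, and applies \cite[Proposition 1.10]{SRV98}.

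Your approach via Hardy--Littlewood--P\'olya is a legitimate characterization of the permutohedron, but the step ``minimal block where a partial-sum inequality fails $\Rightarrow$ some fundamental piece has a $\Lambda$-gap $>1$'' is exactly the contrapositive of the mean computation above, and it is not a formality: it is precisely where the unitarily-small hypothesis must enter (without it the implication is simply false --- each fundamental piece can sit in its own hull while the global $\Lambda$ does not).  You flag this as a ``secondary difficulty'', but it is in fact the main combinatorial content beyond Theorem \ref{thm-upq}.  Two smaller corrections: the fundamental modules of Section \ref{sec-fund} are modules of the full $U(p,q)$ whose $\lambda_a$-datum has no content gap $>1$, not modules on a Levi of shape $U(a,b)\times(\text{general linear factors})$; and the assertion ``since $U(p,q)$ is Hermitian symmetric, all of these $K$-types are unitarily small'' is not automatic --- it requires the argument of \cite[Proposition 7.18]{SRV98}, which the paper invokes explicitly in the proof of Proposition \ref{prop-svfund}.
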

In Section \ref{sec-general}, we will give a proof of this conjecture for $G=U(p,q)$.

\subsection{Vogan's FPP conjecture}
In \cite{V23}, Vogan proposed another possible reduction step in the study of the unitary dual of real reductive groups. Before stating the conjecture, we recall some results in \cite{KV95} on cohomological induction:
\begin{definition} \label{def-good}
    Let $\fq=\fl+\fu$ be a $\theta$-stable parabolic subalgebra of $\fg$, and $L$ be the normalizer of $\fq$ in $G$. An $(\fl, L\cap K)$-module $Z$ with infinitesimal character $\lambda_L$ is in the {\bf good range} if
\begin{equation}\label{good}
{\rm Re} \langle \lambda_L + \rho(\fu), \alpha \rangle > 0, \quad \forall \alpha\in \Delta(\fu, \fh).
\end{equation}
\end{definition}
By \cite[Theorem 0.50]{KV95}, suppose $Z$ is a $(\fl, K \cap L)$-module
in the good range, then the cohomologically induced modules $\mathcal{L
}_{\mathfrak{q},j}(Z) = 0$ for all
$j \neq S = \dim(\fu \cap \fp)$, and $\mathcal{L}_{\mathfrak{q},S}(Z)$ is irreducible if and only if $Z$ is irreducible. 

As for unitarity, it follows from \cite[Proposition 10.9]{V86} that if $Z$ is Hermitian, then $\mathcal{L}_{\mathfrak{q},S}(Z)$ is unitary if and only if $Z$ is unitary. Therefore, one is interested in understanding the irreducible modules that are not cohomologically induced from any proper $\theta$-stable parabolic subalgebras in the good range. 

In view of the above discussions, Vogan's Fundamental Parallelepiped (FPP) conjecture is stated as follows:
\begin{conjecture}[\cite{V23}] \label{conj-fpp}
    Let $X$ be an irreducible $(\mathfrak{g},K)$-module with real infinitesimal character $\Lambda \in \mathfrak{h}^*$, which is not cohomologically induced in the good range from any $(\mathfrak{l},L \cap K)$-module $X_L$ for any proper $\theta$-stable parabolic subalgebra $\mathfrak{q} = \mathfrak{l} + \mathfrak{u}$. If $X$ is unitary, then one must have
    $\langle \Lambda, \alpha^{\vee} \rangle \leq 1$ for all simple roots $\alpha \in \Delta^+(\mathfrak{g},\mathfrak{h})$.
\end{conjecture}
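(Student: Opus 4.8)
\medskip

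The plan is to prove Conjecture \ref{conj-fpp} for $G = U(p,q)$ via its contrapositive, reducing to the \textbf{fundamental cases} studied in Section \ref{sec-fund}. With the reduction to real infinitesimal character already in force, write the dominant representative of $\Lambda$ as $\lambda_1 \geq \lambda_2 \geq \cdots \geq \lambda_n$ with $n = p+q$; since $\Delta(\fg,\fh)$ has type $A_{n-1}$ and the coroots in type $A$ are identified with the roots, the condition $\langle \Lambda, \al^{\vee}\rangle \leq 1$ for all simple roots is exactly $\lambda_i - \lambda_{i+1} \leq 1$ for all $i$. So it suffices to show: if $\lambda_i - \lambda_{i+1} > 1$ for some $i$ and $X$ is \emph{not} cohomologically induced in the good range from any proper $\theta$-stable parabolic, then $X$ is non-unitary. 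Present $X$ through Algorithm \ref{alg-langlands}: fix a lowest $K$-type $\delta$, put $L := G(\lambda_a(\delta))$ (quasisplit, with Levi factors $\prod_j U(p_j,q_j)$, each $|p_j - q_j| \leq 1$), and realize $X$ as the distinguished lowest $K$-type subquotient of $\mathcal{L}_{\mathfrak{q}(\lambda_a),S}(Z_a)$, where $Z_a$ is a Langlands subquotient of a principal series \eqref{eq-ps} of $L$ with infinitesimal character $(\lambda_a',\nu)$.

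\medskip

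\emph{Step 1: reduction to a fundamental module.} The first step is to reduce to the case where $X$ is a \textbf{fundamental module} in the sense of Section \ref{sec-fund}. Using the $U(p,q)$-combinatorics of Section \ref{sec-upq} and the classification of $\theta$-stable parabolic subalgebras of $U(p,q)$, one shows that either $X$ admits a good-range cohomological induction from a proper $\theta$-stable parabolic --- excluded by hypothesis --- or $X$ arises as a cohomological induction $\mathcal{L}_{\mathfrak{q}',S}(X_{L'})$, with $\mathfrak{q}' = \mathfrak{l}' + \mathfrak{u}'$ possibly equal to $\fg$ and otherwise outside the good range, from a fundamental module $X_{L'}$ on a $\theta$-stable Levi $L'$, arranged so that the $K$-types which will detect the non-unitarity of $X_{L'}$ are $\mathfrak{q}'$-bottom-layer $K$-types of $X$ (Theorem \ref{thm-lambdaa}(b)). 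In every case the strict inequality $\lambda_i - \lambda_{i+1} > 1$ is inherited by $X_{L'}$ --- or else a good-range induction from a proper $\theta$-stable parabolic becomes available, contradicting the hypothesis. Thus it suffices to prove non-unitarity of a fundamental module with $\lambda_i - \lambda_{i+1} > 1$ for some $i$.

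\medskip

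\emph{Step 2: non-unitarity of fundamental modules with a large gap.} For $X$ a fundamental module with $\lambda_i - \lambda_{i+1} > 1$, one shows (this is the non-unitarity theorem proved in Section \ref{sec-fund}) that the invariant Hermitian form of $X$, unique up to scalar by Theorem \ref{thm-herm}, has opposite signatures on two $K$-types among the lowest $K$-types $\delta_1, \dots, \delta_r$ and the $K$-types occurring in the tensor products $\delta_\ell \otimes \fp$; equivalently, $X$ is non-unitary up to level $\fp$ in the sense of Definition \ref{def-upto}, hence non-unitary. Via the bijection of Theorem \ref{thm-lambdaa} together with the bottom-layer preservation of signatures (Theorem \ref{thm-bottomlayer} and Remark \ref{rmk-bottomlayer}), this reduces to the corresponding statement for $Z_a$ on the quasisplit Levi $L$ and then, factor by factor, to the elementary building blocks: the complementary series of the $U(1,1)$-factors, and a short list of small configurations on $U(2,1)$, $U(1,2)$ and products, which subsume the basic cases of \cite{KS83}. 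For these one computes the Langlands intertwining operator restricted to $\mathrm{Hom}_K(\delta_\ell \otimes \fp, -)$, using the $R$-group analysis for $U(p,q)$ and the non-unitarity tools of Section \ref{sec-nonunit}, and observes that the relevant signature changes sign precisely as the parameter $\lambda_i - \lambda_{i+1}$ crosses $1$ --- matching the fact that the complementary series of $U(1,1)$ is unitary only for parameter in $[0,1]$.

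\medskip

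\emph{Step 3: conclusion and the main obstacle.} Combining the steps, for a general $X$ as above Step 1 produces a fundamental module $X_{L'}$ with a gap larger than $1$, Step 2 shows $X_{L'}$ is non-unitary up to level $\fp$ via $\mathfrak{q}'$-bottom-layer $K$-types, and Remark \ref{rmk-bottomlayer} transports the indefinite signature to $X$; hence $X$ is non-unitary. This establishes the contrapositive, and with it the FPP conjecture for $U(p,q)$. The main obstacle is Step 1 together with the bottom-layer bookkeeping it feeds into: one must show that the failure of the good-range hypothesis, in the presence of a gap larger than $1$, forces the $\theta$-stable data into the explicit fundamental list, \emph{and} that the sign-changing $K$-types of the fundamental core survive as $\mathfrak{q}'$-bottom-layer $K$-types under the cohomological induction relating $X$ to $X_{L'}$ --- both of which rest on a careful description of lowest $K$-types and $R$-groups for $U(p,q)$. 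Once a fundamental module is reached, the non-unitarity in Step 2 is a finite computation of essentially rank-one and rank-two type.
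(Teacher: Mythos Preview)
Your strategy is the paper's: partition the combinatorial $\theta$-stable datum into fundamental sub-data, apply Theorem~\ref{thm-upq} with bottom-layer transport (Remark~\ref{rmk-blocks}) to each piece, and use the not-good-range-induced hypothesis to pin down where the $>1$ gap lands. Two imprecisions are worth flagging. First, in Step~1 your $X_{L'}$ cannot be a \emph{single} fundamental module unless $X$ already is; in general $L'\cong\prod_i U(p_i,q_i)$ and $X_{L'}=\boxtimes_i Y_i$ with each $Y_i$ fundamental, and the content of the reduction is that the $>1$ gap in $\Lambda$ must fall inside some $\Lambda_i$. The paper makes this precise via Lemma~\ref{lem-good} (good range $\Longleftrightarrow$ the segments $[e_i,b_i]$ satisfy $b_i>e_{i+1}$) together with an interlacing argument: if no $\Lambda_i$ had an internal gap $>1$ and the segments were interlaced (which the not-good-range hypothesis forces), then $\Lambda$ itself would have no gap $>1$. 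You gesture at this (``or else a good-range induction becomes available'') but the segment criterion is the substantive step you have not stated.

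Second, your sketch of how Theorem~\ref{thm-upq} is proved --- reducing via bottom layer to complementary series on $U(1,1)$ and small $U(2,1)$, $U(1,2)$ configurations where one directly computes intertwining operators on $\delta_\ell\otimes\fp$ --- is not the paper's method. The actual proofs (Proposition~\ref{prop-parallel}, Theorem~\ref{thm-semipm}) are deformation arguments: send the large $\nu$-coordinates to infinity, factor the long intertwining operator and check that no level-$\fp^{\pm}$ $K$-type enters or leaves the Langlands quotient along the way (so Corollary~\ref{cor-jantzen} freezes the signatures), and then invoke Parthasarathy's Dirac inequality (Theorem~\ref{thm-partha}) at $t\gg 0$. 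Since you only \emph{cite} Theorem~\ref{thm-upq}, this does not break your FPP argument, but it misrepresents where the work in Section~\ref{sec-fund} actually lies.
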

We will give a proof of this conjecture for $G=U(p,q)$ in Section \ref{sec-fpp}.

\section{Methods for detecting non-unitarity} \label{sec-nonunit}
 In this section, we recall some useful tools for detecting non-unitarity of $X$. They will be used extensively in the proof of the Salamanca-Riba-Vogan's conjecture (Conjecture \ref{conj-original}) as well as the FPP conjecture (Conjecture \ref{conj-fpp}) for $G = U(p,q)$. One also expects them to play an essential role in the proof of both conjectures for other real reductive groups.

\subsection{Parthasarathy's Dirac inequality} 
Parthasarathy's Dirac inequality is very effective in 
detecting non-unitary of various $(\mathfrak{g},K)$-modules. For instance, it is used heavily in \cite{SR88}
(for $SL(n,\mathbb{R})$, $U(p,q)$, $Sp(2n,\mathbb{R})$) and \cite{SR99} (general case)
to prove that all irreducible unitary $(\mathfrak{g},K)$-modules with regular, integral infinitesimal 
character must be an $A_{\mathfrak{q}}(\lambda)$-module. It is also used to prove a slightly weaker version of Conjecture \ref{conj-original} in \cite[Proposition 7.18]{SRV98}.
\begin{theorem}[\cite{SR88},  Lemma 6.1] \label{thm-partha}
Let $X$ be an irreducible, Hermitian $(\mathfrak{g},K)$-module with infinitesimal character $\Lambda$. Suppose there is a $K$-type $\delta$ appearing in $X$ with highest weight $\mu$, and a choice of positive root system $\Delta^+(\mathfrak{g},\mathfrak{t}) \supset \Delta^+(\mathfrak{k},\mathfrak{t})$ such that
\begin{equation} \label{eq-dirac}
\| \{\mu - \rho(\Delta^+(\mathfrak{p}))\} + \rho(\Delta^+(\mathfrak{k})) \| < \| \Lambda \|,
\end{equation}
where $\{\sigma\}$ is defined as the $W(\mathfrak{k},\mathfrak{t})$-conjugate of $\sigma$ making $\{\sigma\}$
$K$-dominant, then the Hermitian form of $X$ is indefinite on level $\mathfrak{p}$, that is, the form has opposite signature on $\delta$ and another $K$-type in $\delta \otimes \mathfrak{p}$.

Suppose furthermore that $\mathfrak{g} = \mathfrak{k} + \mathfrak{p}^+ + \mathfrak{p}^-$ is Hermitian symmetric, 
and there is a $K$-type $\delta$ appearing in $X$ with highest weight $\mu$, such that
\begin{equation} \label{eq-dirach}
\| \{\mu - \rho(\Delta(\mathfrak{p}^{\pm}))\} + \rho(\Delta(\mathfrak{k})) \| < \| \Lambda \|,
\end{equation}
then the Hermitian form is indefinite on level $\mathfrak{p}^{\mp}$.
\end{theorem}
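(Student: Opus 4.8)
The statement to prove is Parthasarathy's Dirac inequality (Theorem \ref{thm-partha}), the classical tool for detecting non-unitarity. The proof I would give follows the standard argument via the Dirac operator acting on $X \otimes \text{spin}$.

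First I would recall the setup: let $X$ be an irreducible Hermitian $(\mathfrak{g},K)$-module with infinitesimal character $\Lambda$, and form the Dirac operator $D \in (U(\mathfrak{g}) \otimes C(\mathfrak{p}))^K$ where $C(\mathfrak{p})$ is the Clifford algebra. The key algebraic identity (Parthasarathy's formula) is $D^2 = -\text{Cas}_{\mathfrak{g}} \otimes 1 + (\text{Cas}_{\widetilde{\mathfrak{k}}} \text{ term}) + (\|\rho(\mathfrak{g})\|^2 - \|\rho(\mathfrak{k})\|^2)$. When $X$ is unitary, $X$ carries a positive-definite invariant form; tensoring with the (positive-definite) spin module $S$, the operator $D$ becomes self-adjoint on $X \otimes S$, hence $D^2 \geq 0$ as an operator. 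The second step is to decompose $X \otimes S$ into $\widetilde{K}$-types: if $\delta \subset X$ is a $K$-type with highest weight $\mu$, then $\delta \otimes S$ contains a $\widetilde{K}$-type of highest weight $\{\mu - \rho(\Delta^+(\mathfrak{p}))\}$ (the extreme component, using the $W(\mathfrak{k},\mathfrak{t})$-conjugate that makes it $K$-dominant). Evaluating $D^2 \geq 0$ on this $\widetilde{K}$-type, and computing the two Casimir eigenvalues, yields exactly $\|\{\mu - \rho(\Delta^+(\mathfrak{p}))\} + \rho(\Delta^+(\mathfrak{k}))\|^2 \geq \|\Lambda\|^2$. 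Contrapositively, if \eqref{eq-dirac} holds, then $X$ cannot be unitary — and moreover the failure is visible already on $X \otimes S$ restricted to the relevant $\widetilde{K}$-types, which forces the form to be indefinite on $\delta$ together with some constituent of $\delta \otimes \mathfrak{p}$ (since $S$ is built from $\Lambda^\bullet \mathfrak{p}$, the indefiniteness propagates to level $\mathfrak{p}$). I would cite \cite{SR88}, \cite[Chapter III]{HP} or the original Parthasarathy reference for the precise bookkeeping of signatures rather than reproduce it.

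For the Hermitian symmetric refinement: here $\mathfrak{p} = \mathfrak{p}^+ \oplus \mathfrak{p}^-$ as $\mathfrak{k}$-modules, and the spin module factors as $S = S^+ \otimes S^-$ where $S^{\pm} = \Lambda^\bullet \mathfrak{p}^{\pm}$. One can then run the same argument using only the ``half'' Dirac operator built from $\mathfrak{p}^+$ (equivalently the $\bar\partial$-type operator), whose square again has a Parthasarathy-type formula but now with $\rho(\Delta(\mathfrak{p}^{\pm}))$ in place of $\rho(\Delta^+(\mathfrak{p}))$. Positivity of this operator on $X \otimes S^+$ (when $X$ is unitary) gives $\|\{\mu - \rho(\Delta(\mathfrak{p}^\pm))\} + \rho(\Delta(\mathfrak{k}))\|^2 \geq \|\Lambda\|^2$; its failure forces indefiniteness on level $\mathfrak{p}^{\mp}$, the sign being governed by which half-spin factor was used.

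The main obstacle is not the inequality itself (which is a short Casimir computation) but the bookkeeping that upgrades ``$X$ is not unitary'' to the sharper conclusion ``the form is indefinite specifically on $\{\delta\} \cup (\delta \otimes \mathfrak{p})$'' with the correct $\mathfrak{p}$ versus $\mathfrak{p}^{\pm}$ bookkeeping. This requires tracking signatures through the $K$-equivariant map $X \otimes S \to X \otimes S$ given by $D$ and identifying which $\widetilde{K}$-isotypic pieces of $X \otimes S$ the indefiniteness lands on, then pulling back along the inclusion $\delta \otimes S \hookrightarrow X \otimes S$. I would handle this exactly as in \cite[Lemma 6.1]{SR88}, and for the Hermitian symmetric statement I would point to the analogous treatment using the holomorphic/antiholomorphic splitting of the spin module, so that in both cases the proof reduces to citing the established signature computation after verifying that our normalizations of $\rho(\Delta^+(\mathfrak{p}))$, $\rho(\Delta(\mathfrak{p}^\pm))$ and $\{\,\cdot\,\}$ match theirs.
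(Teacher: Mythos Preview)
The paper does not actually prove this theorem: it is stated as a known result with citation to \cite[Lemma 6.1]{SR88} and no argument is given in the paper itself. Your sketch of the standard Dirac-operator proof (Parthasarathy's formula for $D^2$, self-adjointness of $D$ on $X\otimes S$ under a definite form, decomposition into $\widetilde{K}$-types, and the Hermitian-symmetric refinement via the half-spin factors) is the expected argument and is consistent with what the cited reference contains, so there is nothing to compare beyond noting that you have supplied what the paper leaves to the literature.
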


As an application of the above theorem, if the infinitesimal character $\Lambda$ of $X$ is `too big', then the lowest $K$-types of $X$ would satisfy
\eqref{eq-dirac} or \eqref{eq-dirach}, implying that any such $X$ is non-unitary up to level $\mathfrak{p}$ or $\mathfrak{p}^{\pm}$.

\subsection{Jantzen filtration}
We will mainly follow \cite{V84} in this section. The theorem below comes \cite[Theorem 3.8]{V84}, which is essential in the proof of the main theorem:
\begin{theorem} \label{thm-jantzen}
Let 
$$I(t) := Ind_{MAN}^G(\sigma \boxtimes (\nu_0 + t\nu) \boxtimes 1),\quad t \in (t_0 - \epsilon, t_0 + \epsilon)$$ 
be a continuous family of parabolically induced modules with a nonzero invariant Hermitian form. Suppose the following holds:
\begin{itemize}
\item $I(t)$ is irreducible for all $t$ satisfying $0 < |t - t_0| < \epsilon$ ; and 
\item $I(t_0)$ is reducible with an irreducible submodule $J(t_0)$.
\end{itemize}
Then there is a Jantzen filtration (see Definition \cite[Definition 3.7]{V84} for details) of $I(t_0)$:
$$I(t_0) = I(t_0)^N \supset I(t_0)^{N-1} \supset \dots \supset I(t_0)^0 = 0$$
satisfying:
\begin{enumerate}
\item[(a)] $I(t_0)^1 = J(t_0)$;
\item[(b)] There is an non-degenerate invariant Hermitian form $\langle, \rangle_n$ on $I(t_0)^{n+1}/I(t_0)^n$ with signature $(p_n(\delta), q_n(\delta))$ for each $K$-type $\delta$. For instance, if $V_{\delta}$ does not appear in the quotient, then $(p_n(\delta), q_n(\delta)) = (0,0)$.
\item[(c)] For all $0 < \epsilon_0 \leq \epsilon$, the non-degenerate invariant Hermitian form on $I(t_0-\epsilon_0)$ has signature 
$$\left(\sum_{n=0}^{N-1} p_n(\delta),\ \sum_{n=0}^{N-1} q_n(\delta)\right)$$
for all $K$-types $\delta$.
\item[(d)] For all $0 < \epsilon_0 \leq \epsilon$, the non-degenerate invariant Hermitian form on $I(t_0+\epsilon_0)$ has signature 
$$\left(\sum_{m} p_{2m}((\delta) + q_{2m+1}(\delta)),\ \sum_{m} (p_{2m+1}(\delta) + q_{2m}(\delta))\right)$$
for all $K$-types $\delta$.
\end{enumerate}
\end{theorem}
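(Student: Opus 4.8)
The plan is to run Jantzen's degeneration argument in the form adapted to $(\mathfrak g,K)$-modules: realize the whole family $\{I(t)\}$ on one fixed vector space, pass to a discrete valuation ring in $t-t_0$, and extract both the submodule filtration of $I(t_0)$ and the signatures on either side of $t_0$ from an orthogonal normal form of the Gram matrices. \textbf{Step 1: a fixed model.} Using the compact picture of parabolic induction, all the $I(t)$ are identified with a single $(\mathfrak k,K)$-module $V$; only the $\mathfrak g$-action and the invariant Hermitian form $\langle\,,\,\rangle_t$ vary, and both are real-analytic in $t$ on $(t_0-\epsilon,t_0+\epsilon)$. On each finite-dimensional $K$-isotypic space $V_\delta$ the form is then a real-analytic family of Hermitian matrices $A_\delta(t)$. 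Since $I(t)$ is irreducible and carries a nonzero invariant form for $0<|t-t_0|<\epsilon$, that form is nondegenerate there, so $\det A_\delta(t)\not\equiv 0$; shrinking $\epsilon$ we may assume $\det A_\delta$ has no zero in $0<|t-t_0|<\epsilon$, which is what makes the signatures in (c), (d) independent of $\epsilon_0$.

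\textbf{Step 2: the valuation filtration.} Let $R$ be the ring of germs at $t_0$ of real-analytic functions of $t$, a discrete valuation ring with uniformizer $t-t_0$ and valuation $v=\mathrm{ord}_{t_0}$. Inside the generic module take the lattice $M:=V\otimes R$; the forms extend to an $R$-valued sesquilinear form on $M$, nondegenerate over $\mathrm{Frac}\,R$ by Step 1. For $j\ge 0$ put $M_j:=\{x\in M:\langle x,M\rangle\subseteq (t-t_0)^jR\}$, a decreasing chain of $R$-submodules which stabilizes to $(t-t_0)M$ once $j>\max_i a_i$, where the $a_i\ge 0$ are the exponents in an orthogonal normal form $A_\delta(t)\sim\mathrm{diag}\big((t-t_0)^{a_i}u_i(t)\big)$ with $u_i(t_0)\neq 0$ real. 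Because the form is $\mathfrak g$- and $K$-invariant and the action preserves $M$, each $M_j$ is a $(\mathfrak g,K)$-submodule over $R$; reducing mod $t-t_0$ and reindexing (set $N=1+\max_i a_i$ and $I(t_0)^n:=$ image of $M_{N-n}$ in $M/(t-t_0)M=I(t_0)$) yields the decreasing filtration of the statement with $I(t_0)^0=0$ and $I(t_0)^N=I(t_0)$. Each graded piece $I(t_0)^{n+1}/I(t_0)^n$ inherits a canonical \emph{nondegenerate} Hermitian form $\langle\,,\,\rangle_n$, obtained by pairing lifts and reading off the leading coefficient at $t_0$ of the relevant power $(t-t_0)^{j(n)}$; writing its signature on the $\delta$-part as $(p_n(\delta),q_n(\delta))$ — necessarily $(0,0)$ when $\delta$ does not occur — gives (b).

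\textbf{Step 3: identifying the bottom step.} One needs $I(t_0)^1=J(t_0)$. The term $I(t_0)^{N-1}$ is the radical of $\langle\,,\,\rangle_{t_0}$, a proper submodule, and it is nonzero because reducibility of $I(t_0)$ forces the form to degenerate there. In the generic single-wall situation $N=2$, so $I(t_0)^1=\mathrm{rad}\,\langle\,,\,\rangle_{t_0}$, and the content is to show this radical equals the given irreducible submodule $J(t_0)$, equivalently that $I(t_0)/I(t_0)^1$ is irreducible. I expect \textbf{this to be the main obstacle}: it requires matching the number of zeros of $\det A_\delta$ at $t_0$, counted with multiplicity (a Jantzen sum formula), against the composition series of $I(t_0)$, together with the Langlands-type fact that $I(t_0)$ has a unique irreducible submodule in this situation, so that the only proper submodule on which the invariant form can vanish identically is $J(t_0)$ itself. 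Making the graded pieces genuinely correspond to composition factors, and ruling out that the radical is strictly larger than $J(t_0)$, is where the real work lies.

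\textbf{Step 4: signatures on the two sides.} Fix $0<\epsilon_0\le\epsilon$. In the orthogonal normal form the $i$-th diagonal entry evaluated at $t_0\pm\epsilon_0$ is $(\pm\epsilon_0)^{a_i}u_i(t_0)$, of sign $\mathrm{sgn}\,u_i(t_0)$ at $t_0+\epsilon_0$ and $(-1)^{a_i}\mathrm{sgn}\,u_i(t_0)$ at $t_0-\epsilon_0$; thus the sign pattern on the two sides differs exactly by $(-1)^{a_i}$. Grouping the indices $i$ by the layer to which the corresponding basis vector belongs (by $a_i$, matched to $n$ via $j(n)$), the contribution of the $n$-th graded piece to the signature of $I(t_0\mp\epsilon_0)$ is the unshifted $(p_n(\delta),q_n(\delta))$ while to that of $I(t_0\pm\epsilon_0)$ it is $(p_n(\delta),q_n(\delta))$ for $n$ even and $(q_n(\delta),p_n(\delta))$ for $n$ odd (the parity being that of the order $j(n)$, which after the reindexing of Step 2 is the parity of $n$). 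With the sign normalization of $\langle\,,\,\rangle_n$ fixed as in \cite{V84} so that the parity twist lands on the $t_0+\epsilon_0$ side, summing over $n$ gives the unshifted formula (c) at $t_0-\epsilon_0$ and the parity-shifted formula (d) at $t_0+\epsilon_0$; independence of $\epsilon_0$ is immediate from the choice in Step 1. Together with Step 3 this establishes (a)–(d).
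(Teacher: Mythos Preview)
The paper does not prove this theorem at all: it is simply quoted as \cite[Theorem 3.8]{V84} and used as a black box (see the sentence introducing Theorem~\ref{thm-jantzen}). So there is no ``paper's own proof'' to compare your proposal against; your sketch already goes well beyond what the manuscript does, and it follows the same Jantzen/Vogan machinery that \cite{V84} develops.

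That said, your sketch has a genuine gap at exactly the place you flag. You reduce part~(a) to the ``generic single-wall situation $N=2$'', but nothing in the hypotheses of the theorem forces $N=2$: the statement allows an arbitrary filtration length, and the conclusion $I(t_0)^1=J(t_0)$ asserts that the \emph{bottom} layer of the Jantzen filtration is a single irreducible submodule. Your identification of $I(t_0)^{N-1}$ with the radical of $\langle\,,\,\rangle_{t_0}$ is correct, but going from there to ``$I(t_0)^1$ is irreducible and equals the given $J(t_0)$'' for general $N$ needs the structural input from \cite{V84} (uniqueness of the Langlands submodule, compatibility of the form with the Langlands pairing, and the sum formula) rather than the ad hoc $N=2$ assumption. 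Steps~1, 2 and~4 are fine and are exactly the standard DVR/orthogonal-normal-form argument; the only missing substance is completing Step~3 without the $N=2$ shortcut, which is precisely the work done in \cite{V84}.
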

Here is an immediate consequence of the above theorem:
\begin{corollary} \label{cor-jantzen}
Let $I(t)$, $t > 0$ be a family of real induced modules with a nonzero invariant Hermitian form. Suppose 
\begin{itemize} 
\item $I(t)$ is irreducible except at $t =  t_1 < t_2 < \dots$;
\item For all $i \in \mathbb{N}$, $I(t_i)$ has a unique irreducible submodule $J(t_i)$; 
\item There exists a $K$-type $\delta$ such that the multiplicities $[\delta : J(t_i)]$ are equal
for all $i$. In other words, the composition factors of $I(t_i)$ other than $J(t_i)$ does {\bf not} contain $\delta$.
\end{itemize}
Then the signature $(p(\delta),q(\delta))$ of $J(t)$ is constant for all $t > 0$.
\end{corollary}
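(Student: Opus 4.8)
\smallskip
\noindent The plan is to combine the rigidity of $K$-type multiplicities in a continuous family with the Jantzen filtration of Theorem \ref{thm-jantzen}, the key observation being that the third hypothesis forces $\delta$ to occur only in the bottom layer of the Jantzen filtration at each reducibility point $t_i$. First I would record the bookkeeping: in a continuous family of parabolically induced modules the underlying $K$-module structure is independent of $t$, so $m := [\delta : I(t)]$ does not depend on $t$. By hypothesis the composition factors of $I(t_i)$ other than $J(t_i)$ are $\delta$-free, hence $m = [\delta : I(t_i)] = [\delta : J(t_i)]$ for every $i$; so, along the whole family, both the generic irreducible module $I(t)$ and each submodule $J(t_i)$ carry exactly the same $\delta$-multiplicity.

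On each open interval on which $I(t)$ is irreducible --- namely $(0,t_1)$ and $(t_i,t_{i+1})$ for $i \geq 1$ --- the nonzero invariant Hermitian form on $I(t)$ is nondegenerate, since its radical is a submodule of an irreducible module; restricted to the $m$-dimensional space $\mathrm{Hom}_K(\delta, I(t))$, which is canonically identified along the family and carries a continuously varying form, the signature is locally constant, hence constant on that interval. Write $(p_i^-, q_i^-)$ and $(p_i^+, q_i^+)$ for the signature on $\delta$ immediately to the left and to the right of $t_i$.

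Next I would cross each $t_i$ using Theorem \ref{thm-jantzen} with $t_0 = t_i$ and $\epsilon$ small enough that $I(t)$ is irreducible for $0 < |t - t_i| < \epsilon$; uniqueness of the irreducible submodule makes $J(t_0) = J(t_i)$ unambiguous. In the resulting filtration $I(t_i) = I(t_i)^N \supset \cdots \supset I(t_i)^0 = 0$ one has $I(t_i)^1 = J(t_i)$, and the composition factors occurring in the higher quotients $I(t_i)^{n+1}/I(t_i)^n$ for $n \geq 1$ are exactly the composition factors of $I(t_i)$ other than $J(t_i)$, none of which contains $\delta$. Therefore $(p_n(\delta), q_n(\delta)) = (0,0)$ for all $n \geq 1$, while $(p_0(\delta), q_0(\delta))$ is the signature of $\delta$ in $J(t_i)$. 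Parts (c) and (d) of Theorem \ref{thm-jantzen} then give $(p_i^-, q_i^-) = (p_0(\delta), q_0(\delta))$ and $(p_i^+, q_i^+) = (p_0(\delta) + q_1(\delta),\, p_1(\delta) + q_0(\delta)) = (p_0(\delta), q_0(\delta))$, so the signature on $\delta$ is unchanged across $t_i$ and coincides there with the signature of $\delta$ in $J(t_i)$.

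Propagating the constant value on $(0, t_1)$ across $t_1 < t_2 < \cdots$ by the previous step, the signature of $\delta$ in $I(t)$ --- equivalently in $J(t)$ when $t \neq t_i$ --- is the same for all $t > 0$, and at each $t_i$ it equals the signature of $\delta$ in $J(t_i)$; this proves the corollary. The step I expect to need the most care is the passage from the hypothesis to the statement that ``$\delta$ lives only in $I(t_i)^1$'': the Jantzen layers are not individual composition factors, so one must argue that because $I(t_i)^1$ equals all of $J(t_i)$ and the remaining composition factors are $\delta$-free (in particular $J(t_i)$ does not recur), the $\delta$-isotypic component of $I(t_i)$ sits entirely inside $I(t_i)^1$ --- which is precisely what makes the sums in parts (c) and (d) collapse to the single term $n = 0$.
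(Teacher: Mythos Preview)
Your proof is correct and follows essentially the same approach as the paper's: the hypothesis forces $(p_n(\delta), q_n(\delta)) = (0,0)$ for all $n \geq 1$ in the Jantzen filtration at each $t_i$, so parts (c) and (d) of Theorem~\ref{thm-jantzen} collapse to the single $n=0$ term and the signature on $\delta$ is unchanged across each reducibility point. The paper's proof is two sentences to this effect; you have filled in the surrounding bookkeeping (constancy of multiplicity in the family, continuity of the signature on irreducible intervals, and the observation that $\delta$ lives entirely in $I(t_i)^1$) that the paper leaves implicit.
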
 
\begin{proof}
By hypothesis, $(p_n(\delta), q_n(\delta)) = (0,0)$ for all $n > 0$. By Theorem \ref{thm-jantzen}(d),
the signature of $\delta \in \widehat{K}$ remains unchanged upon passing from $t_i - \epsilon$ to $t_i + \epsilon$ for all $i$.
\end{proof}

To end this section, we briefly mention the general strategy for detecting non-unitarity of an irreducible $(\mathfrak{g},K)$-module $X$ in this manuscript. Firstly, suppose there exists a  $(\mathfrak{l}',L'\cap K)$-module $Z$ 
such that
$X$ is a lowest $K$-type subquotient of $\mathcal{L}_{\mathfrak{q}',S}(Z)$ (e.g. $L' = L$ and $Z = Z_a$ in Theorem \ref{thm-lambdaa}), and $Z$
has indefinite Hermitian forms on two bottom layer $L' \cap K$-types
$\beta_1, \beta_2$ up to level $(\mathfrak{l}' \cap \mathfrak{p})^{\pm}$, 
then Remark \ref{rmk-bottomlayer} implies that the $K$-types $\mathcal{L}_{\mathfrak{q}',S}^K(\beta_1), 
\mathcal{L}_{\mathfrak{q}',S}^K(\beta_2)$ also have indefinite forms on $X$ up to level $\mathfrak{p}^+$.


\smallskip
In the cases when the bottom layer $K$-type arguments do not work, we apply the {\it deformation arguments} which are widely used in \cite{BJ90a}, \cite{BC05} and \cite{BDW22} to study the unitary dual for various real and p-adic reductive groups. More explicitly,  
consider a family of Hermitian representations $X(t)$ for $t \geq 0$ with $X(0) = X$
by increasing some `big' $\nu$-coordinates of $X$ (see Proposition \ref{prop-parallel} and Theorem \ref{thm-semipm}). In such a case, the multiplicities of the $K$-types up to level $\mathfrak{p}^{\pm}$ remain unchanged for all $X(t)$. Then Corollary \ref{cor-jantzen} implies that the signature of such $K$-types remain unchanged as $t$ goes to $\infty$, so that one can invoke Parthasarathy's Dirac inequality (Theorem \ref{thm-partha}) to conclude that these $K$-types have indefinite signatures for large $t$, and hence for all $t \geq 0$.

\section{Representations of $U(p,q)$} \label{sec-upq}
In this section, we give a description of the information needed for the construction of all
irreducible $(\mathfrak{g},K)$-modules given in Algorithm \ref{alg-langlands} for $G = U(p,q)$.

\subsection{$\lambda_a$-blocks}
As in Steps (i) -- (ii) of Algorithm \ref{alg-langlands}, one needs to describe $\lambda_a(\delta)$
for all $\delta \in \widehat{K}$. 
The following combinatorial description of $\lambda_a$ was first introduced in \cite{SR88}. A similar description also appears in \cite{B04}.
\begin{definition}
Let $G = U(p,q)$ with $p + q \equiv \epsilon\ (\text{mod}\ 2)$ ($\epsilon = 0$ or $1$). 
A {\bf $\lambda_a$-block of size $(r,s)$ with content $\gamma$} (or simply a {\bf $\gamma$-block}),
where $0 \leq r \leq p$, $0 \leq s \leq q$ are non-negative integers satisfying $|r-s| \leq 1$, and $\gamma \in \frac{1}{2}\mathbb{Z}$ is of one of the following forms:
\begin{itemize}
\item {\bf Rectangle} of size $(r,r)$:
\begin{center}
\begin{tikzpicture}
\draw
    (0,0) node {\Large $\gamma$}
 -- (0.5,0) node {\Large $\gamma$}		
 -- (1,0) node {\Large $\ldots$}
 -- (1.5,0) node {\Large $\gamma$}
 -- (2,0) node {\Large $\gamma$}
 -- (2,-1) node {\Large $\gamma$}
 -- (1.5,-1) node {\Large $\gamma$}
-- (1,-1) node {\Large $\ldots$}
-- (0.5,-1) node {\Large $\gamma$}
-- (0,-1) node {\Large $\gamma$}
 -- cycle;
\end{tikzpicture} 
\end{center}
where $\gamma + \frac{\epsilon}{2} \in \mathbb{Z}$.

\item {\bf Parallelogram} of size $(r,r)$:
\begin{center}
\begin{tikzpicture}
\draw
    (0,0) node {\Large $\gamma$}
 -- (0.5,0) node {\Large $\gamma$}		
 -- (1,0) node {\Large $\ldots$}
 -- (1.5,0) node {\Large $\gamma$}
 -- (2,0) node {\Large $\gamma$}
 -- (2.5,-1) node {\Large $\gamma$}
 -- (2,-1) node {\Large $\gamma$}
-- (1.5,-1) node {\Large $\ldots$}
-- (1,-1) node {\Large $\gamma$}
-- (0.5,-1) node {\Large $\gamma$}
 -- cycle;
\end{tikzpicture} or 
\begin{tikzpicture}
\draw
    (0,0) node {\Large $\gamma$}
 -- (0.5,0) node {\Large $\gamma$}		
 -- (1,0) node {\Large $\ldots$}
 -- (1.5,0) node {\Large $\gamma$}
 -- (2,0) node {\Large $\gamma$}
 -- (1.5,-1) node {\Large $\gamma$}
 -- (1,-1) node {\Large $\gamma$}
-- (0.5,-1) node {\Large $\ldots$}
-- (0,-1) node {\Large $\gamma$}
-- (-0.5,-1) node {\Large $\gamma$}
 -- cycle;
\end{tikzpicture},
\end{center}
where $\gamma + \frac{\epsilon + 1}{2} \in \mathbb{Z}$.

\item {\bf Trapezoid} of size $(r,r-1)$ or $(r,r+1)$:
\begin{center}
\begin{tikzpicture}
\draw
    (0,0) node {\Large $\gamma$}
 -- (0.5,0) node {\Large $\gamma$}		
 -- (1,0) node {\Large $\ldots$}
 -- (1.5,0) node {\Large $\gamma$}
 -- (2,0) node {\Large $\gamma$}
 -- (1.5,-1) node {\Large $\gamma$}
-- (1,-1) node {\Large $\ldots$}
-- (0.5,-1) node {\Large $\gamma$}
 -- cycle;
\end{tikzpicture} or 
\begin{tikzpicture}
\draw
 (0.5,0) node {\Large $\gamma$}		
 -- (1,0) node {\Large $\ldots$}
 -- (1.5,0) node {\Large $\gamma$}
 -- (2,-1) node {\Large $\gamma$}
 -- (1.5,-1) node {\Large $\gamma$}
-- (1,-1) node {\Large $\ldots$}
-- (0.5,-1) node {\Large $\gamma$}
-- (0,-1) node {\Large $\gamma$}
 -- cycle;
\end{tikzpicture},
\end{center}
where $\gamma + \frac{\epsilon + 1}{2} \in \mathbb{Z}$.

\end{itemize}
\end{definition}

\subsection{$\lambda_a$-data and $K$-types}
We now relate each $\delta \in \widehat{K}$ to a union of $\lambda_a$-blocks called {\bf $\lambda_a$-datum} (Definition \ref{def-datum}). The main result in this section is Proposition \ref{prop-khat}, which gives a 1-1 correspondence between $\widehat{K}$ and all $\lambda_a$-data of $G$.
This is closely related to the well-known bijection between $\widehat{K}$ and all tempered representations with real infinitesimal characters.
\begin{definition} \label{def-datum}
A {\bf $\lambda_a$-datum for $G$} is a collection of $\gamma_i$-blocks of sizes $(r_i,s_i)$ such that $\sum r_i = p$, $\sum s_i = q$ and all $\gamma_i$ are distinct.
\end{definition}

It is immediate from \cite[Section 8]{SR88} that for each $\delta \in \widehat{K}$, $\lambda_a := \lambda_a(\delta)$ determines a unique 
$\lambda_a$-datum, which we are going to describe for the rest of this section: Let $\mu := \mu(\delta)$, then
$$\mu + 2\rho(\mathfrak{k}) = (x_1, x_2, \dots, x_p\ |\ y_1, y_2, \dots, y_q),\quad \quad x_i - x_{i+1}, y_j - y_{j+1} \geq 2\quad \forall\ i, j.$$
Rearrange the entries of $\rho = \left(\frac{p+q-1}{2}, \frac{p+q-3}{2}, \dots, -\frac{p+q-3}{2}, -\frac{p+q-1}{2}\right)$
into 
$$w\rho = (r_1, r_2, \dots, r_{p+q})$$ 
so that it is in the same order as $\mu + 2\rho(\mathfrak{k})$, and subtract it from
$\mu + 2\rho(\mathfrak{k})$. In other words,
the $i^{th}$-largest integer in $\mu + 2\rho(\mathfrak{k})$ will be subtracted by $\frac{p+q-2i+1}{2}$. 

There is an ambiguity in our algorithm above if $x_i = y_j$ for some $i$ and $j$. Suppose they are the $i^{th}$ and $(i+1)^{st}$
largest integers in $\mu + 2\rho(\mathfrak{k})$, then instead of subtracting $x_i$ and $y_j$ by $\frac{p+q-2i\pm 1}{2}$ and $\frac{p+q-2i\mp 1}{2}$,
we subtract both terms by $\frac{p+q-2i}{2}$ (this is due to the definition of $P$ in Equation \eqref{eq-lambdaa}).

One obtains $\lambda_a := \lambda_a(\delta)$ by applying the above algorithm to $\mu(\delta)$, as well as the the $\theta$-stable parabolic
subalgebra $\mathfrak{q}(\lambda_a)$ and the quasisimple Levi subgroup $L$ in Algorithm \ref{alg-langlands}.
\begin{example}[\cite{B04}, Example 2.3] \label{eg-barbasch}
Let $G = U(7,4)$ and $\mu = (2,2,2,2,2,2,2\ |\ 0, -3,-3,-4)$. Then
\begin{align*}
\mu + 2\rho(\mathfrak{k}) &= (2,2,2,2,2,2,2\ |\ 0, -3,-3,-4) + (6,4,2,0,-2,-4,-6\ |\ 3,1,-1,-3)\\ 
&= 
(8,6,4,2,0,-2,-4\ |\ 3,-2,-4,-7).\end{align*}
Note that the $6^{th}$ and $9^{th}$ coordinates, as well as the $7^{th}$ and $10^{th}$ coordinates
are equal. In this case, 
\begin{align*}
\lambda_a(\mu) &= (8,6,4,2,0,-2,-4\ |\ 3,-2,-4,-7) - \Bigg(5,4,3,1,0,{\bf \frac{-3}{2}},{\bf \frac{-7}{2}}\ \Bigg|\ 2,{\bf \frac{-3}{2}},{\bf \frac{-7}{2}},-5\Bigg)\\ 
&= \Bigg(3,2,1,1,0,\frac{-1}{2},\frac{-1}{2}\ \Bigg|\ 1,\frac{-1}{2},\frac{-1}{2},-2\Bigg)
\end{align*}
and hence its corresponding $\lambda_a$-datum is
\begin{center}
\begin{tikzpicture}
\draw
    (0,0) 
 -- (0.25,0) node {\Large $3$}
-- (0.5,0) 
-- (0.35,-1) 
-- (0.15,-1) 
 -- cycle;

\draw
    (1,0) 
 -- (1.25,0) node {\Large $2$}
-- (1.5,0) 
-- (1.35,-1) 
-- (1.15,-1) 
 -- cycle;

\draw
    (2,0) node {\Large $1$}
 -- (3,0) node {\Large $1$}		
 -- (2.7,-1) 
-- (2.5,-1) node {\Large $1$}
-- (2.3,-1) 
 -- cycle;

\draw
    (3.5,0) 
 -- (3.75,0) node {\Large $0$}
-- (4,0) 
-- (3.85,-1) 
-- (3.65,-1) 
 -- cycle;

\draw
    (4.5,0) node {\Large $\frac{-1}{2}$}
 -- (5.5,0) node {\Large $\frac{-1}{2}$}
-- (5.5,-1) node {\Large $\frac{-1}{2}$}
-- (4.5,-1) node {\Large $\frac{-1}{2}$}
 -- cycle;

\draw
    (6.65,0) 
 -- (6.85,0) 
-- (7,-1) 
-- (6.75,-1) node {\Large $-2$}
-- (6.5,-1) 
 -- cycle;
\end{tikzpicture} 
\end{center}
\end{example}

There can be different $\delta$'s having the same $\lambda_a(\delta)$. Indeed, this happens precisely
when $\lambda_a(\mu)$ contains coordinates of the form
$$\lambda_a(\mu) = (\cdots, \underbrace{\gamma, \dots, \gamma}_{r\ entries}, \cdots\ |\ \cdots, \underbrace{\gamma, \dots, \gamma}_{r\ entries}, \cdots), \quad \gamma + \frac{\delta + 1}{2} \in \mathbb{Z},$$
Suppose the $r$ labeled entries on the left and right are the $(f_{\gamma}^++1)^{st} - (f_{\gamma}^++r)^{th}$ and 
$(f_{\gamma}^-+1)^{st} - (f_{\gamma}^-+r)^{th}$ coordinates respectively, then there
are exactly two choices of the corresponding coordinates of $\mu$:
\begin{equation} \label{eq-mupm}
\begin{aligned}
\mu_+ &= (\cdots, \underbrace{a, \dots, a}_{r\ entries}, \cdots\ |\ \cdots, \underbrace{b, \dots, b}_{r\ entries}, \cdots), \\
\mu_- &= (\cdots, \underbrace{a-1, \dots, a-1}_{r\ entries}, \cdots\ |\ \cdots, \underbrace{b+1, \dots, b+1}_{r\ entries}, \cdots)
\end{aligned}
\end{equation}
so that the $(f_{\gamma}^++1)^{st} - (f_{\gamma}^++r)^{th}$ and 
$(f_{\gamma}^-+1)^{st} - (f_{\gamma}^-+r)^{th}$ coordinates of $\lambda_a(\mu_{\pm})$ are equal to that of $\lambda_a(\mu)$. 
We assign $\mu_+$ with the $\lambda_a$-datum containing a $\gamma$-block of shape
\begin{tikzpicture}
\draw
    (0,0)  
 -- (0.5,0) 
 -- (0.75,-0.5) 
 -- (0.25,-0.5) 
 -- cycle;
\end{tikzpicture}, and $\mu_-$ with the $\lambda_a$-datum containing a $\gamma$-block of shape
\begin{tikzpicture}
\draw
 (0.25,0)	
 -- (0.75,0) 
 -- (0.5,-0.5) 
-- (0,-0.5) 
 -- cycle;
\end{tikzpicture}.
\begin{example} \label{eg-63}
Let $G = U(6,3)$ and $\mu_+ = ({\bf 0,0},-1,-1,-1,-1\ |\ {\bf 2,2},1)$ and $\mu_- = ({\bf -1,-1},-1,-1,-1,-1\ |\ {\bf 3,3},1)$.
One calculates easily that $\lambda_a(\mu_{\pm}) = ({\bf 1,1},0,0,-1,-2\ |\ {\bf 1,1}, 0)$ in both cases.
Under the correspondence described above, we have
\begin{center}
\begin{tikzpicture}

\draw (-1,-0.5) node {$\mu_+ \longleftrightarrow$};

\draw
    (0,0) node {\Large $1$}
 -- (1,0) node {\Large $1$}
-- (1.3,-1) node {\Large $1$}
-- (0.3,-1) node {\Large $1$}
 -- cycle;

\draw
    (1.5,0) node {\Large $0$}
 -- (2.5,0) node {\Large $0$}
-- (2.3,-1) 
-- (2,-1) node {\Large $0$}
-- (1.7,-1) 
 -- cycle;

\draw
    (2.8,0) 
 -- (3.25,0) node {\Large $-1$}		
-- (3.7,0)
 -- (3.4,-1) 
-- (3,-1) 
 -- cycle;

\draw
    (3.8,0) 
 -- (4.25,0) node {\Large $-2$}		
-- (4.7,0)
 -- (4.4,-1) 
-- (4,-1) 
 -- cycle;
\end{tikzpicture} \quad \quad
\begin{tikzpicture}

\draw (-1,-0.5) node {$\mu_- \longleftrightarrow$};

\draw
    (0,0) node {\Large $1$}
 -- (1,0) node {\Large $1$}
-- (0.7,-1) node {\Large $1$}
-- (-0.3,-1) node {\Large $1$}
 -- cycle;

\draw
    (1.5,0) node {\Large $0$}
 -- (2.5,0) node {\Large $0$}
-- (2.3,-1) 
-- (2,-1) node {\Large $0$}
-- (1.7,-1) 
 -- cycle;

\draw
    (2.8,0) 
 -- (3.25,0) node {\Large $-1$}		
-- (3.7,0)
 -- (3.4,-1) 
-- (3,-1) 
 -- cycle;

\draw
    (3.8,0) 
 -- (4.25,0) node {\Large $-2$}		
-- (4.7,0)
 -- (4.4,-1) 
-- (4,-1) 
 -- cycle;
\end{tikzpicture}.
\end{center}
\end{example}

The above discussions leads to:
\begin{proposition}[\cite{SR88}, Proposition 8.2] \label{prop-khat}
Let $G = U(p,q)$. Then there is a one-to-one correspondence between $\widehat{K}$ and
the set of all $\lambda_a$-data of $G$.
\end{proposition}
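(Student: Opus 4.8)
The plan is to establish the bijection in two directions. First I would show that every $\lambda_a$-datum arises from some $\delta \in \widehat{K}$: given a $\lambda_a$-datum consisting of $\gamma_i$-blocks of sizes $(r_i, s_i)$, I reconstruct a candidate highest weight $\mu$ by running the algorithm described above in reverse. Concretely, each block of content $\gamma$ of a given shape prescribes, via \eqref{eq-mupm}, the values of the corresponding coordinates of $\mu + 2\rho(\mathfrak{k})$ up to the choice already recorded by the shape of the block (rectangle versus the two parallelogram orientations, and likewise for trapezoids); adding back the appropriate entries of $w\rho$ then yields $\mu$. I would check that the resulting $\mu$ is genuinely $K$-dominant, \emph{i.e.} that $x_i - x_{i+1} \geq 2$ and $y_j - y_{j+1} \geq 2$ after rearranging $\mu + 2\rho(\mathfrak{k})$ into weakly decreasing order on each of the two strings of length $p$ and $q$ — this is where the size constraints $|r-s| \le 1$ on each block and the integrality conditions on $\gamma$ relative to $\epsilon$ are used, and where the case $x_i = y_j$ and the convention forcing both subtractions to be $\frac{p+q-2i}{2}$ must be handled carefully.

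Second, for injectivity, I would argue that the map $\delta \mapsto (\text{$\lambda_a$-datum of }\delta)$ is determined by $\lambda_a(\delta)$ \emph{together with} the extra discrete data recorded by the block shapes, and that this composite datum determines $\mu(\delta)$. The point is precisely the discussion around \eqref{eq-mupm}: the only ambiguity in recovering $\mu$ from $\lambda_a(\mu)$ occurs at blocks of content $\gamma$ with $\gamma + \tfrac{\delta+1}{2} \in \mathbb{Z}$ where equal values appear in both the left and right strings, and at such blocks there are exactly the two possibilities $\mu_+$ and $\mu_-$, which by construction are assigned the two distinct parallelogram/trapezoid orientations. Hence distinct $K$-types with the same $\lambda_a$ receive distinct $\lambda_a$-data, and conversely a $\lambda_a$-datum pins down one $\mu$. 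Combined with the well-definedness (each $\delta$ gives a legitimate $\lambda_a$-datum, which is \cite[Section 8]{SR88}) and surjectivity from the first step, this gives the bijection.

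The cleanest route is probably to cite \cite[Proposition 8.2]{SR88} for the core bijection and restrict the new content here to translating between Barbasch-type coordinates and the block picture: namely, that a $\lambda_a$-datum in the sense of Definition \ref{def-datum} is the same data as an $L$-dominant, $K$-dominant weight modulo the parametrization of lowest $K$-types of the relevant (limits of) discrete series of the quasisplit Levi $L$. One can phrase the whole thing as: $\widehat{K}$ is in bijection with pairs $(\lambda_a, \text{orientation data})$, the orientation data being exactly a choice of shape (rectangle / two parallelograms / two trapezoids) for each maximal constant block, and this is visibly the same as the set of $\lambda_a$-data.

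\textbf{Main obstacle.} The delicate point is the bookkeeping at coincidences $x_i = y_j$: ensuring the reconstruction of $\mu$ from a $\lambda_a$-datum is consistent with the modified subtraction rule (subtracting $\frac{p+q-2i}{2}$ from both coincident entries), that the resulting $\mu$ lands back in the dominant cone with gaps $\geq 2$, and that the count of orientation choices exactly matches the number of $K$-types sharing a given $\lambda_a$. Verifying that no spurious or missing data arise — in particular that trapezoidal blocks (sizes $(r,r\pm1)$) behave correctly at the string boundaries where $p \neq q$ forces an imbalance — is the part requiring genuine care rather than routine computation.
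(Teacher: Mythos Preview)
Your proposal is correct and in spirit matches the paper: both arguments construct an explicit inverse to the map $\delta \mapsto (\lambda_a\text{-datum})$ and observe that the only multi-valuedness of $\lambda_a \mapsto \mu$ is resolved by the parallelogram orientation, exactly as you say via \eqref{eq-mupm}.

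The one substantive difference is in \emph{how} the inverse is written down. You propose to reverse the forward algorithm literally: add back the permuted entries of $w\rho$ to recover $\mu + 2\rho(\mathfrak{k})$, then strip off $2\rho(\mathfrak{k})$, handling the $x_i = y_j$ coincidences by the parallelogram orientation. The paper instead gives the inverse via the cohomological-induction formula of \cite[Proposition~2.6]{SRV98}: subtract $\rho(\mathfrak{u}(\lambda_a))$ from each block, shift parallelogram entries by $\pm\tfrac{1}{2}$ according to orientation, then add $2\rho(\mathfrak{u}\cap\mathfrak{p})$ (i.e.\ apply $\mathcal{L}_{\mathfrak{q},S}^K$). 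The two formulas are of course equal, but the paper's version buys something: it ties the inverse directly to the bottom-layer map $\mathcal{L}_{\mathfrak{q},S}^K$ of Theorem~\ref{thm-lambdaa}(b), which is exactly what is used later (Definition~\ref{def-bottom}, Corollary~\ref{cor-bottom}, Proposition~\ref{prop-bottom}) to read off bottom-layer conditions from block shapes. Your direct reversal is more elementary and makes the ``obstacle'' you flag (dominance of the reconstructed $\mu$, consistency at coincidences) the visible content of the proof, whereas the paper offloads that verification to \cite{SRV98}.
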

\begin{proof}
The map from $\delta \in \widehat{K}$ to a $\lambda_a$-datum was described explicitly above.
As for the inverse, one can apply the following algorithm, which
is a restatement of Proposition 2.6 of \cite{SRV98}:
\begin{itemize}
\item[(i)] For each $\lambda_a$-datum, subtract the content of each $\gamma$-block by $\rho(\mathfrak{u}(\lambda_a))$.
\item[(ii)] For each parallelogram block of shape \begin{tikzpicture} \draw
    (0,0)  
 -- (0.5,0) 
 -- (0.75,-0.5) 
 -- (0.25,-0.5) 
 -- cycle;
\end{tikzpicture}, add all top entries of the block by $1/2$ and subtract all 
bottom entries of the block by $1/2$.
\item[(iii)] For each parallelogram block of shape \begin{tikzpicture}
\draw
 (0.25,0)	
 -- (0.75,0) 
 -- (0.5,-0.5) 
-- (0,-0.5) 
 -- cycle;
\end{tikzpicture}, subtract all top entries of the block by $1/2$ and add all 
bottom entries of the block by $1/2$.
\item[(iv)] Add $2\rho(\mathfrak{u} \cap \mathfrak{p})$ to the result in (iii)
(this corresponds to the functor $\mathcal{L}_{\mathfrak{q},S}^K(\bullet)$ in Theorem \ref{thm-lambdaa}(b)).
\end{itemize} 
\end{proof}
\begin{example}
Recall the $\lambda_a$-datum
\begin{center}
\begin{tikzpicture}
\draw
    (0,0) node {\Large $1$}
 -- (1,0) node {\Large $1$}
-- (0.7,-1) node {\Large $1$}
-- (-0.3,-1) node {\Large $1$}
 -- cycle;

\draw
    (1.5,0) node {\Large $0$}
 -- (2.5,0) node {\Large $0$}
-- (2.3,-1) 
-- (2,-1) node {\Large $0$}
-- (1.7,-1) 
 -- cycle;

\draw
    (2.8,0) 
 -- (3.25,0) node {\Large $-1$}		
-- (3.7,0)
 -- (3.4,-1) 
-- (3,-1) 
 -- cycle;

\draw
    (3.8,0) 
 -- (4.25,0) node {\Large $-2$}		
-- (4.7,0)
 -- (4.4,-1) 
-- (4,-1) 
 -- cycle;
\end{tikzpicture}
\end{center}
in Example \ref{eg-63}. Then $\rho(\mathfrak{u}) = \frac{1}{2}(5,5,-2,-2,-6,-8|5,5,-2)$ and hence
$$\lambda_a' = (1,1,0,0,-1,-2|1,1,0) - \rho(\mathfrak{u}) = \Big(\frac{-3}{2},\frac{-3}{2},1,1,2,2 \Big|\frac{-3}{2},\frac{-3}{2},1\Big)$$
By the shape of the parallelogram, and $2\rho(\mathfrak{u} \cap \mathfrak{p}) = (1,1,-2,-2,-3,-3|4,4,0)$, one has
$$\mu = \Big(\frac{-3}{2}-\frac{1}{2},\frac{-3}{2}-\frac{1}{2},1,1,2,2 \Big|\frac{-3}{2}+\frac{1}{2},\frac{-3}{2}+\frac{1}{2},1\Big) + (1,1,-2,-2,-3,-3|4,4,0)$$
which is precisely the $\mu_- = (-1,-1,-1,-1,-1,-1|3,3,1)$ in Example \ref{eg-63}.
\end{example}

\subsection{Langlands classification}
As discussed in the previous section, each $\lambda_a$-datum determines a $\delta \in \widehat{K}$, $\lambda_a = \lambda_a(\delta)$ and $L = G(\lambda_a)$ in Algorithm \ref{alg-langlands}. The only missing piece is the $\nu \in \mathfrak{a}_0^*$. Therefore we have make the following definition:
\begin{definition} \label{def-comb}
Let $G = U(p,q)$, and $X$
be an irreducible, Hermitian $(\mathfrak{g},K)$-module representation with real infinitesimal character
$\Lambda = (\lambda_a(\mu), \nu)$ and a lowest $K$-type $\delta$. A {\bf combinatorial $\theta$-stable datum}
attached to $X$ is given the following two components:
\begin{itemize}
\item[(i)] A $\lambda_a$-datum determined by $\delta$ under the bijection in Proposition \ref{prop-khat}; and
\item[(ii)] For each $\gamma_i$-block of size $(r_i,s_i)$ in (1), an element 
$\nu_i \in \mathbb{R}^{\min\{r_i,s_i\}}$ of the form $\nu_i = (\nu_{i,1} \geq \nu_{i,2} \geq \dots \geq -\nu_{i,2} \geq -\nu_{i,1})$ (we say $\nu_i$ be the {\bf $\nu$-coordinates corresponding to the $\gamma_i$-block}), so that $\nu = (\nu_1; \nu_2; \dots)$ up to conjugacy.
\end{itemize}
\end{definition}

Each combinatorial datum attached to $X$ determines an induced module \eqref{eq-langlands},
so that $X$ is the irreducible subquotient of the induced module containing $\delta$. 
As discussed after Algorithm \ref{alg-langlands}, the value of $\nu$ in (ii) determines
whether $X$ contains another lowest $K$-type $\delta' \neq \delta$. If both $\delta'$ and $\delta$
appear in $X$, one can apply $\delta'$ instead of $\delta$ in the above definition, and
obtain another combinatorial data attached to the same $X$. 
We will describe explicitly when this will happen in Example \ref{eg-lkts}.

\begin{example}
Let $G = U(6,2)$. The combinatorial $\theta$-stable datum attached to the trivial representation is of the form:
\begin{center}
\begin{tikzpicture}
\draw
    (0,0) 
 -- (0.4,0) node {\Large $\frac{3}{2}$}
-- (0.8,0) 
-- (0.6,-1) 
-- (0.2,-1) 
 -- cycle;

\draw
    (1,0) 
 -- (1.4,0) node {\Large $\frac{1}{2}$}
-- (1.8,0) 
-- (1.6,-1) 
-- (1.2,-1) 
 -- cycle;

\draw
    (2,0) node {\Large $0$}
 -- (3,0) node {\Large $0$}		
 -- (3,-1) node {\Large $0$}
-- (2,-1) node {\Large $0$}
 -- cycle;

\draw[arrows = {-Stealth[]}]          (2.2,0)   to [out=90,in=90]node[above]{$\frac{7}{2}, \frac{5}{2}$} (-0.5,0);
\draw[arrows = {-Stealth[]}]          (2.8,0)   to [out=90,in=90]node[above]{$\frac{-5}{2}, \frac{-7}{2}$} (5.5,0);

\draw
    (3.2,0) 
 -- (3.6,0) node {\Large $\frac{-1}{2}$}
-- (4,0) 
-- (3.8,-1) 
-- (3.4,-1) 
 -- cycle;

\draw
    (4.2,0) 
 -- (4.6,0) node {\Large $\frac{-3}{2}$}
-- (5,0) 
-- (4.8,-1) 
-- (4.4,-1) 
 -- cycle;

\end{tikzpicture} 
\end{center}
More precisely, one computes that $\lambda_a(\mathrm{triv}) = (\frac{3}{2}, \frac{1}{2}, 0,0,0,0, \frac{-1}{2},\frac{-3}{2})$. Since the infinitesimal character of $\mathrm{triv}$ is $\Lambda = \rho$,
$\nu$ must be equal to $(0,0,\frac{7}{2},\frac{5}{2},\frac{-5}{2},\frac{-7}{2},0,0)$.

\medskip
As another example, the trivial representation of  $U(5,2)$ has combinatorial $\theta$-stable datum:
\begin{center}
\begin{tikzpicture}
\draw
    (0,0) 
 -- (0.4,0) node {\Large $1$}
-- (0.8,0) 
-- (0.6,-1) 
-- (0.2,-1) 
 -- cycle;

\draw
    (1,0)  node {\Large $0$}
 -- (1.75,0) node {\Large $0$}		
 -- (2.5,0) node {\Large $0$}		
 -- (2.25,-1) node {\Large $0$}
-- (1.25,-1) node {\Large $0$}
 -- cycle;

\draw[arrows = {-Stealth[]}]          (1.2,0)   to [out=90,in=90]node[above]{$3,2$} (-0.5,0);
\draw[arrows = {-Stealth[]}]          (2.3,0)   to [out=90,in=90]node[above]{$-2,-3$} (4,0);

\draw
    (2.7,0) 
 -- (3.1,0) node {\Large $-1$}
-- (3.5,0) 
-- (3.3,-1) 
-- (2.9,-1) 
 -- cycle;
\end{tikzpicture} 
\end{center}
\end{example}

\begin{example} \label{eg-lkts}
We begin by constructing irreducible $(\mathfrak{g},K)$-modules in $U(1,1)$ 
having more than one lowest $K$-type. Firstly, all such $K$-types must 
have the same $\lambda_a$-value by Theorem \ref{thm-lambdaa}(a). By Proposition \ref{prop-khat},
this occurs only when the $\lambda_a$-block is a parallelogram with $\gamma \in \mathbb{Z} + \frac{1}{2}$.

More precisely, the $K$-types $\delta_1 = V_{(k+1|k)}$ and $\delta_2 = V_{(k|k+1)}$ have $\lambda_a = (\frac{k+1}{2},\frac{k+1}{2})$ and $L = G(\lambda_a) = G$. Then $\gamma := \delta_1|_{T'} = \delta_2|_{T'} = \det^{2k+1}$ for $i = 1,2$, and
hence the two combinatorial $\theta$-stable data
\begin{center}
\begin{tikzpicture}
\draw

(1.5,-0.5) node {,}

    (0,0)  node (1) {}
 -- (0.4,0) node {\Large $\frac{k+1}{2}$}
-- (0.8,0) node (2) {}
-- (1,-1) 
-- (0.6,-1) node {\Large $\frac{k+1}{2}$}
-- (0.2,-1)
 -- cycle;
\draw[arrows = {-Stealth[]}]          (0,0)   to [out=90,in=90]node[above]{$\nu_1$} (-0.5,0);
\draw[arrows = {-Stealth[]}]          (0.8,0)   to [out=90,in=90]node[above]{$-\nu_1$} (1.3,0);
\end{tikzpicture} \begin{tikzpicture}
\draw
    (0,0) node (3) {}
 -- (0.4,0) node {\Large $\frac{k+1}{2}$}
-- (0.8,0) node (4) {}
-- (0.6,-1) 
-- (0.2,-1) node {\Large $\frac{k+1}{2}$}
-- (-0.2,-1)
 -- cycle;
\draw[arrows = {-Stealth[]}]          (0,0)   to [out=90,in=90]node[above]{$\nu_1$} (-0.5,0);
\draw[arrows = {-Stealth[]}]          (0.8,0)   to [out=90,in=90]node[above]{$-\nu_1$} (1.3,0);
\end{tikzpicture}
\end{center}
both correspond to the same principal series representation $\pi_{\nu} = Ind_{T'A'N'}^G(\gamma \boxtimes \nu \boxtimes 1).$

\smallskip
If $\nu = (\nu_1, -\nu_1)$ with $\nu_1 \neq 0$, then $\pi_{\nu}$ is irreducible, and the {\bf two} combinatorial data
\begin{center}
\begin{tikzpicture}
\draw
(-1,-0.5) node {$\pi_{\nu} \longleftrightarrow$}
(1.5,-0.5) node {,}

    (0,0)  node (1) {}
 -- (0.4,0) node {\Large $\frac{k+1}{2}$}
-- (0.8,0) node (2) {}
-- (1,-1) 
-- (0.6,-1) node {\Large $\frac{k+1}{2}$}
-- (0.2,-1)
 -- cycle;
\draw[arrows = {-Stealth[]}]          (0,0)   to [out=90,in=90]node[above]{$\nu_1$} (-0.5,0);
\draw[arrows = {-Stealth[]}]          (0.8,0)   to [out=90,in=90]node[above]{$-\nu_1$} (1.3,0);
\end{tikzpicture} \begin{tikzpicture}
\draw
    (0,0) node (3) {}
 -- (0.4,0) node {\Large $\frac{k+1}{2}$}
-- (0.8,0) node (4) {}
-- (0.6,-1) 
-- (0.2,-1) node {\Large $\frac{k+1}{2}$}
-- (-0.2,-1)
 -- cycle;
\draw[arrows = {-Stealth[]}]          (0,0)   to [out=90,in=90]node[above]{$\nu_1$} (-0.5,0);
\draw[arrows = {-Stealth[]}]          (0.8,0)   to [out=90,in=90]node[above]{$-\nu_1$} (1.3,0);
\end{tikzpicture}
\end{center}
correspond to the {\bf same} irreducible module $\pi_{\nu}$ (with two lowest $K$-types).

\smallskip
If $\nu = (0,0)$, it is well-known that $\pi_{0} = \pi_{ds}^{+} \oplus \pi_{ds}^{-}$ splits into $2$ `limits of discrete series' with $K$-spectra $\pi_{ds}^+|_{K} := \bigoplus_{m \in \mathbb{N}} V_{(k+m+1|k-m)}$
and $\pi_{ds}^-|_{K} := \bigoplus_{m \in \mathbb{N}} V_{(k-m|k+m+1)}$. So we have
\begin{center}
\begin{tikzpicture}
\draw
(-1,-0.5) node {$\pi_{ds}^+ \longleftrightarrow$}

    (0,0)  node (1) {}
 -- (0.4,0) node {\Large $\frac{k+1}{2}$}
-- (0.8,0) node (2) {}
-- (1,-1) 
-- (0.6,-1) node {\Large $\frac{k+1}{2}$}
-- (0.2,-1)
 -- cycle;
\path (1) edge     [loop above]       node[] {$0$}         (1);
\path (2) edge     [loop above]       node[] {$0$}         (2);
\end{tikzpicture} \quad \quad  \quad
\begin{tikzpicture}
\draw

(-1,-0.5) node {$\pi_{ds}^- \longleftrightarrow$}

    (0,0) node (3) {}
 -- (0.4,0) node {\Large $\frac{k+1}{2}$}
-- (0.8,0) node (4) {}
-- (0.6,-1) 
-- (0.2,-1) node {\Large $\frac{k+1}{2}$}
-- (-0.2,-1)
 -- cycle;
\path (3) edge     [loop above]       node[] {$0$}         (3);
\path (4) edge     [loop above]       node[] {$0$}         (4);
\end{tikzpicture}
\end{center}
and the matching is unique, since the limits of discrete series has unique lowest $K$-types.

\smallskip
In general, an irreducible module $X$ corresponds to more than one combinatorial data if and only if 
one of its combinatorial data contains a parallelogram $\gamma$-block
with all $\nu$-coordinates not equal to zero (this follows immediately from the the calculations of $R$-groups in \cite{V79}). In such a case, all other combinatorial data corresponding to $X$ can be obtained by reflecting between parallelograms 
with all nonzero $\nu$-coordinates:
\begin{center}
\begin{tikzpicture}
 \draw   
(1.8,-0.5) node {$\leftrightsquigarrow$}

(0,0)  
-- (0.8,0)
-- (1.2,-1) 
-- (0.4,-1)
 -- cycle;

\end{tikzpicture} \begin{tikzpicture}
\draw    (0,0)  
-- (0.8,0)
-- (0.4,-1) 
-- (-0.4,-1)
 -- cycle;
\end{tikzpicture}
\end{center}

As a consequence, one can also determine {\bf all} lowest $K$-types of $X$, the total number of which is equal to $2^{\#(\text{parallelogram }\gamma \text{-blocks with all nonzero } \nu \text{ entries})}$. 

In Remark \ref{rmk-parallel} below, we will also determine the signatures of the Hermitian form on these lowest $K$-types.
\end{example}


\begin{definition} \label{def-bottom}
Let $X$ be an irreducible admissible $(\mathfrak{g},K)$-module with a lowest $K$-type $\delta$. Suppose
there is a $\gamma$-block of size $(r,s)$ in its corresponding combinatorial $\theta$-stable datum, i.e. 
$\lambda_a(\delta)$ is of the form
$$\lambda_a(\delta) = (\cdots, \underbrace{\gamma, \dots, \gamma}_{\begin{matrix} (f^+_{\gamma}+1)^{st}\ to\ (f^+_{\gamma}+r)^{th}\\ coordinates \end{matrix}}, \cdots| 
\cdots, \underbrace{\gamma, \dots, \gamma}_{\begin{matrix} (f^-_{\gamma}+1)^{st}\ to\ (f^-_{\gamma}+s)^{th}\\ coordinates \end{matrix}}, \cdots).$$
\begin{itemize}
\item[(a)] The {\bf $\gamma$-component of a lowest $K$-type $\delta$} are the $(f^+_{\gamma}+1)^{st}$ -- $(f^+_{\gamma}+r)^{th}$ and
$(f^-_{\gamma}+1)^{st}$ -- $(f^+_{\gamma}+s)^{th}$ coordinates of its corresponding highest weight $\mu = \mu(\delta)$:
\begin{align*}
\mu = (\mu_1^+, \cdots, \underline{\mu_{f^+_{\gamma}+1}^+, \dots, \mu_{f^+_{\gamma}+r}^+}, \cdots, \mu_p^+| \mu_1^-, \cdots, \underline{\mu_{f^-_{\gamma}+1}^-, \dots, \mu_{f^-_{\gamma}+s}^-}, \cdots, \mu_q^-).
\end{align*}
\item[(b)] We say $X$ is {\bf $\mathfrak{p}^+$-bottom layer on the $\gamma$-block} if 
$$\mu_{f^+_{\gamma}}^+ > \mu_{f^+_{\gamma}+1}^+ \quad \quad \text{and} \quad \quad \mu_{f^-_{\gamma}+s}^- > \mu_{f^-_{\gamma}+s+1}^-$$
\item[(c)] Similarly, we say $X$ is {\bf $\mathfrak{p}^-$-bottom layer on the $\gamma$-block} if 
$$\mu_{f^+_{\gamma}+r}^+ > \mu_{f^+_{\gamma}+r+1}^+ \quad \quad \text{and} \quad \quad \mu_{f^-_{\gamma}}^- > \mu_{f^-_{\gamma}+1}^-.$$
\end{itemize}
\end{definition}

As a consequence of Theorem \ref{thm-bottomlayer}, we have:
\begin{corollary} \label{cor-bottom}
Let $X$ be an irreducible representation corresponding to a combinatorial $\theta$-stable datum. Suppose
$X$ is $\mathfrak{p}^{\pm}$-bottom layer on a $\gamma$-block in its combinatorial datum, and the $U(r,s)$-representation corresponding to this single $\gamma$-block is non-unitary up to level $\mathfrak{p}^{\pm}$ (Definition \ref{def-upto}), 
then $X$ is also non-unitary up to level $\mathfrak{p}^{\pm}$.
\end{corollary}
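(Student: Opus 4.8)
The plan is to reduce the statement for $X$ to the corresponding statement for the $U(r,s)$-representation attached to the $\gamma$-block, and then invoke the bottom-layer theorem (Theorem \ref{thm-bottomlayer}) in the form of Remark \ref{rmk-bottomlayer}. First I would unwind the combinatorial $\theta$-stable datum of $X$: write $\lambda_a = \lambda_a(\delta)$ and let $\mathfrak{q}' = \mathfrak{l}' + \mathfrak{u}'$ be the $\theta$-stable parabolic obtained by grouping together exactly the $\gamma$-block coordinates (the $(f^+_\gamma+1)^{st}$ through $(f^+_\gamma+r)^{th}$ entries on the left and the $(f^-_\gamma+1)^{st}$ through $(f^-_\gamma+s)^{th}$ entries on the right) versus everything else, so that $L' \cong U(r,s) \times (\text{product of smaller unitary groups})$. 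By construction $X$ is a lowest $K$-type subquotient of $\mathcal{L}_{\mathfrak{q}',S}(Z)$, where $Z = Z_\gamma \boxtimes Z_{\text{rest}}$ and $Z_\gamma$ is precisely the $U(r,s)$-representation attached to the single $\gamma$-block in the sense of Definition \ref{def-comb}, with the $\nu$-coordinates $\nu_i$ restricted to that block.

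Next I would translate the $\mathfrak{p}^\pm$-bottom-layer hypothesis of Definition \ref{def-bottom} into the $K$-dominance condition needed in Remark \ref{rmk-bottomlayer}. The point is that the inequalities $\mu^+_{f^+_\gamma} > \mu^+_{f^+_\gamma+1}$ and $\mu^-_{f^-_\gamma+s} > \mu^-_{f^-_\gamma+s+1}$ (for the $\mathfrak{p}^+$ case) are exactly what is required so that, for the lowest $(L'\cap K)$-type $\eta_\gamma$ of $Z_\gamma$ (tensored with the trivial-type contributions from the other factors), the weight $\mu(\eta_\gamma \boxtimes \eta_{\text{rest}}) + 2\rho(\mathfrak{u}'\cap\mathfrak{p})$ is $K$-dominant; the same computation applies to each $K$-type $\chi^\pm_j$ occurring in $\eta_\gamma \otimes \mathfrak{p}^\pm$ of $U(r,s)$, since shifting a lowest-weight coordinate down by $1$ inside the block does not destroy $K$-dominance of the cohomologically induced weight when the gap hypotheses hold. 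Thus all the $L'\cap K$-types entering the definition of ``non-unitary up to level $(\mathfrak{l}'\cap\mathfrak{p})^\pm$'' for $Z_\gamma$ are bottom-layer $K$-types in the sense of the Remark.

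Then I would invoke Remark \ref{rmk-bottomlayer}: if $Z = Z_\gamma \boxtimes Z_{\text{rest}}$ carries an invariant Hermitian form and $Z_\gamma$ has indefinite signature on the collection $\{\eta^{(1)}_\gamma,\dots\} \cup \{\chi^\pm_j\}$, i.e. $p^{Z_\gamma}$ and $q^{Z_\gamma}$ are both nonzero there, then the signatures are transported unchanged to the corresponding $K$-types $\mathcal{L}^K_{\mathfrak{q}',S}(\beta')$ of $X$, which are precisely (a subset of) the lowest $K$-types of $X$ and the $K$-types in $\delta \otimes \mathfrak{p}^\pm$. Hence $\sum_i p^X(\delta_i) + \sum_j p^X(\chi_j^\pm)$ and the corresponding $q$-sum are both nonzero, i.e. $X$ is non-unitary up to level $\mathfrak{p}^\pm$. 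One subtlety to address is that $X$ need not be exactly $\mathcal{L}_{\mathfrak{q}',S}(Z)$ but only its lowest-$K$-type subquotient; here Theorem \ref{thm-bottomlayer} (via \cite[Theorem 6.34]{KV95}) already accounts for this, since the bottom-layer $K$-types survive in the subquotient with their full multiplicity and signature.

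\textbf{Main obstacle.} The step I expect to require the most care is verifying the $K$-dominance condition: one must check that the $\mathfrak{p}^\pm$-bottom-layer inequalities in Definition \ref{def-bottom}(b),(c) really do guarantee $K$-dominance of $\mu(\beta') + 2\rho(\mathfrak{u}'\cap\mathfrak{p})$ not just for the lowest $(L'\cap K)$-types but for \emph{every} $K$-type $\chi^\pm_j$ appearing in $\eta_\gamma \otimes \mathfrak{p}^\pm$ of the small group $U(r,s)$ — this is a finite but slightly delicate bookkeeping argument about which coordinates of the weight get shifted by $\pm 1$ and comparing against the adjacent coordinates of $\mu$ outside the block. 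Everything else is a direct application of the machinery already set up in Section \ref{sec-prelim}.
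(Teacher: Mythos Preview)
Your proposal is correct and is exactly the argument the paper has in mind: the paper does not write out a proof at all, stating the corollary simply ``as a consequence of Theorem \ref{thm-bottomlayer}'' and leaving the unpacking (construction of $\mathfrak{q}'$, verification that the $\mathfrak{p}^{\pm}$-bottom-layer inequalities in Definition \ref{def-bottom} are precisely the $K$-dominance conditions for the level-$\mathfrak{p}^{\pm}$ types, and appeal to Remark \ref{rmk-bottomlayer}) to the reader. What you have written is that unpacking, and the ``main obstacle'' you flag is indeed the only place where any real checking happens; your analysis of it is fine (the strict inequalities on integer coordinates give exactly the room needed for the $\pm 1$ shift at the boundary of the block, while shifts in the interior of the block are already controlled by $(L'\cap K)$-dominance of the $\chi_j^{\pm}$).
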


In other words, under the bottom layer hypothesis, one can detect non-unitarity of $X$ by just looking at a single $\gamma$-block in (one of) its combinatorial datum. This will be a very effective tool in proving the main theorem.

We now give a necessary and sufficient condition to determine whether $X$ is $\mathfrak{p}^{\pm}$-bottom layer on a certain $\gamma$-block by simply looking at the shapes of the blocks in the combinatorial datum of $X$.
\begin{proposition} \label{prop-bottom}
Let $X$ be a $(\mathfrak{g},K)$-module whose combinatorial $\theta$-stable datum contains a $\gamma$-block. Then
\begin{itemize}
\item[(a)] $X$ is $\mathfrak{p}^+$-bottom layer on the $\gamma$-block iff the {\bf top-left} corner of the $\gamma$-block is 
{\bf NOT} of the form:
\begin{center}
\begin{tikzpicture}
\draw
    (1.5,0) 
-- (2,0)
 -- (1.5,-1)
 -- (1,-1);

\draw
    (2.7,0) 
-- (2.2,0) node {$\bullet$}
 -- (2.2,-1)
 -- (2.7,-1);  

\draw (0.5,-0.5) node {$(\gamma+ \frac{1}{2})$-block};
\draw (2.8,-0.5) node {$\gamma$-block};
\end{tikzpicture},
\begin{tikzpicture}
\draw
    (1.5,0) 
-- (2,0)
 -- (1.5,-1)
 -- (1,-1);

\draw
    (2.7,0) 
-- (2.2,0) node {$\bullet$}
 -- (2.7,-1)
 -- (3.2,-1);  

\draw (0.5,-0.5) node {$(\gamma+1)$-block};
\draw (3.2,-0.5) node {$\gamma$-block};

\end{tikzpicture},
\begin{tikzpicture}
\draw
    (1.5,0) 
-- (2,0)
 -- (2,-1)
-- (1.5,-1);

\draw
    (2.7,0) 
-- (2.2,0) node {$\bullet$}
 -- (2.7,-1)
 -- (3.2,-1);  

\draw (0.7,-0.5) node {$(\gamma+ \frac{1}{2})$-block};
\draw (3.2,-0.5) node {$\gamma$-block};
\end{tikzpicture}
\end{center}

and the {\bf bottom-right} corner of the $\gamma$-block is {\bf NOT} of the form:
\begin{center}
\begin{tikzpicture}
\draw
    (1,0) 
-- (1.5,0)
 -- (1.5,-1) node {$\bullet$}
 -- (1,-1);

\draw
    (2.7,0) 
-- (2.2,0)
 -- (1.7,-1)
 -- (2.2,-1);  

\draw (3.2,-0.5) node {$(\gamma - \frac{1}{2})$-block};
\draw (0.85,-0.5) node {$\gamma$-block};
\end{tikzpicture}, 
\begin{tikzpicture}
\draw
    (1,0) 
-- (1.5,0)
 -- (2,-1) node {$\bullet$}
 -- (1.5,-1);

\draw
    (3.2,0) 
-- (2.7,0)
 -- (2.2,-1)
 -- (2.7,-1);  

\draw (3.7,-0.5) node {$(\gamma - 1)$-block};
\draw (1,-0.5) node {$\gamma$-block};

\end{tikzpicture}, 
\begin{tikzpicture}
\draw
    (1.5,0) 
-- (2,0)
 -- (2.5,-1) node {$\bullet$}
-- (2,-1);

\draw
    (3.2,0) 
-- (2.7,0)
 -- (2.7,-1)
 -- (3.2,-1);  

\draw (3.9,-0.5) node {$(\gamma - \frac{1}{2})$-block};
\draw (1.5,-0.5) node {$\gamma$-block};
\end{tikzpicture}
\end{center}

\item[(b)] $X$ is $\mathfrak{p}^-$-bottom layer on the $\gamma$-block iff the {\bf top-right} corner of the $\gamma$-block is 
{\bf NOT} of the form:
\begin{center}
 \begin{tikzpicture}
\draw
    (1.5,0) 
-- (2,0) node {$\bullet$}
 -- (2,-1)
-- (1.5,-1);

\draw
    (2.7,0) 
-- (2.2,0)
 -- (2.7,-1)
 -- (3.2,-1);  

\draw (3.55,-0.5) node {$(\gamma - \frac{1}{2})$-block};
\draw (1.3,-0.5) node {$\gamma$-block};
\end{tikzpicture},
\begin{tikzpicture}
\draw
    (1.5,0) 
-- (2,0) node {$\bullet$}
 -- (1.5,-1)
 -- (1,-1);

\draw
    (2.7,0) 
-- (2.2,0)
 -- (2.7,-1)
 -- (3.2,-1);  

\draw (3.55,-0.5) node {$(\gamma - 1)$-block};
\draw (1,-0.5) node {$\gamma$-block};
\end{tikzpicture}, 
\begin{tikzpicture}
\draw
    (1.5,0) 
-- (2,0) node {$\bullet$}
 -- (1.5,-1)
 -- (1,-1);

\draw
    (2.7,0) 
-- (2.2,0)
 -- (2.2,-1)
 -- (2.7,-1);  
\draw (3.3,-0.5) node {$(\gamma - \frac{1}{2})$-block};
\draw (1,-0.5) node {$\gamma$-block};
\end{tikzpicture},
\end{center}

and the {\bf bottom-left} corner of the $\gamma$-block is {\bf NOT} of the form:
\begin{center}
\begin{tikzpicture}
\draw
    (1.5,0) 
-- (2,0)
 -- (2.5,-1)
-- (2,-1);

\draw
    (3.2,0) 
-- (2.7,0)
 -- (2.7,-1) node {$\bullet$}
 -- (3.2,-1);  

\draw (1.1,-0.5) node {$(\gamma + \frac{1}{2})$-block};
\draw (3.3,-0.5) node {$\gamma$-block};
\end{tikzpicture}, 
\begin{tikzpicture}
\draw
    (1,0) 
-- (1.5,0)
 -- (2,-1)
 -- (1.5,-1);

\draw
    (3.2,0) 
-- (2.7,0)
 -- (2.2,-1) node {$\bullet$}
 -- (2.7,-1);  

\draw (0.6,-0.5) node {$(\gamma + 1)$-block};
\draw (3.1,-0.5) node {$\gamma$-block};
\end{tikzpicture}, \quad 
\begin{tikzpicture}
\draw
    (1,0) 
-- (1.5,0)
 -- (1.5,-1)
 -- (1,-1);

\draw
    (2.7,0) 
-- (2.2,0)
 -- (1.7,-1) node {$\bullet$}
 -- (2.2,-1);  

\draw (0.3,-0.5) node {$(\gamma + \frac{1}{2})$-block};
\draw (2.6,-0.5) node {$\gamma$-block};
\end{tikzpicture}
\end{center}
\end{itemize}
\end{proposition}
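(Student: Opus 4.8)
The plan is to make the four inequalities in Definition~\ref{def-bottom}(b)--(c) completely explicit by computing the highest weight $\mu=\mu(\delta)$ directly from the combinatorial datum, and then to read the conclusion off from the shapes of two adjacent blocks.

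First I would run the inverse bijection of Proposition~\ref{prop-khat}. One starts from $\lambda_a(\delta)$, which equals the content $\gamma$ at every compact coordinate and every noncompact coordinate of a $\gamma$-block; subtracts $\rho(\mathfrak{u}(\lambda_a))$; applies the $\pm\frac12$ parallelogram shift of steps (ii)--(iii); and adds $2\rho(\mathfrak{u}(\lambda_a)\cap\mathfrak{p})$. Because $\mathfrak{u}(\lambda_a)$ is governed by the signs of the pairings $\langle e_i-e_j,\lambda_a\rangle=\gamma_i-\gamma_j$, each of the three vectors involved is constant on the compact row and on the noncompact row of every block, and its value there is a simple count of the sizes $(r_j,s_j)$ of the blocks of larger, resp.\ smaller, content. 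The outcome is that $\mu$ itself is constant on the compact row of the $\gamma$-block, with value
\[
a^{+}_{\gamma}=\gamma+\frac12\sum_{\gamma_j>\gamma}(r_j-s_j)-\frac12\sum_{\gamma_j<\gamma}(r_j-s_j)+\epsilon_\gamma,
\]
and constant on the noncompact row with value $a^{-}_{\gamma}=2\gamma-a^{+}_{\gamma}$; here $\epsilon_\gamma\in\{-\frac12,0,+\frac12\}$ records the parallelogram shift, with $\epsilon_\gamma=0$ for rectangles and trapezoids. I would check the signs and the two parallelogram conventions against Example~\ref{eg-63}.

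Since $\mu$ is constant on each row of each block, the first inequality of Definition~\ref{def-bottom}(b), namely $\mu^{+}_{f^{+}_\gamma}>\mu^{+}_{f^{+}_\gamma+1}$, is exactly $a^{+}_{\gamma^{\sharp}}>a^{+}_{\gamma}$, where $\gamma^{\sharp}$ is the content of the block sitting immediately to the left of the $\gamma$-block on the compact row (necessarily of larger content); the second, $\mu^{-}_{f^{-}_\gamma+s}>\mu^{-}_{f^{-}_\gamma+s+1}$, is $a^{-}_{\gamma}>a^{-}_{\gamma^{\flat}}$ with $\gamma^{\flat}$ the content of the block immediately to the right on the noncompact row (a missing neighbour makes the corresponding inequality vacuous). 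Thus $X$ is $\mathfrak{p}^{+}$-bottom layer on the $\gamma$-block iff both strict inequalities hold, and $\mathfrak{p}^{-}$-bottom layer iff the symmetric pair holds (using the rightmost compact coordinate and the leftmost noncompact coordinate of the block). Since $\mu$ is $K$-dominant each of these differences is automatically $\ge 0$, so the content of the proposition is precisely the list of configurations in which it equals $0$.

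Plugging the formula into $a^{+}_{\gamma^{\sharp}}-a^{+}_{\gamma}$ gives $(\gamma^{\sharp}-\gamma)-\frac12(r_{\gamma}-s_{\gamma})-\frac12(r_{\gamma^{\sharp}}-s_{\gamma^{\sharp}})+(\epsilon_{\gamma^{\sharp}}-\epsilon_{\gamma})$, together with a further $+1$ for each size-$(0,1)$ trapezoid whose content lies strictly between $\gamma$ and $\gamma^{\sharp}$ (such a block does not meet the compact row but still shifts the counts). The contents split into the ``rectangle'' coset and the ``parallelogram/trapezoid'' coset of $\frac12\mathbb{Z}/\mathbb{Z}$, so $\gamma^{\sharp}-\gamma$ is a positive half-integer, while $|r-s|\le 1$ and $\epsilon\in\{-\frac12,0,\frac12\}$ confine the remaining terms to a short range. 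I expect this case analysis to be the only genuine difficulty: one has to keep simultaneous track of the coset of each content (hence the admissible gaps $\gamma^{\sharp}-\gamma$), the $\pm\frac12$ parallelogram shifts, the value $r-s\in\{-1,0,1\}$ for each of the two blocks, and the correction from any size-$(0,1)$ or $(1,0)$ trapezoid squeezed between the $\gamma$-block and its genuine neighbour on one row. Organizing the enumeration by the slant of the shared edge (vertical, left-leaning, right-leaning) and by the two values $r-s$ makes it finite and short, and one checks that the difference vanishes exactly in the three corner-configurations listed in part~(a) and is strictly positive otherwise; the bottom-right corner and part~(b) then follow by the same bookkeeping together with the symmetry $\gamma\leftrightarrow-\gamma$, $(r,s)\leftrightarrow(s,r)$, $\mathfrak{p}^{+}\leftrightarrow\mathfrak{p}^{-}$. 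The pictures in the statement are precisely the surviving configurations.
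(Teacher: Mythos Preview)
Your proposal is correct and is exactly the approach the paper intends: the paper's proof is the single sentence ``This follows easily from the calculations of $\delta\in\widehat{K}$ from its $\lambda_a$-data given in Proposition~\ref{prop-khat},'' and you have simply carried out those calculations explicitly, deriving the block-constant values of $\mu$ from the inverse bijection and then doing the short case analysis. Your formula for $a^+_\gamma$ checks against Example~\ref{eg-63}, and the bookkeeping with the $(0,1)$-trapezoid correction and the parallelogram shifts is handled correctly.
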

\begin{proof}
This follows easily from the calculations of $\delta \in \widehat{K}$ from its $\lambda_a$-data given in Proposition \ref{prop-khat}.
\end{proof}

\begin{remark} \label{rmk-blocks}
    By Remark \ref{rmk-bottomlayer}, the results in Corollary \ref{cor-bottom} and Proposition \ref{prop-bottom} can be generalized to a union of neighboring $\gamma$-blocks instead of a single $\gamma$-block. More explicitly, one can analogously apply Proposition \ref{prop-bottom} to the leftmost and rightmost block in the union to detect non-unitarity of $X$ up to level $\mathfrak{p}^{\pm}$ (see Example \ref{eg-barbasch2} below for more details).
\end{remark}

\begin{example} \label{eg-barbasch2}
Let $G = U(7,4)$. We continue with Example \ref{eg-barbasch}. Let $X$ be an irreducible representation
with lowest $K$-type $\delta \in \widehat{K}$ such that
$$\mu(\delta) = (2,2,2,2,2,2,2\ |\ 0, -3,-3,-4),\quad \quad \lambda_a(\delta) = \Bigg(3,2,1,1,0,\frac{-1}{2},\frac{-1}{2}\ \Bigg|\ 1,\frac{-1}{2},\frac{-1}{2},-2\Bigg)$$
and $\lambda_a$-datum
\begin{center}
\begin{tikzpicture}
\draw
    (0,0) 
 -- (0.25,0) node {\Large $3$}
-- (0.5,0) 
-- (0.35,-1) 
-- (0.15,-1) 
 -- cycle;

\draw
    (1,0) 
 -- (1.25,0) node {\Large $2$}
-- (1.5,0) 
-- (1.35,-1) 
-- (1.15,-1) 
 -- cycle;

\draw
    (2,0) node {\Large $1$}
 -- (3,0) node {\Large $1$}		
 -- (2.7,-1) 
-- (2.5,-1) node {\Large $1$}
-- (2.3,-1) 
 -- cycle;

\draw
    (3.5,0) 
 -- (3.75,0) node {\Large $0$}
-- (4,0) 
-- (3.85,-1) 
-- (3.65,-1) 
 -- cycle;

\draw
    (4.5,0) node {\Large $\frac{-1}{2}$}
 -- (5.5,0) node {\Large $\frac{-1}{2}$}
-- (5.5,-1) node {\Large $\frac{-1}{2}$}
-- (4.5,-1) node {\Large $\frac{-1}{2}$}
 -- cycle;

\draw
    (6.65,0) 
 -- (6.85,0) 
-- (7,-1) 
-- (6.75,-1) node {\Large $-2$}
-- (6.5,-1) 
 -- cycle;
\end{tikzpicture} 
\end{center}
Then the union of $3$-block, $2$-block and $1$-block correspond to the highlighted coordinates of $\mu(\delta)$:
$$\mu(\delta) = ({\bf 2,2,2,2}, 2,2,2\ |\ {\bf 0}, -3,-3,-4).$$
Since $({\bf 2+1,2,2,2},2,2,2\ |\ {\bf 0-1},-3,-3,-4) = ({\bf 3,2,2,2},2,2,2\ |\ {\bf -1}, -3,-3,-4)$
is a valid dominant $K$-weight,
$X$ is $\mathfrak{p}^+$-bottom layer on the union of $3$-to-$1$-blocks. 

This can also be seen by looking at
the leftmost block ($3$-block) and the rightmost block ($1$-block):
\begin{center}
\begin{tikzpicture}
\draw
    (0,0) 
 -- (0.25,0) node {\Large $3$}
-- (0.5,0) 
-- (0.35,-1) 
-- (0.15,-1) 
 -- cycle;

\draw
    (1,0) 
 -- (1.25,0) node {\Large $2$}
-- (1.5,0) 
-- (1.35,-1) 
-- (1.15,-1) 
 -- cycle;

\draw
    (2,0) node {\Large $1$}
 -- (3,0) node {\Large $1$}		
 -- (2.7,-1) 
-- (2.5,-1) node {\Large $1$}
-- (2.3,-1) 
 -- cycle;

\draw
    (4,0) 
 -- (4.25,0) node {\Large $0$}
-- (4.5,0) 
-- (4.35,-1) 
-- (4.15,-1) 
 -- cycle;
\end{tikzpicture} 
\end{center} 
Here the $3$-block satisfies the top-left hypothesis of 
Proposition \ref{prop-bottom}(a) (since there are no $\gamma$-blocks on the left of the $3$-block), and the 
$1$-block satisfies the bottom-right hypothesis of Proposition \ref{prop-bottom}(a). 

As a consequence, if the combinatorial sub-datum of $X$ with $\lambda_a$-blocks equal to \begin{tikzpicture}
\draw
    (0,0) 
 -- (0.125,0) node {$3$}
-- (0.25,0) 
-- (0.175,-0.5) 
-- (0.075,-0.5) 
 -- cycle;

\draw
    (0.5,0) 
 -- (0.625,0) node {$2$}
-- (0.75,0) 
-- (0.675,-0.5) 
-- (0.575,-0.5) 
 -- cycle;

\draw
    (1,0) node {$1$}
 -- (1.5,0) node {$1$}		
 -- (1.35,-0.5) 
-- (1.25,-0.5) node {$1$}
-- (1.15,-0.5) 
 -- cycle;
\end{tikzpicture} corresponds to a non-unitary $U(4,1)$-module up to level $\mathfrak{p}^+$, then so is $X$.
\end{example}

\section{Fundamental Case} \label{sec-fund}
\begin{definition} \label{def-fund}
    Let $G = U(p,q)$. A $\lambda_a$-datum or a combinatorial $\theta$-stable datum is called {\bf fundamental} if the content of all its neighbouring $\lambda_a$-blocks have differences $\leq 1$. 
    
    An irreducible, Hermitian $(\mathfrak{g},K)$-module $\Pi$ with real infinitesimal character is a {\bf fundamental module} if its corresponding  combinatorial $\theta$-stable datum is fundamental.
\end{definition}
For instance, all irreducible modules with $\lambda_a$-datum
equal to that of Example \ref{eg-barbasch} is not fundamental, since the content of the second last $\gamma$-block ($\frac{-1}{2}$) and the content of the last $\gamma$-block ($-2$) have difference $\frac{3}{2} > 1$.

\smallskip
As mentioned in the introduction, fundamental representations play an indispensable role in the proof of Conjecture \ref{conj-original} and Conjecture \ref{conj-fpp} for $U(p,q)$. Indeed, both conjectures rely on the validity of the following theorem, whose proof with take up the rest of this section:
\begin{theorem} \label{thm-upq}
   Let $\Pi$ be a fundamental module with
   infinitesimal character $\Lambda$. If $\Pi$  is unitary, then $\Lambda$ satisfies $\langle \Lambda, \alpha^{\vee} \rangle \leq 1$ for all simple roots $\alpha \in \Delta^+(\mathfrak{g},\mathfrak{h})$. Otherwise, $\Pi$ is not unitary up to level $\mathfrak{p}$ (Definition \ref{def-upto}). 
\end{theorem}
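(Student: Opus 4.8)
The plan is to prove the contrapositive: assuming $\Lambda$ violates $\langle \Lambda, \alpha^\vee\rangle \le 1$ for some simple root $\alpha$, I will exhibit a $\gamma$-block (or a union of neighbouring $\gamma$-blocks) in the combinatorial $\theta$-stable datum of $\Pi$ on which $\Pi$ is $\mathfrak{p}^{\pm}$-bottom layer and whose associated $U(r,s)$-representation is non-unitary up to level $\mathfrak{p}^{\pm}$; by Corollary \ref{cor-bottom} (and Remark \ref{rmk-blocks}), this forces $\Pi$ itself to be non-unitary up to level $\mathfrak{p}$. First I would translate the condition $\langle\Lambda,\alpha^\vee\rangle \le 1$ into the combinatorial language of the datum: since $\Lambda = (\lambda_a(\mu),\nu)$ and the $\nu$-coordinates attached to each $\gamma$-block record the non-compact directions, a violation $\langle\Lambda,\alpha^\vee\rangle > 1$ must come from a large $\nu$-coordinate within a single block, or from two adjacent blocks whose contents differ by $\le 1$ (the fundamental hypothesis) but whose combined $\nu$- and content-data produce a coordinate difference exceeding $1$. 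The fundamental hypothesis is exactly what guarantees that such a violation is \emph{localized}: it cannot arise from a large jump between contents of neighbouring blocks, so it must be visible inside one block or one union of consecutive blocks all of whose pairwise content-differences are $\le 1$.

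Next I would reduce to the single-block (or single-union) case as follows. Using the bottom-layer criterion of Proposition \ref{prop-bottom} together with Remark \ref{rmk-blocks}, I would isolate the maximal union $U$ of consecutive blocks containing the offending root $\alpha$, and check the shape conditions at the top-left/bottom-right (for $\mathfrak{p}^+$) or top-right/bottom-left (for $\mathfrak{p}^-$) corners of $U$. Because $\Pi$ is fundamental, the block immediately outside $U$ either does not exist or has content differing by more than $1$ from the boundary block of $U$; in the latter situation one of the two bottom-layer corner-conditions is automatically satisfied (the "forbidden" corner configurations in Proposition \ref{prop-bottom} all involve a neighbour whose content differs by exactly $\tfrac12$ or $1$). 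One then checks the corner at the other end; if that too is bottom-layer we are reduced to a genuine $U(r,s)$ subproblem, and if not, I would argue that the obstruction migrates to a smaller sub-union on which the analysis restarts — an induction on the number of blocks. The base case is then a single $\gamma$-block of size $(r,r)$, $(r,r\pm1)$, i.e. an irreducible representation of $U(r,s)$ with $|r-s|\le 1$ whose infinitesimal character violates the $\langle\cdot,\alpha^\vee\rangle\le 1$ bound.

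For that base case I would invoke Parthasarathy's Dirac inequality (Theorem \ref{thm-partha}) directly, possibly after the deformation argument sketched at the end of Section \ref{sec-nonunit}: fix the lowest $K$-type(s) $\delta$ of the single-block module, form the family $X(t)$ obtained by scaling the large $\nu$-coordinate, note that the $K$-types up to level $\mathfrak{p}^{\pm}$ have $t$-independent multiplicities, apply Corollary \ref{cor-jantzen} to conclude the signature on those $K$-types is constant in $t$, and then use \eqref{eq-dirac}/\eqref{eq-dirach} for $t\to\infty$ (where $\|\Lambda\|$ grows while the left-hand side of the Dirac inequality stays comparatively small) to detect an indefinite form on $\delta\otimes\mathfrak{p}^{\pm}$. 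I expect the main obstacle to be the bookkeeping in the reduction step: verifying in complete generality that a violation of $\langle\Lambda,\alpha^\vee\rangle\le 1$ in a fundamental datum always localizes to a union of blocks on which $\Pi$ is bottom-layer — the fundamental hypothesis makes this plausible, but the case analysis over the possible shapes (rectangle, two parallelogram orientations, two trapezoid orientations) and over which coordinate of $\Lambda$ the simple root $\alpha$ pairs with will be delicate. A secondary difficulty is making the Dirac-inequality estimate \eqref{eq-dirach} explicit enough in the single-block $U(r,s)$ case to conclude non-unitarity precisely when $\langle\Lambda,\alpha^\vee\rangle > 1$ and not merely for much larger infinitesimal character; this likely requires the sharp form of the estimate from \cite{SR88} specialized to these small unitary groups.
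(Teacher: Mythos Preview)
Your overall strategy---localize the $>1$ gap in $\Lambda$ to a piece of the datum, verify a bottom-layer condition there, then in the base case run a deformation-plus-Dirac argument---is indeed what the paper does. But two genuine problems prevent your outline from going through.

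First, you have inverted the meaning of \emph{fundamental}. You write that ``because $\Pi$ is fundamental, the block immediately outside $U$ either does not exist or has content differing by more than $1$ from the boundary block of $U$.'' Definition~\ref{def-fund} says exactly the opposite: in a fundamental datum \emph{every} pair of neighbouring blocks has content difference $\le 1$. So your mechanism for guaranteeing the bottom-layer corner conditions (``the forbidden configurations all involve a neighbour whose content differs by exactly $\tfrac12$ or $1$'') is unavailable---those forbidden neighbours are precisely the ones that \emph{do} occur in a fundamental datum. The paper deals with this differently: in its Case~(a) the bottom-layer condition can genuinely fail on the rectangular $\gamma$-block, and one must then observe that a neighbouring \emph{parallelogram} block lies strictly inside the gap $(\lambda_{i+1},\lambda_i)$ with all $\nu$-entries nonzero, reducing to the parallelogram analysis (Proposition~\ref{prop-parallel}). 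Your ``migration to a smaller sub-union'' gesture is in the right spirit but does not identify this specific reduction.

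Second, and more seriously, your base case is too small. You reduce to ``a single $\gamma$-block of size $(r,r)$ or $(r,r\pm1)$,'' but the paper's Case~(b)---where the gap in $\Lambda$ lies \emph{outside} the range of $\lambda_a$, i.e.\ a $\nu$-coordinate of some block overshoots all the $\lambda_a$-contents---cannot be handled one block at a time. There the paper must introduce the \emph{semi-spherical component} (Definition~\ref{def-semisph}): a maximal chain of blocks ending at the $\lambda$-large block, of prescribed adjacent shapes, and prove non-unitarity for the whole chain (Theorem~\ref{thm-semipm}). The deformation argument for this chain requires tracking a factorization of the long intertwining operator through $W(BC_q)$ and checking, case by case for four types of simple reflections, that no kernel appears at level $\mathfrak p^+$; the type-(ii) reflection in particular needs a character-formula argument for $U(p-q+1,1)$. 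None of this is visible from a single-block analysis, and your proposal does not distinguish Case~(a) from Case~(b) at all.
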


To begin with, let $\Pi$ be a fundamental module such that there is a `gap' in its infinitesimal character $\Lambda$:
$$\Lambda = (\dots \geq \lambda_i > \lambda_{i+1} \geq \dots), \quad \quad \lambda_i - \lambda_{i+1} > 1.$$ 

Let $\alpha$ and $\omega$ be the largest and smallest coordinates of $\lambda_a$, there are three possibilities for the position of the $>1$ gap:
\begin{itemize}
\item[(a)] $\alpha \geq \lambda_i, \lambda_{i+1} \geq \omega$. In other words, the gap occurs \emph{within $\lambda_a$};
\item[(b)] $\lambda_i > \alpha$;
\item[(b')] $\omega \geq \lambda_{i+1}$. 
\end{itemize}
In Section \ref{subsec-casea}, we prove Theorem \ref{thm-upq} for Case (a). By symmetry, we will only consider for Case (b) in Section \ref{subsec-caseb}.

\subsection{Proof of Theorem \ref{thm-upq} - Case (a)} \label{subsec-casea}
Since $\Pi$ is fundamental, the fact that $\lambda_i - \lambda_{i+1}$ implies that there must be at least one $\gamma$-block in the $\lambda_a$-datum of $X$ satisfying $\lambda_i \geq \gamma \geq \lambda_{i+1}$. Since there are no coordinates in between  $\lambda_i$ and $\lambda_{i+1}$, the $\gamma$-block must be a rectangle or parallelogram of the form

\begin{center}
\begin{tikzpicture}
\draw (-1.25,-0.5) node {\Large $\dots \dots$};
\draw (2.75,-0.5) node {\Large $\dots \dots$};

\draw
    (0,0) node {\Large $\gamma$}
 -- (0.75,0) node {\Large $\ldots$}
 -- (1.5,0) node {\Large $\gamma$}
-- (1.5,-1) node {\Large $\gamma$}
-- (0.75,-1) node {\Large $\ldots$}
-- (0,-1) node {\Large $\gamma$}
 -- cycle;

\draw[arrows = {-Stealth[]}]          (0.2,0)   to [out=90,in=90]node[above]{} (-1.3,0);
\draw[arrows = {-Stealth[]}]          (1.3,0)   to [out=90,in=90]node[above]{} (2.8,0);

\filldraw[black] (-1,-1) circle (2pt) node[below]{\Large $\lambda_i$};
\filldraw[black] (2.5,-1) circle (2pt) node[below]{\Large $\lambda_{i+1}$};

\end{tikzpicture} 
\quad or \quad
\begin{tikzpicture}

\draw (-1.5,-0.5) node {\Large $\dots \dots$};
\draw (2.5,-0.5) node {\Large $\dots \dots$};

\draw
    (0,0) node {\Large $\gamma$}
 -- (0.75,0) node {\Large $\ldots$}
 -- (1.5,0) node {\Large $\gamma$}
-- (1,-1) node {\Large $\gamma$}
-- (0.25,-1) node {\Large $\ldots$}
-- (-0.5,-1) node {\Large $\gamma$}
 -- cycle;

\draw[arrows = {-Stealth[]}]          (0.2,0)   to [out=90,in=90]node[above]{} (-1.3,0);
\draw[arrows = {-Stealth[]}]          (1.3,0)   to [out=90,in=90]node[above]{} (2.8,0);

\filldraw[black] (-1,-1) circle (2pt) node[below]{\Large $\lambda_i$};
\filldraw[black] (2.5,-1) circle (2pt) node[below]{\Large $\lambda_{i+1}$};
\end{tikzpicture}
\quad or \quad
\begin{tikzpicture}
\draw (-1,-0.5) node {\Large $\dots \dots$};
\draw (3,-0.5) node {\Large $\dots \dots$};

\draw
    (0,0) node {\Large $\gamma$}
 -- (0.75,0) node {\Large $\ldots$}
 -- (1.5,0) node {\Large $\gamma$}
-- (2,-1) node {\Large $\gamma$}
-- (1.25,-1) node {\Large $\ldots$}
-- (0.5,-1) node {\Large $\gamma$}
 -- cycle;

\draw[arrows = {-Stealth[]}]          (0.2,0)   to [out=90,in=90]node[above]{} (-1.3,0);
\draw[arrows = {-Stealth[]}]          (1.3,0)   to [out=90,in=90]node[above]{} (2.8,0);

\filldraw[black] (-1,-1) circle (2pt) node[below]{\Large $\lambda_i$};
\filldraw[black] (2.5,-1) circle (2pt) node[below]{\Large $\lambda_{i+1}$};
\end{tikzpicture}
\end{center}
Note that the arrows goes beyond $\lambda_i$ and $\lambda_{i+1}$, that is, the $\nu$-values
attached to the above $\gamma$-blocks must be of the form 
\begin{equation} \label{eq-nubig}
(\nu_1 \geq \dots \geq \nu_r \geq -\nu_r \geq \dots \geq -\nu_1),\quad \quad \nu_r \geq \max\{\lambda_i - \gamma, \gamma - \lambda_{i+1}\} > \frac{1}{2}.
\end{equation}

\begin{proposition} \label{prop-parallel}
Let $G' := U(r,r)$, and $\Theta$ be an irreducible $(\mathfrak{g}',K')$-module corresponding to a single $\lambda_a$-parallelogram $\gamma$-block with $\nu$ of the form \eqref{eq-nubig}. Then $\Theta$ has two lowest $K$-types of highest weights
$$\mu_{+} = (a+1, \dots, a+1\ |\ a, \dots, a), \quad \quad \mu_{-} = (a, \dots, a\ |\ a+1, \dots, a+1),$$
with $a = \gamma - \frac{1}{2}$, and is non-unitary on the $\mathfrak{p}^{\pm}$-level. More precisely, the form is indefinite on
these two pairs of $K$-types with highest weights:
\begin{equation} \label{eq-mupmx}
\{\mu_{+}, (a+1, \dots, a+1, a\ |\ a+1, \dots, a)\}, \quad \quad \{\mu_{-}, (a+1,a, \dots, a\ |\ a+1, \dots, a+1,a)\}.
\end{equation}
\end{proposition}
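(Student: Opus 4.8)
The plan is to realize $\Theta$ as a member of a one-parameter family of principal series representations and combine the Jantzen filtration (Corollary \ref{cor-jantzen}) with Parthasarathy's Dirac inequality (Theorem \ref{thm-partha}) on the Hermitian symmetric side. First I would write down the $K$-spectrum of $\Theta$ explicitly. Since the combinatorial datum is a single parallelogram $\gamma$-block of size $(r,r)$ with $\nu = (\nu_1 \geq \dots \geq \nu_r \geq -\nu_r \geq \dots \geq -\nu_1)$, the representation is an irreducible subquotient of $Ind^{G'}_{T'A'N'}(\det^{2\gamma} \boxtimes \nu \boxtimes 1)$ (using the reduction of Section \ref{sec-upq}); its two lowest $K$-types are $V_{\mu_+}$ and $V_{\mu_-}$ with $\mu_{\pm}$ as stated, because the parallelogram shape forces exactly the two choices in \eqref{eq-mupm}. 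I would check via the $\lambda_a$-datum-to-$\mu$ algorithm of Proposition \ref{prop-khat} that the $K$-types in \eqref{eq-mupmx} are precisely the ones appearing in $V_{\mu_\pm} \otimes \mathfrak{p}^{\mp}$ (one box moved across the wall), so that "indefinite on these pairs" is exactly the assertion "non-unitary up to level $\mathfrak{p}^{\pm}$" in the sense of Definition \ref{def-upto}.

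Next I would set up the deformation. Consider $\Theta(t)$ corresponding to the same parallelogram $\gamma$-block but with $\nu$-coordinates scaled so that $\nu_r(t) \to \infty$ while the ordering and the inequality $\nu_r(t) > \max\{\lambda_i - \gamma,\gamma-\lambda_{i+1}\}$ is maintained (concretely, translate all $\nu_j$ by $+t$); the lowest $K$-types $V_{\mu_\pm}$ and the level-$\mathfrak{p}^\pm$ $K$-types in \eqref{eq-mupmx} persist with the same multiplicities for all $t \geq 0$, since the $\lambda_a$-datum (hence $\delta$) does not change. Applying Corollary \ref{cor-jantzen} to each of these four $K$-types — they occur with multiplicity one in $\Theta(t)$ and, one checks, with the same multiplicity in the unique irreducible submodule $J(t_i)$ at each reducibility point — shows the signature $(p^{\Theta(t)},q^{\Theta(t)})$ on each of them is constant in $t$. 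The key point to verify here is that these $K$-types are not "lost" to other composition factors at the reducibility points; this should follow from a Langlands-quotient/lowest-$K$-type count, since $V_{\mu_\pm}$ are the lowest $K$-types and the $\chi$'s in \eqref{eq-mupmx} sit one $\mathfrak{p}^{\mp}$-step above them.

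Then for $t$ large I would apply Parthasarathy's Dirac inequality in its Hermitian-symmetric form \eqref{eq-dirach}: the infinitesimal character $\Lambda(t) = (\gamma,\dots,\gamma \mid \gamma,\dots,\gamma) + (\nu(t) ; -\nu(t))$-shuffle has norm $\to \infty$, so for $t \gg 0$ one of $V_{\mu_+}$, $V_{\mu_-}$ satisfies $\|\{\mu - \rho(\Delta(\mathfrak{p}^{\pm}))\} + \rho(\Delta(\mathfrak{k}))\| < \|\Lambda(t)\|$ for the appropriate sign; by the theorem this forces the form to be indefinite on level $\mathfrak{p}^{\mp}$, i.e. opposite signs on $V_{\mu_\pm}$ and some $K$-type in $V_{\mu_\pm}\otimes\mathfrak{p}^{\mp}$, which by the multiplicity-one analysis must be one of the $\chi$'s in \eqref{eq-mupmx}. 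Since signatures on all four $K$-types are $t$-independent, the indefiniteness descends to $t=0$, giving the claim for $\Theta = \Theta(0)$. I would run the argument for both $\mathfrak{p}^+$ and $\mathfrak{p}^-$ (the two are interchanged by the outer symmetry swapping the two factors of $U(r,r)$, so really only one case needs detailed treatment).

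The main obstacle I anticipate is the bookkeeping in the middle step: verifying that at every reducibility point $t_i$ the unique irreducible submodule $J(t_i)$ contains each of the four distinguished $K$-types with the same multiplicity as $\Theta(t)$ itself — equivalently, that none of these $K$-types migrates into a non-lowest-$K$-type composition factor. This requires knowing the reducibility points and the $K$-type content of the Langlands subquotients of the relevant $U(r,r)$ principal series, which is classical (via $R$-groups, cf. \cite{V79}, as invoked in Example \ref{eg-lkts}) but needs to be stated carefully. Once that is in place, the Dirac inequality estimate for large $t$ is a routine norm computation, since scaling $\nu$ dominates the fixed $\rho$-shifts.
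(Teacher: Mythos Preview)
Your overall strategy matches the paper's exactly: deform $\nu \mapsto \nu + t$, invoke Corollary~\ref{cor-jantzen} to freeze the signatures on the four distinguished $K$-types, and apply Parthasarathy's Dirac inequality (Theorem~\ref{thm-partha}) for $t \gg 0$. The identification of the lowest $K$-types and the level-$\mathfrak{p}^{\mp}$ partners is also correct.

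The obstacle you flag in your final paragraph is real, and your proposed resolution (``a Langlands-quotient/lowest-$K$-type count \dots\ via $R$-groups'') is too vague to work directly on the full principal series: the reducibility points of $Ind^{G'}_{T'A'N'}(\det^{2\gamma}\boxtimes \nu(t)\boxtimes 1)$ are numerous, and there is no soft reason why the level-$\mathfrak{p}^{\pm}$ $K$-types cannot leak into other composition factors. The paper resolves this with a concrete device you are missing. Rather than working with the full principal series, one passes to the intermediate induced module
\[
I(t) := Ind_{GL(r,\mathbb{C})N}^{G'}\bigl(J_r(\gamma+\nu_1+t,\dots,\gamma+\nu_r+t;\,\gamma-\nu_1-t,\dots,\gamma-\nu_r-t)\boxtimes 1\bigr),
\]
where $J_r$ is the irreducible $GL(r,\mathbb{C})$-module with the indicated Zhelobenko parameter. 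Since the inducing datum is irreducible for all $t$, the $K'$-type multiplicities of $I(t)$ are constant in $t$ (by \cite[Proposition 12.2]{V86}). One then factors the long intertwining operator as $\iota(w_0)=\iota(w')\circ\iota(w_L)$ with $\mathrm{im}(\iota(w_L))=I(t)$, and decomposes $\iota(w')$ into rank-one pieces coming from the long root $s_{long}$ of $C_r$ and the short permutations $\sigma_i$. The hypothesis $\nu_r > \tfrac{1}{2}$ from \eqref{eq-nubig} is exactly what guarantees that each of these rank-one operators (computed in $U(1,1)$ and $GL(2,\mathbb{C})$ respectively) has no kernel on the level of $\mathfrak{p}^{\pm}$: the $U(1,1)$ operator degenerates only at $\nu_x+t=\tfrac{1}{2}$, and the $GL(2,\mathbb{C})$ operator only at $\nu_x+\nu_v+2t=1$, both excluded. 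This gives $I(t)$ and $\Theta(t)$ the same multiplicities and signatures up to level $\mathfrak{p}^{\pm}$, which is precisely the hypothesis needed for Corollary~\ref{cor-jantzen}. Without this intermediate-induction step and the explicit intertwining-operator analysis, your argument has a genuine gap at the point you yourself identified.
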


\begin{proof}
The first statement is immediate from Example \ref{eg-lkts}. As for the second statement,
suppose $\nu$ be of the form \eqref{eq-nubig}. Let 
$$\nu(t) := (\nu_1+t, \dots, \nu_r+t,  - \nu_r-t, \dots, \gamma - \nu_1-t)$$ 
for all $t \geq 0$, and $\Theta(t)$
be the irreducible $(\mathfrak{g}',K')$-module with the same $\lambda_a$-parallelogram $\gamma$-block with $\nu$-coordinates given by $\nu(t)$. Consider the induced module
$$I(t) := Ind_{GL(r,\mathbb{C})N}^{G'}\left(J_r(\gamma + \nu_1+t, \dots, \gamma + \nu_r+t; \gamma - \nu_1-t, \dots, \gamma - \nu_r-t) \boxtimes 1\right).$$
where $J_n(\lambda_L;\lambda_R)$ is the irreducible representation of $GL(n,\mathbb{C})$ with
Zhelobenko parameter $(\lambda_L;\lambda_R)$ (which is also the infinitesimal character of the module).

\medskip
\noindent {\bf Claim 1: For all $t \geq 0$, $I(t)$ has $\Theta(t)$ as the irreducible quotient.} Indeed, the standard module $X(\gamma, \nu(t))$ corresponding to $\Theta(t)$
(given by \cite[Theorem 4.3]{SRV98} for instance) has restricted root system $\Delta(\mathfrak{g}',\mathfrak{a}')$ of Type $C_r$
corresponding to the split non-compact Cartan subalgebra $\mathfrak{a}_0' \cong \mathbb{R}^r$.
Then $\Theta(t)$ is the image of the long intertwining operator 
$$\iota(w_0): X(\gamma, \nu(t)) \longrightarrow X(w_0 \cdot \gamma,w_0\cdot \nu(t)).$$
for the longest Weyl group element $-1 = w_0 \in W(C_r)$. 

We split $w_0 = -1$ into the following parts:
\begin{align*}
&(\nu_1+t,  \nu_2+t, \nu_3 + t, \dots, \nu_r+t)\\
\stackrel{w_{GL}}{\longrightarrow}\  &(\nu_r+t, \dots, \nu_3+t, \nu_2+t, \nu_1 + t)  \stackrel{s_{long}}{\longrightarrow}  (\nu_r+t, \dots, \nu_3 + t, \nu_2+t, -\nu_1 - t) \\
\stackrel{\sigma_1}{\longrightarrow}\ &(-\nu_1-t, \nu_r+t, \dots, \nu_3 + t, \nu_2 + t)  \stackrel{s_{long}}{\longrightarrow}   (-\nu_1-t, \nu_r+t, \dots \nu_3 + t, -\nu_2 - t) \\
\stackrel{\sigma_2}{\longrightarrow}\  &(-\nu_1-t, -\nu_2 - t, \nu_r+t, \dots, \nu_3 + t)  \stackrel{s_{long}}{\longrightarrow}    (-\nu_1-t, -\nu_2 - t, \nu_r+t, \dots, -\nu_3 - t)\\
&\dots \\
\stackrel{\sigma_{r-1}}{\longrightarrow}\  &(-\nu_1-t, -\nu_2-t, \dots, -\nu_{r-1}-t,\nu_r + t)  \stackrel{s_{long}}{\longrightarrow}   (-\nu_1-t, \dots -\nu_{r-1} - t, -\nu_r - t) 
\end{align*}
Since $w_{GL}$ is the long Weyl group element of the Levi subgroup $W(A_{r-1}) \leq W(C_r)$, we have $\mathrm{im}(\iota(w_{GL})) = I(t)$ by induction
in stages. Therefore, $I(t)$ inherits the Hermitian form of the standard module, and has irreducible quotient $\Theta(t)$. 

\medskip
\noindent {\bf Claim 2: $I(t)$ and $\Theta(t)$ has the same multiplicities and signatures up to level $\mathfrak{p}^{\pm}$.} This amounts to showing the intertwining operators 
$\iota(s_{long})$ and $\iota(\sigma_i)$ for $s_{long}$ and $\sigma_i$ appearing above have no kernel up to level $\mathfrak{p}^{\pm}$.

\smallskip
We study $\iota(s_{long})$ first. By induction in stages, it amounts to studying the intertwining operator:
$$Ind_{GL(1,C)N}^{U(1,1)}(J_1(\gamma + \nu_i+t; \gamma - \nu_i-t) \boxtimes 1) \longrightarrow Ind_{GL(1,C)N}^{U(1,1)}(J_1(\gamma - \nu_i-t; \gamma + \nu_i+t) \boxtimes 1).$$
for all $i$. Indeed, the above operator has kernel on the level of $\mathfrak{p}^{\pm}$ if and only if $\nu_i = \frac{1}{2}$, so that the image is a unitary character of $U(1,1)$
(see also \cite[Proposition 7.4]{BJ90b}) . Since all $\nu_i+t > \frac{1}{2}$ by our hypothesis, so $\iota(s_{long})$ has no kernel on the level of $\mathfrak{p}^{\pm}$.

Similarly, $\iota(\sigma_i)$ is a composition of intertwining operators induced from operators of the form
\begin{align*}
&Ind_{GL(1,\mathbb{C}) \times GL(1,\mathbb{C})}^{GL(2,\mathbb{C})}(J_1(\gamma + \nu_i+t; \gamma - \nu_i-t) \boxtimes J_1(\gamma - \nu_j-t; \gamma + \nu_j+t)) \\
\longrightarrow\ &Ind_{GL(1,\mathbb{C}) \times GL(1,\mathbb{C})}^{GL(2,\mathbb{C})}(J_1(\gamma - \nu_j-t; \gamma + \nu_j+t) \boxtimes J_1(\gamma + \nu_i+t; \gamma - \nu_i-t) ).
\end{align*}
The lowest $U(2)$-type of the above module has highest weight $(2\gamma,2\gamma)$. As before, the above operator has kernel on level $\mathfrak{p}$ (i.e. the $U(2)$-type with highest weight $(2\gamma+1,2\gamma-1)$) if and only if $(\gamma + \nu_i +t) - (\gamma-\nu_j -t) = \nu_i + \nu_j +2t = 1$, which is again impossible by our hypothesis, so 
 $\iota(\sigma)$ also has no kernel on the level of $\mathfrak{p}^{\pm}$ for all $i$, and the claim is proved.

\medskip
We are now in the position to prove the proposition. By the classification of irreducible representations of $GL(n,\mathbb{C})$ (for instance \cite[Proposition 12.2]{V86}), all $I(t)$ 
have the same $K'$-type multiplicities. Then Claim 2 and Corollary \ref{cor-jantzen} implies that all $\Theta(t)$ has the same multiplicities and signatures up to level $\mathfrak{p}^{\pm}$. However, when $t >> 0$ is large, the two pairs of $K'$-types in \eqref{eq-mupmx} satisfy the Parthasarathy's inequality in Theorem \ref{thm-partha}. Consequently, $\Theta(t)$ as well as
$\Theta(0) = \Theta$ has indefinite Hermitian forms on these $K'$-types.
\end{proof}

\begin{remark} \label{rmk-parallel}
In fact, if $r$ is odd in Proposition \ref{prop-parallel}, then $\Theta$ has indefinite forms on the lowest $K$-types with highest weights
$\{\mu_+, \mu_-\}$. This follows from \cite[Theorem 11.2]{AvLTV20} --
namely, one can take $x = \mathrm{diag}(I_r, -I_r)$, in \cite[Definition 11.1]{AvLTV20}, so that
$\epsilon(\mu_{\pm}) := \mu_{\pm}(x) = \det^a(-I_r) = (-1)^{ar}$ or $\det^{a+1}(-I_r) = (-1)^{(a+1)r}$, which have different signs if $r$ is odd.
\end{remark}

Now we study the representation corresponding to a single rectangular $\gamma$-block with $\nu$ coordinates
satisfying \eqref{eq-nubig}. Since the group $G' := U(r,r)$ is quasisplit, 
and the lowest $K'$-type $\delta$ is trivial on the semisimple part, one can apply \cite{BJ90a} 
(a more general result is given in \cite{B10}), or simply copy the proof of Proposition \ref{prop-parallel} above to conclude 
that the Hermitian form is indefinite on the level of $\mathfrak{p}^{\pm}$:
\begin{equation} \label{eq-pplus1}
\mu(\delta) = (a,\dots, a | a, \dots, a), \quad (a+1, a, \dots, a | a, \dots, a, a-1)
\end{equation}
and
\begin{equation} \label{eq-pminus1}
\mu(\delta) = (a,\dots, a | a, \dots, a), \quad (a, \dots, a, a-1 |a+1, a, \dots, a)
\end{equation} 
with $a = \gamma$.

We now go back to studying the 3 cases in the beginning of Section \ref{subsec-casea}. The last two parallelogram cases,
i.e. 
\begin{center}
\begin{tikzpicture}
\draw (-1.5,-0.5) node {\Large $\dots \dots$};
\draw (2.5,-0.5) node {\Large $\dots \dots$};

\draw
    (0,0) node {\Large $\gamma$}
 -- (0.75,0) node {\Large $\ldots$}
 -- (1.5,0) node {\Large $\gamma$}
-- (1,-1) node {\Large $\gamma$}
-- (0.25,-1) node {\Large $\ldots$}
-- (-0.5,-1) node {\Large $\gamma$}
 -- cycle;

\draw[arrows = {-Stealth[]}]          (0.2,0)   to [out=90,in=90]node[above]{} (-1.3,0);
\draw[arrows = {-Stealth[]}]          (1.3,0)   to [out=90,in=90]node[above]{} (2.8,0);

\filldraw[black] (-1,-1) circle (2pt) node[below]{\Large $\lambda_i$};
\filldraw[black] (2.5,-1) circle (2pt) node[below]{\Large $\lambda_{i+1}$};
\end{tikzpicture}
\quad, or \quad
\begin{tikzpicture}
\draw (-1,-0.5) node {\Large $\dots \dots$};
\draw (3,-0.5) node {\Large $\dots \dots$};

\draw
    (0,0) node {\Large $\gamma$}
 -- (0.75,0) node {\Large $\ldots$}
 -- (1.5,0) node {\Large $\gamma$}
-- (2,-1) node {\Large $\gamma$}
-- (1.25,-1) node {\Large $\ldots$}
-- (0.5,-1) node {\Large $\gamma$}
 -- cycle;

\draw[arrows = {-Stealth[]}]          (0.2,0)   to [out=90,in=90]node[above]{} (-1.3,0);
\draw[arrows = {-Stealth[]}]          (1.3,0)   to [out=90,in=90]node[above]{} (2.8,0);

\filldraw[black] (-1,-1) circle (2pt) node[below]{\Large $\lambda_i$};
\filldraw[black] (2.5,-1) circle (2pt) node[below]{\Large $\lambda_{i+1}$};
\end{tikzpicture}
\end{center}
are indeed isomorphic by our choice of $\nu$ in \eqref{eq-nubig}
and Example \ref{eg-lkts}. By Proposition \ref{prop-bottom}, the left $\gamma$-block is $\mathfrak{p}^+$-bottom layer (Definition \ref{def-bottom}) and the right $\gamma$-block is
$\mathfrak{p}^-$-bottom layer. By applying Corollary \ref{cor-bottom} to 
one of the two pairs of indefinite $K'$-types in Equation \ref{eq-mupmx} of Proposition \ref{prop-parallel}, 
one concludes that $\Pi$ is non-unitary on the level of $\mathfrak{p}^{\pm}$.

\medskip
In the rectangular $\gamma$-block case, suppose the combinatorial $\theta$-stable data of $\Pi$ looks like:

\begin{center}
\begin{tikzpicture}
\draw
    (-1.5,0) node {\Large $\dots$}
 -- (-0.5,0) node {\Large $\gamma'$}
 -- (-1,-1) node {\Large $\gamma'$}
-- (-2,-1) node {\Large $\ldots$};

\draw
    (0,0) node {\Large $\gamma$}
 -- (0.75,0) node {\Large $\ldots$}
 -- (1.5,0) node {\Large $\gamma$}
 -- (1.5,-1) node {\Large $\gamma$}
-- (0.75,-1) node {\Large $\ldots$}
-- (0,-1) node {\Large $\gamma$}
 -- cycle;

\draw[arrows = {-Stealth[]}]          (0.2,0)   to [out=90,in=90]node[above]{} (-2.5,0);
\draw[arrows = {-Stealth[]}]          (1.3,0)   to [out=90,in=90]node[above]{} (4,0);

\draw
    (2.75,0) node {\Large $\ldots$}
-- (2,0) node {\Large $\gamma''$}
-- (2,-1) node {\Large $\gamma''$}
-- (2.75,-1) node {\Large $\ldots$};



\end{tikzpicture} 
\end{center}
Then $\Pi$ is $\mathfrak{p}^-$-bottom layer on the 
$\gamma$-block by Proposition \ref{prop-bottom}. On the other hand, since the 
$\gamma$-block corresponds to a Hermitian representation whose signatures are
indefinite on the level of $\mathfrak{p}^-$ by \eqref{eq-pminus1}. By Corollary \ref{cor-bottom}, 
one concludes that $\Pi$ is non-unitary on the level of $\mathfrak{p}^-$.

As a consequence, the only case where the Corollary \ref{cor-bottom} fails to detect non-unitarity 
of $\Pi$ is when the $\lambda_a$-blocks are of the form
\begin{center}
\begin{tikzpicture}
\draw
    (-1.5,0) node {\Large $\dots$}
 -- (-0.5,0) node {\Large $\gamma'$}
 -- (-1,-1) node {\Large $\gamma'$}
-- (-2,-1) node {\Large $\ldots$};

\draw [|-|](-1,-1.5) -- node[below] {$\frac{1}{2}$} (0,-1.5);
\draw [|-|](1.5,-1.5) -- node[below] {$\frac{1}{2}$} (2.5,-1.5);

\draw
    (0,0) node {\Large $\gamma$}
 -- (0.75,0) node {\Large $\ldots$}
 -- (1.5,0) node {\Large $\gamma$}
 -- (1.5,-1) node {\Large $\gamma$}
-- (0.75,-1) node {\Large $\ldots$}
-- (0,-1) node {\Large $\gamma$}
 -- cycle;

\draw[arrows = {-Stealth[]}]          (0.2,0)   to [out=90,in=90]node[above]{} (-2.5,0);
\draw[arrows = {-Stealth[]}]          (1.3,0)   to [out=90,in=90]node[above]{} (4,0);

\draw
    (2.75,0) node {\Large $\ldots$}
-- (2,0) node {\Large $\gamma''$}
-- (2.5,-1) node {\Large $\gamma''$}
-- (3.25,-1) node {\Large $\ldots$};



\end{tikzpicture} 
\end{center}
(or flipped upside down), where $\gamma' - \gamma = \gamma - \gamma'' = \frac{1}{2}$. Since $\lambda_i - \lambda_{i+1} > 1$,
at least one of $\gamma'$ or $\gamma''$ must lie in the open interval $(\lambda_{i+1}, \lambda_i)$. Without loss of generality, say the $\lambda_a$-block with content $\gamma'$ lies in the interval. Then the block must be a parallelogram with non-zero $\nu$-entries:
\begin{center}
\begin{tikzpicture}
\draw
    (-2,0) node {\Large $\gamma'$}
-- (-1.5,0) node {\Large $\ldots$}
 -- (-1,0) node {\Large $\gamma'$}
 -- (-1.25,-1) node {\Large $\gamma'$}
-- (-1.75,-1) node {\Large $\ldots$}
-- (-2.25,-1) node {\Large $\gamma'$}
-- cycle;

\filldraw[black] (-2.5,-1.5) circle (2pt) node[below]{\Large $\lambda_i$};
\draw [|-|](-1.25,-1.5) -- node[below] {$\frac{1}{2}$} (-0.5,-1.5);
\filldraw[black] (0.8,-1.5) circle (2pt) node[below]{\Large $\lambda_{i+1}$};

\draw
    (-0.5,0) node {\Large $\gamma$}
 -- (0,0) node {\Large $\ldots$}
 -- (0.5,0) node {\Large $\gamma$}
 -- (0.5,-1) node {\Large $\gamma$}
-- (0,-1) node {\Large $\ldots$}
-- (-0.5,-1) node {\Large $\gamma$}
 -- cycle;

\draw[arrows = {-Stealth[]}]          (-0.4,0)   to [out=90,in=90]node[above]{} (-3,0);
\draw[arrows = {-Stealth[]}]          (0.4,0)   to [out=90,in=90]node[above]{} (3,0);

\draw (1.5,-0.5) node {\Large $\dots \dots$};

\draw[-{Implies},double]          (-1.8,0)   to [out=90,in=90]node[above]{} (-4,0);
\draw[-{Implies},double]          (-1.2,0)   to [out=90,in=90]node[above]{} (1,0);

\end{tikzpicture} 
\end{center}
Then we are reduced to the case of a parallelogram $\lambda_a$-block with content $\gamma'$,
and hence the result follows.

\begin{example}
Let $G = U(4,3)$ and $X$ corresponds to the combinatorial $\theta$-stable data:
\begin{center}
\begin{tikzpicture}
\draw
    (0,0) node {\Large $1$}
 -- (1,0) node {\Large $1$}
-- (0.6, -1)
-- (0.5,-1) node {\Large $1$}
-- (0.4, -1)
 -- cycle;

\draw[arrows = {-Stealth[]}]          (1.6,0)   to [out=90,in=90]node[above]{$1$} (-0.5,0);
\draw[arrows = {-Stealth[]}]          (2.4,0)   to [out=90,in=90]node[above]{$-1$} (4.5,0);

\draw
    (1.5,0) 
		-- (2,0) node {\Large $\frac{1}{2}$}
 -- (2.5,0) 
-- (2.5,-1) 
-- (2,-1) node {\Large $\frac{1}{2}$}
-- (1.5,-1) 
 -- cycle;

\draw
    (3,0) 
		-- (3.5,0) node {\Large $\frac{-1}{2}$}
 -- (4,0) 
-- (4,-1) 
-- (3.5,-1) node {\Large $\frac{-1}{2}$}
-- (3,-1) 
 -- cycle;
\end{tikzpicture}
\end{center}
Therefore, $X$ has infinitesimal character 
$$\Lambda = (1,1,1,\frac{1}{2}+1,\frac{1}{2}-1,\frac{-1}{2},\frac{-1}{2}) = (\frac{3}{2},1,1,1,\frac{-1}{2},\frac{-1}{2},\frac{-1}{2}),$$ 
and lowest $K$-type $\delta$ of highest weight:
\begin{align*}
\mu = &\Big(1,1,{\bf \frac{1}{2}},\frac{-1}{2} \Big| 1,{\bf \frac{1}{2}}, \frac{-1}{2}\Big) - \rho(\mathfrak{u}) + 2\rho(\mathfrak{u} \cap \mathfrak{p})\\ 
= &\Big(1,1,{\bf \frac{1}{2}},\frac{-1}{2} \Big| 1,{\bf \frac{1}{2}}, \frac{-1}{2}\Big) - \Big(2,2,{\bf \frac{-1}{2}},\frac{-5}{2} \Big| 2,{\bf \frac{-1}{2}}, \frac{5}{2}\Big) + (2,2,{\bf 0},-2 | 2,{\bf-1},-3) \\
= &(1,1,{\bf 1},0 | 1,{\bf 0},-1).
\end{align*}
The highlighted coordinates are those coming from the $\lambda_a$-block with content $\frac{1}{2}$. 
By \eqref{eq-pminus1}, the $K$-type $(1,1,{\bf 1-1},0 | 1,{\bf 0+1},-1) = (1,1,0,0 | 1,1,-1)$ is bottom layer (this can
also be read off directly from the combinatorial $\theta$-stable data), and has
different signature as $\mu$. 

\medskip
To verify this, we implement this representation on \texttt{atlas}:
\begin{verbatim}
atlas> set G = U(4,3)
atlas> set p = parameter(G,419,[2,1,1,1,-1,0,-1]/1,[1,0,0,0,-1,0,0]/1)
atlas> infinitesimal_character(p)
Value: [  3,  2,  2,  2, -1, -1, -1 ]/2
atlas> print_branch_irr_long(p,KGB(G,34),height(p))
m  x   lambda                    hw                      dim  height
1  95  [1, 1, 1, 1, 0, 0, -1]/1  [1, 1, 1, 0, 1, 0, -1]  32   16
\end{verbatim}
This says \texttt{p} has the correct infinitesimal character $\Lambda$ and lowest $K$-type $\mu$ (by looking
at the \texttt{hw} column). To look at the signatures of the $K$-types, we have
\begin{verbatim}
atlas> print_sig_irr_long(p,KGB(G,34),height(p)+2)
sig  x  lambda                    hw                      dim  height
1   95  [1, 1, 1, 1, 0, 0, -1]/1  [1, 1, 1, 0, 1, 0, -1]  32   16
1   58  [1, 1, 1, 1, 0, 0, -1]/1  [1, 1, 1, 1, 1, 0, -2]  15   17
1   55  [1, 1, 1, 1, 0, 0, -1]/1  [1, 1, 1, -1, 1, 0, 0]  30   17
s   35  [1, 1, 1, 1, 0, 0, -1]/1  [1, 1, 0, 0, 1, 1, -1]  36   17
s   0   [1, 1, 1, 1, 0, 0, -1]/1  [1, 1, 0, -1, 1, 1, 0]  60   18
\end{verbatim}
The first and fourth row have different \texttt{sig} values \texttt{1} and \texttt{s}. 
This implies that the form is indefinite on these two $K$-types, verifying our proof in this case.
\end{example}

\subsection{Proof of Theorem \ref{thm-upq} - Case (b)} \label{subsec-caseb}
We now deal with the case when there is $\lambda_i > \alpha$. This happens when there is a $\lambda_a$-block of content $\gamma$ having at least one $\nu$-coordinate surpassing the other $\lambda_a$-blocks, for instance:
\begin{center}
\begin{tikzpicture}
\draw

    (-2,0) node {\Large $\gamma'$}
 -- (-1.5,0) node {\Large $\dots$}
-- (-1,0) node {\Large $\gamma'$}
 -- (-0.75,-1)  node {\Large $\gamma'$}
-- (-1.5,-1) node {\Large $\dots$}
-- (-2.25,-1) node {\Large $\gamma'$}
 -- cycle;

\draw[-{Implies},double]          (-1.8,0)   to [out=90,in=180]node[above]{} (-4.5,0.7);
\draw[-{Implies},double]         (-1.2,0)   to [out=90,in=180]node[above]{} (1.5,0.7);

\draw (0,-0.5) node {\Large $\dots$};

\draw
    (0.5,0) node {\Large $\gamma''$}
 -- (1,0) node {\Large $\dots$}
-- (1.5,0) node {\Large $\gamma''$}
 -- (1.5,-1)  node {\Large $\gamma''$}
-- (1,-1) node {\Large $\dots$}
-- (0.5,-1) node {\Large $\gamma''$}
 -- cycle;

\draw[-{Implies},double]          (0.7,0)   to [out=90,in=90]node[above]{} (-2.8,0);
\draw[-{Implies},double]         (1.3,0)   to [out=90,in=180]node[above]{} (3,.8);

\draw (2.2,-0.5) node {\Large $\dots$};

\draw
    (2.75,0) node {\Large $\gamma$}
 -- (3.25,0) node {\Large $\dots$}
-- (3.75,0) node {\Large $\gamma$}
 -- (4,-1)  node {\Large $\gamma$}
-- (3.5,-1) node {\Large $\dots$}
-- (3,-1) node {\Large $\gamma$}
 -- cycle;

\draw[arrows = {-Stealth[]}]          (2.95,0)   to [out=90,in=90]node[above]{} (-3.8,0);
\draw[arrows = {-Stealth[]}]            (3.8,0)   to [out=90,in=180]node[above]{} (7,1.8);

\draw (4.5,-0.5) node {\Large $\dots$};

\filldraw[black] (-3.8,-1.5) circle (2pt) node[above]{$\lambda_i$};
\filldraw[black] (-2.8,-1.5) circle (2pt) node[above]{$\lambda_{i+1}$};
\draw [|-|](-3.8,-1.7) -- node[below] {$>1$} (-2.8,-1.7);
\draw [|-](-2.25,-1.7) -- node[below] {$\lambda_a$} (4,-1.7);
\draw (4.5,-1.7) node {\Large $\dots$};

\end{tikzpicture}
\end{center}
From now on, we call the $\lambda_a$-block with content $\gamma$ of the above form a {\bf $\lambda$-large block}.
By switching $U(p,q)$ to $U(q,p)$ if necessary, one can assume the $\lambda$-large block is of one of the forms:
\begin{equation} \label{eq-blocks}
 \begin{tikzpicture} \draw
    (2,0) node {\Large $\gamma$}
 -- (2.75,0) node {\Large $\dots$}
-- (3.5,0) node {\Large $\gamma$}
 -- (3.25,-1)  node {\Large $\gamma$}
-- (2.75,-1) node {\Large $\dots$}
-- (2.25,-1) node {\Large $\gamma$}
 -- cycle; \end{tikzpicture} \quad \quad \quad
\begin{tikzpicture} \draw
    (2,0) node {\Large $\gamma$}
 -- (2.5,0) node {\Large $\dots$}
-- (3,0) node {\Large $\gamma$}
 -- (3,-1)  node {\Large $\gamma$}
-- (2.5,-1) node {\Large $\dots$}
-- (2,-1) node {\Large $\gamma$}
 -- cycle; \end{tikzpicture}.
\quad \quad \quad
\begin{tikzpicture} \draw
    (1.75,0) node {\Large $\gamma$}
 -- (2.25,0) node {\Large $\dots$}
-- (2.75,0) node {\Large $\gamma$}
 -- (3,-1)  node {\Large $\gamma$}
-- (2.5,-1) node {\Large $\dots$}
-- (2,-1) node {\Large $\gamma$}
 -- cycle; \end{tikzpicture}
\end{equation}

\begin{definition} \label{def-semisph}
Let $\Pi$ be a fundamental representation, whose combinatorial $\theta$-stable datum contains a $\lambda$-large $\gamma$-block
of the form \eqref{eq-blocks}. Then the {\bf semi-spherical 
component} corresponding to the $\lambda$-large $\gamma$-block 
is the longest chain of $\lambda_a$-blocks corresponding to $\Pi$ satisfying:
\begin{itemize}
\item The $\gamma$-block is the rightmost block; and
\item For two neighboring $\lambda_a$-blocks, they must be of one of the following forms:
\begin{center}
\begin{tikzpicture}
\draw
    (1,0) 
-- (2,0)
 -- (1.5,-1)  
-- (0.5,-1);

\draw (0.5,0) node {\Large $\dots$};
\draw (0,-1) node {\Large $\dots$};

\draw
    (3.8,0) 
 -- (2.3,0) 
-- (2.3,-1)
 -- (3.8,-1)
-- cycle;
\end{tikzpicture},\quad or \quad
\begin{tikzpicture}
\draw
    (3,0) 
-- (2,0)
 -- (2.5,-1)  
-- (3.5,-1);

\draw (3.5,0) node {\Large $\dots$};
\draw (4,-1) node {\Large $\dots$};

\draw
    (1.8,0) 
 -- (0.3,0) 
-- (0.3,-1)
 -- (1.8,-1)
-- cycle;
\end{tikzpicture},\quad or \quad
\begin{tikzpicture}
\draw
    (3,0) 
-- (2,0)
 -- (2.5,-1)  
-- (3.5,-1);

\draw (3.5,0) node {\Large $\dots$};
\draw (4,-1) node {\Large $\dots$};

\draw
    (0.8,0) 
-- (1.8,0)
 -- (1.3,-1)  
-- (0.3,-1);

\draw (0.3,0) node {\Large $\dots$};
\draw (-0.2,-1) node {\Large $\dots$};
\end{tikzpicture}
\end{center}
\end{itemize}
(see Example \ref{eg-gammabig} for one such example). In other words, suppose the highest weight of the 
a lowest $K$-type $\delta$ of $\Pi$ is of the form
$$\mu = \mu(\delta) = (\cdots, \fbox{a, \dots, a}, \cdots | \cdots, \fbox{b, \dots, b}, \cdots)$$
where the boxed coordinates are the $\gamma$-component (Definition \ref{def-bottom}(a)) of the $\lambda$-large block,
then the semi-spherical component of this block corresponds to to all `$a$' coordinates of $\mu$:
$$\mu = (\cdots > \underline{a, \dots, a}, \fbox{a, \dots, a} \geq \cdots | \cdots \geq \underline{b',\dots, b',\cdots, b'', \dots, b''}, \fbox{b, \dots, b} > \cdots)$$

We say $\Sigma$ is {\bf semi-spherical} if its combinatorial $\theta$-stable datum contains precisely a single semi-spherical component. In other words, a lowest $K$-type of $\Sigma$ has highest weight of the form $(a,\dots, a\ |\ b',\dots,b', \cdots, b, \dots,b)$.
\end{definition}

\begin{remark}
In \cite{KS83}, Knapp and Speh defined the notion of {\bf basic cases} for all real reductive groups of equal rank, and they were used extensively to study the unitary dual of $U(n,2)$. In the case of $U(p,q)$, one can check by directly applying \cite[Theorem 1.1]{KS83} that all basic cases are semi-spherical.

\end{remark}

\begin{theorem} \label{thm-semipm}
Let $\Sigma$ be a semi-spherical module, with infinitesimal character satisfying Case (b) of Theorem \ref{thm-upq}. Then the Hermitian form of $\Sigma$
is non-unitary on the level of $\mathfrak{p}^{+}$. 

\end{theorem}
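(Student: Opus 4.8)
The plan is to prove Theorem~\ref{thm-semipm} by the deformation argument sketched at the end of Section~\ref{sec-nonunit}, deforming along the $\nu$-coordinates of the $\lambda$-large $\gamma$-block. Write $\gamma+\nu_1$ for the largest coordinate of $\Lambda$ contributed by that block, so $\nu_1$ is its largest $\nu$-coordinate; because the block is $\lambda$-large, $\gamma+\nu_1$ exceeds every coordinate of $\lambda_a$ and the ``$>1$ gap'' lies above it. For $t\ge 0$ let $\Sigma(t)$ be the irreducible Hermitian module whose combinatorial $\theta$-stable datum is that of $\Sigma$ with the $\nu$-coordinates of the $\lambda$-large block replaced by $\nu_i+t$ (as in \eqref{eq-nubig} and the proof of Proposition~\ref{prop-parallel}); then $\Sigma(0)=\Sigma$, the $\lambda_a$-datum and hence the lowest $K$-type(s) of $\Sigma(t)$ are independent of $t$, and $\|\Lambda(t)\|\to\infty$ as $t\to\infty$.

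The first step is to realize $\Sigma(t)$, following the template of the proof of Proposition~\ref{prop-parallel}, as the image of the long intertwining operator on the standard module $X(\Lambda(t))$ of \cite[Theorem~4.3]{SRV98}, and to factor that operator so as to split off from a fixed induced module
$$
I(t)=\mathrm{Ind}_{P}^{G}\big(W\boxtimes J_k(\,\cdots;\cdots\,)\boxtimes 1\big)
$$
the $GL(k,\mathbb{C})$-factor $J_k$ whose Zhelobenko parameter carries the deformed coordinate $\gamma+\nu_1+t$, with $W$ the fixed irreducible representation of the complementary Levi factor accounting for the rest of the (single) semi-spherical chain. By \cite[Proposition~12.2]{V86}, together with induction in stages, the $K$-type multiplicities of $I(t)$ are constant in $t$, and for $t\gg 0$ the module $I(t)$ is irreducible, so $\Sigma(t)$ is reducible only for $t$ in a discrete set.

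The heart of the matter is the analogue of Claim~2 in the proof of Proposition~\ref{prop-parallel}: every intertwining factor obtained by sliding $\gamma+\nu_1+t$ past the remaining coordinates of $\Lambda(t)$ is a composite of rank-one operators attached to $U(1,1)$- or $GL(2,\mathbb{C})$-blocks, and such an operator fails to be injective on the level of $\mathfrak{p}^{+}$ only at the rank-one reducibility values ($\nu=\frac{1}{2}$ for $U(1,1)$, parameter difference $0$ or $1$ for $GL(2,\mathbb{C})$; cf.\ \cite[Proposition~7.4]{BJ90b}). The $\lambda$-large hypothesis guarantees that $\gamma+\nu_1+t$ is separated from every coordinate of $\Lambda(t)$ outside the $\lambda$-large block by more than $1$ for all $t\ge 0$, and the coordinates inside the block are handled exactly as in Proposition~\ref{prop-parallel}; hence none of these reducibility values is attained, the factors in question are injective up to level $\mathfrak{p}^{+}$, and so $\Sigma(t)$ and $I(t)$ carry the same multiplicities and signatures on all $K$-types at level $\le\mathfrak{p}^{+}$. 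Feeding the constancy of the $K$-multiplicities of $I(t)$ into Corollary~\ref{cor-jantzen} — applied with $\delta$ a lowest $K$-type, and with a $K$-type of $\delta\otimes\mathfrak{p}^{+}$, whose multiplicity in $I(t)$ is now entirely accounted for by $\Sigma(t)$ — shows that the signature of the Hermitian form of $\Sigma(t)$ on each such $K$-type is independent of $t$. It then remains to run Parthasarathy's inequality at $t\gg 0$: the left-hand side of \eqref{eq-dirach} depends only on the fixed weight $\mu=\mu(\delta)$ while $\|\Lambda(t)\|\ge|\gamma+\nu_1+t|\to\infty$, so the $\mathfrak{p}^{-}$-version of \eqref{eq-dirach} holds at $\delta$ for $t$ large, whence by Theorem~\ref{thm-partha} the Hermitian form of $\Sigma(t)$ has opposite signs on $\delta$ and some $K$-type of $\delta\otimes\mathfrak{p}^{+}$; by the constancy just established this persists at $t=0$, i.e.\ $\Sigma$ is non-unitary up to level $\mathfrak{p}^{+}$ in the sense of Definition~\ref{def-upto}.

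The main obstacle I expect is the bookkeeping in the middle steps: organizing the long intertwining operator for an entire semi-spherical chain of blocks (rather than the single block of Proposition~\ref{prop-parallel}) so that exactly the factors touching the deformed coordinate are manifestly injective on low $K$-types, while the remaining, undeformed factors — including any ``internal'' reducibility of the chain where neighbouring contents differ by precisely $1$ — are cleanly absorbed into the fixed module $I(t)$ and produce no low-$K$-type leakage into composition factors of $I(t)$ other than $\Sigma(t)$. Everything else (the $GL(n,\mathbb{C})$ multiplicity count, the rank-one kernel computations, and the final Dirac estimate) is a routine adaptation of arguments already carried out for Proposition~\ref{prop-parallel}.
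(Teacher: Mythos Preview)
Your overall architecture --- deform the $\nu$-coordinates of the $\lambda$-large block, factor the long intertwining operator, show each factor has no kernel at level $\mathfrak{p}^+$, apply Corollary~\ref{cor-jantzen}, and finish with Dirac --- is exactly the paper's strategy. But the step you flag as ``bookkeeping'' hides a genuine new ingredient, and your description of it is wrong in a way that matters.

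You assert that every intertwining factor is a rank-one operator attached to a $U(1,1)$- or $GL(2,\mathbb{C})$-block, with kernel on level $\mathfrak{p}^+$ only at $\nu=\frac{1}{2}$ or parameter difference $1$. That is the situation of Proposition~\ref{prop-parallel}, where $G'=U(r,r)$ has restricted root system of type $C_r$. A semi-spherical module, however, generally lives in $U(p,q)$ with $p\neq q$: the restricted root system is $BC_q$, and the long-root reflection $s_{long}$ --- your ``sign change'' factor (type (ii) in the paper's decomposition) --- is \emph{not} induced from $U(1,1)$ but from $U(p-q+1,1)$, with inducing data $J_1(\gamma+\nu_x+t;\gamma-\nu_x-t)\boxtimes F_\tau$ on $GL(1,\mathbb{C})\times U(p-q,0)$, where $F_\tau$ is the finite-dimensional representation whose infinitesimal character $(\tau_1>\cdots>\tau_{p-q})$ records the contents of all the trapezoidal blocks in the semi-spherical chain. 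The criterion ``$\nu=\frac{1}{2}$'' is simply inapplicable here. What the paper does instead is a character-formula argument: it lists the $\lambda_a$-data of the $K$-types that could appear at level $\mathfrak{p}^+$ in the kernel, and checks that no standard module with such a $\lambda_a$-datum can have the required infinitesimal character $(\gamma+\nu_x+t,\tau_1,\dots,\tau_{p-q},\gamma-\nu_x-t)$, precisely because $(\gamma+\nu_x+t)-\tau_1>1$. This is the heart of the proof and is not a routine adaptation of Proposition~\ref{prop-parallel}.

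A smaller gap: your separation claim (``$\gamma+\nu_1+t$ is separated from every coordinate outside the $\lambda$-large block by more than $1$ for all $t\ge 0$'') does not follow from the $\lambda$-large hypothesis alone, since other blocks in the chain may themselves have $\nu$-coordinates surpassing $\alpha$ and landing close to $\gamma+\nu_j$. The paper first reduces (via a bottom-layer argument on a shorter semi-spherical sub-chain) to the case where the coordinates above the $>1$ gap come \emph{only} from the rightmost $\gamma$-block, and then deforms only those top $i$ coordinates rather than all $\nu$-coordinates of the block.
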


\begin{proof} Let $\Sigma$ be a semi-spherical module
whose combinatorial $\theta$-stable datum contains a $\lambda$-large $\gamma$-block.
One can reduce the proof to the case where {\bf the $> 1$ gaps are solely contributed
to the rightmost $\lambda$-large $\gamma$-block}, for instance:
\begin{center}
\begin{tikzpicture}
\draw (-1.1,-0.3) node {$\gamma'$-};
\draw (-1.1,-0.7) node {block};
\draw (-1.1,-0.5) circle (0.6);

\draw[-{Implies},double]          (-1.4,0)   to [out=90,in=90]node[above]{} (-2,0);
\draw[-{Implies},double]         (-0.8,0)   to [out=90,in=90]node[above]{} (-0.2,0);

\draw (0,-0.5) node {\Large $\dots$};

\draw (1,-0.3) node {$\gamma''$-};
\draw (1,-0.7) node {block};
\draw (1,-0.5) circle (0.6);

\draw[-{Implies},double]          (0.7,0)   to [out=90,in=90]node[above]{} (-2.8,0);
\draw[-{Implies},double]         (1.3,0)   to [out=90,in=180]node[above]{} (3,.8);

\draw (2.2,-0.5) node {\Large $\dots$};

\draw (3.4,-0.3) node {$\gamma$-};
\draw (3.4,-0.7) node {block};
\draw (3.4,-0.5) circle (0.6);

\draw[arrows = {-Stealth[]}]          (3,0)   to [out=90,in=90]node[above]{} (-4.4,0);
\draw[arrows = {-Stealth[]}]            (3.8,0)   to [out=90,in=180]node[above]{} (7,2);

\filldraw[black] (-5,-1.5) circle (2pt) node[above]{$\lambda_1$};
\draw (-4.4,-1.5) node {\Large $\dots$};
\filldraw[black] (-3.8,-1.5) circle (2pt) node[above]{$\lambda_i$};
\filldraw[black] (-2.8,-1.5) circle (2pt) node[above]{$\lambda_{i+1}$};
\draw [|-|](-3.8,-1.7) -- node[below] {$>1$} (-2.8,-1.7);
\draw [|-|](-1.5,-1.7) -- node[below] {semi-spherical component} (3.8,-1.7);

\end{tikzpicture}
\end{center}
or the irreducible module studied in Example \ref{eg-gammabig} below. In other words, one has $\Lambda = (\lambda_1 \geq \dots \geq \lambda_i > \lambda_{i+1} \geq \dots)$ with
\begin{center}
$\lambda_i - \lambda_{i+1} > 1$ and $\lambda_{x} = \gamma + \nu_{x}$ for $1\leq x \leq i$.
\end{center}
Indeed, suppose that there is a $\gamma''$-block on the left of $\gamma$-block, with some $(\gamma''+\nu'')$-coordinates 
`surpassing' $\lambda_i$ in the above diagram, then one can first prove Theorem \ref{thm-semipm} holds for the semi-spherical sub-component with $\gamma''$-block being the rightmost block, and then the general case follows from noting that the $L \cap K$-types in Theorem \ref{thm-semipm} corresponding to this sub-component are $\mathfrak{p}^+$-bottom layer (c.f. Remark \ref{rmk-blocks}).

\medskip
Let $\Sigma'$ be the irreducible representation of $U(p-i,q-i)$ corresponding to the combinatorial
data of $\Sigma$ with $(\gamma + \nu_1, \dots, \gamma + \nu_i, \gamma - \nu_i, \dots,  \gamma - \nu_1)$ removed.
Then $\Sigma$ is the irreducible quotient of the real parabolically induced module:
$$Ind_{MAN}^{G}\left(J_i(\gamma+\nu_1, \dots, \gamma + \nu_i ;\gamma - \nu_1, \dots, \gamma - \nu_i) \boxtimes \Sigma' \boxtimes 1\right)$$
with Levi subgroup $L := MA = GL(i,\mathbb{C}) \times U(p-i,q-i)$. Let 
$$I(t) := Ind_{MAN}^{G}\left(J_i(\gamma+\nu_1+t, \dots, \gamma + \nu_i+t ;\gamma - \nu_1-t, \dots, \gamma - \nu_i-t) \boxtimes \Sigma'\boxtimes 1 \right)$$
and $J(t)$ be its irreducible quotient (so that $J(0) = \Sigma$). 
As in Claim 2 in the proof of Proposition \ref{prop-parallel}, we {\it claim} the following holds:
\begin{center}
\fbox{$I(t)$ and $J(t)$ have the same $K$-type multiplicities and signatures up to level $\mathfrak{p}^+$. }
\end{center}
The 
(non-reduced) root system of $U(p,q)$ is of Type $BC_q$, and $I(t) = \mathrm{im}(\iota(w_L))$ is the image of the long intertwining operator of $L$, where $w_L$ is the longest 
Weyl subgroup element in $W(A_{i-1}) \times W(BC_{q-i})  \leq W(BC_q)$), with:
\[(\nu_1+t, \dots, \nu_i+t, \nu_{i+1}, \dots, \nu_q) \xrightarrow{w_L} (\nu_i+t, \dots, \nu_1+t, -\nu_{i+1}, \dots, -\nu_q).\]

Now consider the long intertwining operator $\iota(w_0) = \iota(w') \circ \iota(w_L)$ of $G$, where $\iota(w')$ is split into the following the Weyl group actions:
\begin{itemize}
\item[(i)] $(\dots, \nu_x+t, -\nu_w, \dots) \mapsto (\dots, -\nu_w, \nu_x+t, \dots)$;
\item[(ii)] $(\dots, \nu_x+t) \mapsto (\dots, -\nu_x-t)$;
\item[(iii)] $(\dots, -\nu_w, -\nu_x-t, \dots) \mapsto (\dots, -\nu_x-t, -\nu_w, \dots)$; and
\item[(iv)] $(\dots, \nu_v+t, -\nu_x-t, \dots) \mapsto (\dots, -\nu_x-t, \nu_v+t, \dots)$
\end{itemize}
for $x$, $v$, $w$ satisfying $1 \leq x < v \leq i < w \leq q$. 

\medskip
So the claim above follows if one can show the intertwining operators (i) - (iv) have no kernel up to level $\mathfrak{p}^{\pm}$. The proofs for (i), (iii), (iv) are similar to that in the proof of Proposition \ref{prop-parallel}: for instance, (iii) corresponds to the intertwining operator
\begin{align*}
&Ind_{GL(1,\mathbb{C}) \times GL(1,\mathbb{C})}^{GL(2,\mathbb{C})}(J_1(\gamma'' - \nu_w; \gamma'' + \nu_w) \boxtimes J_1(\gamma - \nu_x-t; \gamma+ \nu_x+t) ) \\
\longrightarrow\ &Ind_{GL(1,\mathbb{C}) \times GL(1,\mathbb{C})}^{GL(2,\mathbb{C})}(J_1(\gamma - \nu_x-t; \gamma + \nu_x+t) \boxtimes J_1(\gamma'' - \nu_w; \gamma'' + \nu_w)).
\end{align*}
The lowest $U(2)$-type of these modules has highest weight $(2\gamma'',2\gamma)$, since $\gamma'' \geq \gamma$ by the definition of semi-spherical blocks.
The kernel of the above intertwining operator does not contain the level $\mathfrak{p}$ $U(2)$-type with highest weight $(2\gamma''+1,2\gamma -1)$. This is due to the fact that 
$$\begin{cases}
|(\gamma'' + \nu_w) - (\gamma+ \nu_x+t)| &> 1\\
|(\gamma'' - \nu_w) - (\gamma - \nu_x-t)| &> 1 
\end{cases}$$
by our hypothesis in Case (b) (recall $\gamma + \nu + t = \lambda_u + t$ and $\gamma'' + \nu_w = \lambda_x$ for some $x > i$), and the character theory of $(\mathfrak{gl}(2,\mathbb{C}),U(2))$-modules. More precisely, if the kernel is non-zero, then a lowest $K$-type of its kernel must be of the form
$$\kappa := (a+w,a,\dots,a,a-x\ |\ b'+y',b',\dots, b';\ \cdots;\ b''+y'', b'', \dots, b'';\ b+y, b, \dots,b, b-z)$$
with $w+(y'+y''+\dots+y) = x+z \geq 2$, $x, z \geq 1$ and at most one $y'$, $y''$, $\dots$, $y$ is nonzero. Then it is easy
to check that $\|\kappa + 2\rho(\mathfrak{k})\|$ is greater than those listed in Theorem \ref{thm-semipm}. 

\medskip
As for (ii), one studies the intertwining operator:
\begin{equation} \label{eq-upq1}
\begin{aligned}
&Ind_{GL(1,\mathbb{C}) \times U(p-q,0)}^{U(p-q+1,1)}(J_1(\gamma + \nu_x+t; \gamma - \nu_x+t) \boxtimes F_{\tau}) \\
\longrightarrow\ &Ind_{GL(1,\mathbb{C}) \times U(p-q,0)}^{U(p-q+1,1)}(J_1(\gamma - \nu_x-t; \gamma + \nu_x+t) \boxtimes F_{\tau}),
\end{aligned}
\end{equation}
where $F_{\tau}$ is the irreducible representation of $U(p-q,0)$ with infinitesimal character $\tau = (\tau_1 > \tau_2 > \dots > \tau_{p-q})$ whose coordinates come from the contents of all trapezoidal blocks of the semi-spherical component. 

The infinitesimal character of the modules in \eqref{eq-upq1} is equal to:
\begin{equation} \label{eq-infl1}
(\gamma + \nu_x + t > \tau_1 > \tau_2 > \dots > \tau_{p-q} > \gamma - \nu_x - t), 
\end{equation}
where $(\gamma + \nu_x + t) - \tau_1 > 1$ by our hypothesis of Case (b) again.

We {\it claim} that the kernel of \eqref{eq-upq1} has no kernel up to $\mathfrak{p}^{+}$ (the claim and its proof also holds for $\mathfrak{p}^{-}$). Indeed, the image of \eqref{eq-upq1}
is precisely the irreducible representation of $U(p-q+1,1)$ with combinatorial $\theta$-stable data:
\begin{center}
\begin{tikzpicture}
\draw
    (0,0) 
 -- (0.4,0) node {\Large $\tau_1$}
-- (0.8,0) 
-- (0.6,-1) 
-- (0.2,-1) 
 -- cycle;

\draw (1.5,-0.5) node {\Large $\dots$};

\draw
    (2,0) 
 -- (2.4,0) node {\Large $\tau_{p-q}$}		
 -- (2.8,0) 
 -- (2.6,-1) 
-- (2.2,-1) 
 -- cycle;

\draw
    (3,0) 
 -- (3.3,0) node {\Large $\gamma$}		
 -- (3.6,0) 
 -- (3.6,-1) 
-- (3.3,-1) node {\Large $\gamma$}
-- (3,-1) 
 -- cycle;

\draw[arrows = {-Stealth[]}]          (3.1,0)   to [out=90,in=90]node[above]{$\nu_x$} (-0.4,0);
\draw[arrows = {-Stealth[]}]          (3.5,0)   to [out=90,in=180]node[above]{$-\nu_x$} (5,1);
\end{tikzpicture} 
\quad or  \quad
\begin{tikzpicture}
\draw
    (0,0) 
 -- (0.4,0) node {\Large $\tau_1$}
-- (0.8,0) 
-- (0.6,-1) 
-- (0.2,-1) 
 -- cycle;

\draw (1.5,-0.5) node {\Large $\dots$};

\draw
    (2,0) 
 -- (2.4,0) node {\Large $\tau_{p-q}$}		
 -- (2.8,0) 
 -- (2.6,-1) 
-- (2.2,-1) 
 -- cycle;

\draw
    (3,0) 
 -- (3.3,0) node {\Large $\gamma$}		
 -- (3.6,0) 
 -- (3.8,-1) 
-- (3.5,-1) node {\Large $\gamma$}
-- (3.2,-1) 
 -- cycle;

\draw[arrows = {-Stealth[]}]          (3.1,0)   to [out=90,in=90]node[above]{$\nu_x$} (-0.4,0);
\draw[arrows = {-Stealth[]}]          (3.5,0)   to [out=90,in=180]node[above]{$-\nu_x$} (5,1);
\end{tikzpicture} 
\end{center}
depending on the parity of $2\gamma$. 

Also, by Proposition \ref{prop-khat}, the $\lambda_a$-datum corresponding to the unique $U(p-q+1) \times U(1)$-type in
\begin{center}
(LKT of the modules in \eqref{eq-upq1}) $\otimes\ \mathfrak{p}^+$ 
\end{center}
must be one of the following forms:
\begin{center}
\begin{tikzpicture}
\draw
    (-1.5,0) 
 -- (-1.1,0) node {\Large $\tau_1+1$}
-- (-0.7,0) 
-- (-0.9,-1) 
-- (-1.3,-1) 
 -- cycle;

\draw
    (0,0) 
 -- (0.4,0) node {\Large $\tau_2$}
-- (0.8,0) 
-- (0.6,-1) 
-- (0.2,-1) 
 -- cycle;

\draw (1.5,-0.5) node {\Large $\dots$};

\draw
    (2,0) 
 -- (2.4,0) node {\Large $\tau_{p-q}$}		
 -- (2.8,0) 
 -- (2.6,-1) 
-- (2.2,-1) 
 -- cycle;

\draw
    (3.5,0) 
 -- (3.8,0) node {\Large $\gamma - \frac{1}{2}$}		
 -- (4.1,0) 
 -- (4.3,-1) 
-- (4,-1) node {\Large $\gamma - \frac{1}{2}$}
-- (3.6,-1) 
 -- cycle;
\end{tikzpicture} 
\quad or  \quad
\begin{tikzpicture}
\draw
    (-1.5,0) 
 -- (-1.1,0) node {\Large $\tau_1+1$}
-- (-0.7,0) 
-- (-0.9,-1) 
-- (-1.3,-1) 
 -- cycle;

\draw
    (0,0) 
 -- (0.4,0) node {\Large $\tau_2$}
-- (0.8,0) 
-- (0.6,-1) 
-- (0.2,-1) 
 -- cycle;

\draw (1.5,-0.5) node {\Large $\dots$};

\draw
    (2,0) 
 -- (2.4,0) node {\Large $\tau_{p-q}$}		
 -- (2.8,0) 
 -- (2.6,-1) 
-- (2.2,-1) 
 -- cycle;

\draw
    (3,0) 
 -- (3.4,0) node {\Large $\gamma$}		
 -- (3.8,0) 
 -- (3.6,-1) 
-- (3.2,-1) 
 -- cycle;

\draw
    (4.2,0) 
 -- (4.6,0) 
 -- (4.8,-1) 
-- (4.4,-1) node {\Large $\gamma-1$}
-- (4,-1) 
 -- cycle;
\end{tikzpicture} 
\end{center}
Suppose on the contrary that the intertwining map \eqref{eq-upq1} has a kernel up to $\mathfrak{p}^+$.
Then the character formula of the irreducible quotient ($=$ image) of
\eqref{eq-upq1} must be an alternating sum of standard modules having infinitesimal character equal to \eqref{eq-infl1},
and one of the summands must be a standard module whose $\lambda_a$-datum is of one of the above forms.
However, it is obvious (from the inequality $(\gamma + \nu_x + t) - \tau_1 > 1$) that none of the above $\lambda_a$-data yield
representations with infinitesimal character \eqref{eq-infl1}.

Consequently, the character formula of the irreducible quotient in
\eqref{eq-upq1} contains no standard modules with lowest $K$-types at level $\mathfrak{p}^+$,
and hence the intertwining map \eqref{eq-upq1} has no kernel up to $\mathfrak{p}^+$.

\medskip
In conclusion, the intertwining operators corresponding to the Weyl group elements (i) - (iv) above have
no kernel on the level of $\mathfrak{p}^{\pm}$. Hence the claim holds, i.e. $I(t)$ and $J(t)$ has the same $K$-type multiplicities up to level $\mathfrak{p}^+$. As a result,  
the same arguments in the proof of Proposition \ref{prop-parallel} applies, and $\Sigma$ is non-unitary up to level $\mathfrak{p}^{+}$ by the first paragraph of the proof of the theorem.
\end{proof}

\begin{example} \label{eg-gammabig}
Let $G = U(5,4)$ and $\Sigma$ be the representation with combinatorial $\theta$-stable data:
\begin{center}
\begin{tikzpicture}
\draw
    (0,0) node (1) {\Large $0$}
 -- (1,0) node (2) {\Large $0$} 
 -- (0.75,-1)
-- (0.5,-1) node {\Large $0$}
-- (0.25,-1) 
 -- cycle;

\draw[arrows = {-Stealth[]}]          (1.3,-1.2)   to [out=-90,in=-90]node[below]{$\frac{7}{2}$} (-3,-1.2);
\draw[arrows = {-Stealth[]}]          (1.7,-1.2)   to [out=-90,in=-90]node[below]{$\frac{-7}{2}$} (6,-1.2);

\draw
    (1.25,0) 
 -- (1.5,0) node {\Large $\frac{-1}{2}$}
-- (1.75,0)
-- (1.75,-1) 
-- (1.5,-1) node {\Large $\frac{-1}{2}$}
-- (1.25,-1)
 -- cycle;

\path (1) edge     [loop above]       node[] {$0$}         (1);
\path (2) edge     [loop above]       node[] {$0$}         (2);

\draw
    (-0.25,0) 
 -- (-0.5,0) node {\Large $\frac{1}{2}$}
-- (-0.75,0)
-- (-0.75,-1) 
-- (-0.5,-1) node {\Large $\frac{1}{2}$}
-- (-0.25,-1)
 -- cycle;
\draw[arrows = {-Stealth[]}]          (-0.7,-1.2)   to [out=-90,in=-90]node[below]{$\frac{1}{2}$} (-1.5,-1.2);
\draw[arrows = {-Stealth[]}]          (-0.3,-1.2)   to [out=-90,in=-90]node[below]{$\frac{-1}{2}$} (0.5,-1.2);

\draw
    (-1,0) 
 -- (-1.1,0) node (3) {} 
 -- (-1.35,0) node {\Large $1$}
-- (-1.6,0) node (4) {}
-- (-1.7,0) 
-- (-2.2,-1) 
-- (-1.85,-1) node {\Large $1$}
-- (-1.5,-1)
 -- cycle;

\path (3) edge     [loop above]       node[] {$0$}         (3);
\path (4) edge     [loop above]       node[] {$0$}         (4);
\end{tikzpicture}
\end{center}
The above blocks are semi-spherical (Definition \ref{def-semisph}) with infinitesimal character
$\Lambda = (3,1,1,1,0,0,0,0,-4)$ and lowest $K$-type $\delta$ of highest weight $\mu = (0,0,0,0,0 | 2,1,0,-1)$.
By Theorem \ref{thm-semipm}, the form of $\Sigma$ is indefinite on one of the level $\mathfrak{p}^+$ $K$-types
of highest weights:
$$(1,0,0,0,0 | 1,1,0,-1), \quad (1,0,0,0,0 | 2,0,0,-1),$$ 
$$(1,0,0,0,0 | 2,0,-1,-1), \quad (1,0,0,0,0 | 2,1,0,-2).$$
We implement this representation using \texttt{atlas}:
\begin{verbatim}
atlas> set G = U(5,4)
atlas> set p = parameter(G,7070,[3,1,1,1,0,0,0,0,-4]/1,[7,0,0,1,-1,0,0,0,-7]/2)
\end{verbatim}
To check it has the correct infinitesimal character $\Lambda$:
\begin{verbatim}
atlas> infinitesimal_character(p)
Value: [  3,  1,  1,  1,  0,  0,  0,  0, -4 ]/1
\end{verbatim}
More explicitly, by removing all non-zero values of $\nu$, one can check the $\lambda_a$-value of $\Sigma$
by the following:
\begin{verbatim}
atlas> infinitesimal_character(p*0)
Value: [ -1,  2,  2,  1,  1,  0,  0,  0, -1 ]/2
\end{verbatim}
The lowest $K$-type of $\Sigma$ is given by:
\begin{verbatim}
atlas> print_branch_irr_long(p,KGB(G,125),height(p))
sig  x    lambda                  hw                    dim  height
1    467  [1,1,1,0,0,0,0,0,-1]/1  [0,0,0,0,0,2,1,0,-1]  64   24
\end{verbatim}
Now we look at the signatures of some $K$-types of $X$:
\begin{verbatim}
atlas> print_sig_irr_long (p,KGB(G,125),height(p)+14)
sig  x     lambda                    hw                  dim  height
1    467   [1,1,1,0,0,0,0,0,-1]   [0,0,0,0,0,2,1,0,-1]   64   24
s    302   [1,1,1,0,0,0,0,0,-1]   [0,0,0,0,-1,2,1,0,0]   100  25
1    556   [2,1,0,0,0,0,0,0,-1]   [1,0,0,0,0,2,0,0,-1]   180  28
...
2    1493  [2,1,1,0,0,0,-1,0,-1]  [1,0,0,0,0,2,1,-1,-1]  300  34
...
3+s  610   [2,1,1,0,0,0,0,-1,-1]  [1,0,0,0,0,2,1,0,-2]   700  38
\end{verbatim}
The opposite signature occurs at the $K$-type 
with highest weight $(1,0,0,0,0 | 2,1,0,-2)$ as stated in Theorem \ref{thm-semipm}. And this is the only $K$-type
with indefinite forms up to level $\mathfrak{p}^+$.

Note also that $\Sigma$ also has opposite signature at level $\mathfrak{p}^-$ $K$-type $(0,0,0,0,-1|2,1,0,0)$. This is due to the $\lambda$-large block with content $\frac{-1}{2}$ also has indefinite form on the $\mathfrak{p}^-$-level $K$-type. 
\end{example}

\noindent{\it Proof of Theorem \ref{thm-upq} for Case (b).} Suppose the combinatorial $\theta$-stable data of $\Pi$ consists of a $\lambda$-large block of the form \eqref{eq-blocks}. There are a few possibilities for the blocks next to the $\lambda$-large $\gamma$-block. We only present the case when both neighboring blocks are not rectangular -- the other cases (i.e. there is one or two rectangular neighboring blocks) can be proved by similar arguments.

\medskip
There are four possibilities if both neighboring blocks are not rectangular:
\begin{enumerate}
\item \begin{tikzpicture}
\draw
    (1,0) 
-- (2,0)
 -- (1.5,-1)  
-- (0.5,-1);

\draw (0.5,0) node {\Large $\dots$};
\draw (0,-1) node {\Large $\dots$};

\draw (2.8,-0.4) node {$\gamma$-};
\draw (2.8,-0.6) node {block};
\draw (2.8,-0.5) circle (0.7);

\draw
    (5.8,0) 
 -- (4.3,0) 
-- (3.8,-1)
 -- (5.3,-1);

\draw (6.4,0) node {\Large $\dots$};
\draw (5.9,-1) node {\Large $\dots$};

\end{tikzpicture}

\item \begin{tikzpicture}
\draw
    (0.5,0) 
-- (1.5,0)
 -- (2,-1)  
-- (1,-1);

\draw (0,0) node {\Large $\dots$};
\draw (0.5,-1) node {\Large $\dots$};

\draw (2.8,-0.4) node {$\gamma$-};
\draw (2.8,-0.6) node {block};
\draw (2.8,-0.5) circle (0.7);

\draw
    (5.3,0) 
 -- (3.8,0) 
-- (4.3,-1)
 -- (5.8,-1);

\draw (5.9,0) node {\Large $\dots$};
\draw (6.4,-1) node {\Large $\dots$};
\end{tikzpicture}. 
\item \begin{tikzpicture}
\draw
    (0.5,0) 
-- (1.5,0)
 -- (2,-1)  
-- (1,-1);

\draw (0,0) node {\Large $\dots$};
\draw (0.5,-1) node {\Large $\dots$};

\draw (2.8,-0.4) node {$\gamma$-};
\draw (2.8,-0.6) node {block};
\draw (2.8,-0.5) circle (0.7);

\draw
    (5.8,0) 
 -- (4.3,0) 
-- (3.8,-1)
 -- (5.3,-1);

\draw (6.4,0) node {\Large $\dots$};
\draw (5.9,-1) node {\Large $\dots$};

\end{tikzpicture}

\item \begin{tikzpicture}
\draw
    (1,0) 
-- (2,0)
 -- (1.5,-1)  
-- (0.5,-1);

\draw (0.5,0) node {\Large $\dots$};
\draw (0,-1) node {\Large $\dots$};

\draw (2.8,-0.4) node {$\gamma$-};
\draw (2.8,-0.6) node {block};
\draw (2.8,-0.5) circle (0.7);

\draw
    (5.3,0) 
 -- (3.8,0) 
-- (4.3,-1)
 -- (5.8,-1);

\draw (5.9,0) node {\Large $\dots$};
\draw (6.4,-1) node {\Large $\dots$};

\end{tikzpicture}

\end{enumerate}

In Cases (1) -- (2), consider the $U(r,s)$-module corresponding to the single $\lambda$-large $\gamma$-block. By hypothesis, the $\nu$-value of this $\gamma$-block must have a $> 1$ gap, so one can follow the proof of Theorem \ref{thm-semipm} that it is non-unitary on the level of $\mathfrak{p}^{\pm}$. By Proposition \ref{prop-bottom}, Case (1) is $\mathfrak{p}^-$-bottom layer, Case (2) is $\mathfrak{p}^+$-bottom layer. So the whole module is also non-unitary. 

In Case (3), if the $\gamma$-block of the form \eqref{eq-blocks} is trapezoidal, then the $\gamma$-block is both $\mathfrak{p}^{\pm}$-bottom layer; if the block is a parallelogram, then it is $\mathfrak{p}^-$-bottom layer. In both situations, the arguments are the same as in Case (1) -- (2). If the block is rectangular, i.e.
\begin{center}
\begin{tikzpicture}
\draw
    (0.5,0) 
-- (1.5,0)
 -- (2,-1)  
-- (1,-1);

\draw (0,0) node {\Large $\dots$};
\draw (0.5,-1) node {\Large $\dots$};

\draw 
 (2.2,0) node {\Large $\gamma$}
-- (3,0) node {$\dots$}
-- (3.6,0) node {\Large $\gamma$}
-- (3.6,-1) node {\Large $\gamma$}
-- (3,-1) node {$\dots$}
-- (2.2,-1) node {\Large $\gamma$}
-- cycle;

\draw
    (5.8,0) 
 -- (4.3,0) 
-- (3.8,-1)
 -- (5.3,-1);

\draw (6.4,0) node {\Large $\dots$};
\draw (5.9,-1) node {\Large $\dots$};
\end{tikzpicture}
\end{center}
then one can flip the diagram `upside down' (i.e. $U(p,q)$ to $U(q,p)$) to Case (4).

In Case (4), Theorem \ref{thm-semipm} implies that
the representation corresponding to the semi-spherical component with rightmost block being the $\gamma$-block 
is non-unitary up to $\mathfrak{p}^+$. By the definition of semi-spherical component, its leftmost block satisfies the top-left hypothesis of Proposition \ref{prop-bottom}(a), and the rightmost block (for instance, the rectangular $\gamma$-block above, with the whole diagram flipped upside down) satisfies the bottom-right hypothesis of Proposition \ref{prop-bottom}(a). So 
all such $K \cap L$-types are $\mathfrak{p}^+$ bottom layer (Remark \ref{rmk-blocks}), and hence the indefiniteness still persists on $\Pi$.\qed

\section{Proof of Salamanca-Riba-Vogan's conjecture} \label{sec-general}
In this section, we will prove a slightly stronger version of Conjecture \ref{conj-original} for $G = U(p,q)$: 

\begin{theorem} \label{thm-general}
Let $X$ be an irreducible $(\mathfrak{g}, K)$-
module with a unitarily small lowest $K$-type and infinitesimal character $\Lambda$. Then $\Pi$ is unitary implies that $\Lambda$ satisfies \eqref{eq-hull}. 
Otherwise, it is non-unitary at a unitarily small $K$-type up to level $\mathfrak{p}$.
\end{theorem}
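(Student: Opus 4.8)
The plan is to reduce Theorem~\ref{thm-general} to Theorem~\ref{thm-upq}. Following Conjecture~\ref{conj-original} I take $X$ to be Hermitian (a non-Hermitian $X$ is automatically non-unitary, and then only the membership assertion about $\Lambda$ is at issue). The crucial first step is the dictionary: for $G=U(p,q)$, a lowest $K$-type $\delta$ is \emph{unitarily small} if and only if its $\lambda_a$-datum is \emph{fundamental} in the sense of Definition~\ref{def-fund}, i.e. the contents of any two neighbouring $\lambda_a$-blocks differ by at most $1$. Indeed, $\delta$ being unitarily small means $G(\lambda_u(\delta))=G$, which for $\mathfrak{g}=\mathfrak{gl}(p+q,\mathbb{C})$ is the same as $\lambda_u(\delta)$ being a scalar vector $c\,(1,\dots,1)$; comparing $\lambda_a(\delta)=P(\mu(\delta)+2\rho(\mathfrak{k})-\rho(\mathfrak{g}))$ with $\lambda_u(\delta)=P(\mu(\delta)+2\rho(\mathfrak{k})-2\rho(\mathfrak{g}))$, one checks that this happens exactly when the successive differences of the sorted coordinates of $\lambda_a(\delta)$ are all $\le 1$ (the remaining successive differences being $0$), which is precisely the condition that neighbouring block contents differ by $\le 1$. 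Since the block \emph{contents} of the combinatorial $\theta$-stable datum of $X$ depend only on $\lambda_a(\delta)$, it follows that whenever $X$ has a unitarily small lowest $K$-type, $X$ is a fundamental module; moreover all lowest $K$-types of $X$ share the same $\lambda_a$-datum up to flipping parallelogram blocks (Example~\ref{eg-lkts}), which does not change block contents, so \emph{every} lowest $K$-type of $X$ is unitarily small.

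With this in hand I would apply Theorem~\ref{thm-upq} to the fundamental module $X$. If $X$ is unitary, Theorem~\ref{thm-upq} gives $\langle\Lambda,\alpha^{\vee}\rangle\le 1$ for every simple root $\alpha\in\Delta^+(\mathfrak{g},\mathfrak{h})$, and it remains to deduce that $\Lambda\in\lambda_u(\delta)+\big(\text{conv}(W(\mathfrak{g},\mathfrak{h})\cdot\rho(\mathfrak{g}))\big)$. Take the dominant representative $\Lambda=(\Lambda_1\ge\dots\ge\Lambda_{p+q})$. Its coordinate sum equals that of $\lambda_a(\delta)$ (the $\nu$-coordinates lie in the semisimple part and cancel in pairs) and hence that of $\lambda_u(\delta)=c\,(1,\dots,1)$, so $v:=\Lambda-c\,(1,\dots,1)$ is dominant with $\sum_i v_i=0$ and $v_i-v_{i+1}\le 1$. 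An elementary majorization estimate then gives $\sum_{i\le k}v_i\le\sum_{i\le k}\rho(\mathfrak{g})_i=\tfrac{k(p+q-k)}{2}$ for all $k$: if $v_{k+1}\ge\tfrac{p+q-1-2k}{2}$ this follows from $\sum_{j>k}v_j\ge (p+q-k)v_{k+1}-\binom{p+q-k}{2}$, and otherwise $v_k\le v_{k+1}+1<\tfrac{p+q+1-2k}{2}$ and it follows from $\sum_{j\le k}v_j\le kv_k+\binom{k}{2}$. Thus $v\prec\rho(\mathfrak{g})$, so $v$ lies in $\text{conv}(W(\mathfrak{g},\mathfrak{h})\cdot\rho(\mathfrak{g}))$ and $\Lambda$ satisfies \eqref{eq-hull}.

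For the remaining assertion, suppose $\Lambda$ does \emph{not} satisfy \eqref{eq-hull}. Reading the previous computation contrapositively, $\Lambda$ cannot be a dominant weight of the form $c\,(1,\dots,1)+v$ with $v\prec\rho(\mathfrak{g})$, so some simple root $\alpha$ has $\langle\Lambda,\alpha^{\vee}\rangle>1$; since $X$ is a fundamental module, Theorem~\ref{thm-upq} then shows that $X$ is non-unitary up to level $\mathfrak{p}$ in the sense of Definition~\ref{def-upto}. As all the lowest $K$-types $\delta_i$ of $X$ are unitarily small and the extra $K$-types appearing in Definition~\ref{def-upto} lie in the $\delta_i\otimes\mathfrak{p}$, this is exactly the statement that $X$ is non-unitary at a unitarily small $K$-type up to level $\mathfrak{p}$.

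The main obstacle I expect is the first step: giving a clean proof of the equivalence ``unitarily small $\Leftrightarrow$ fundamental $\lambda_a$-datum'', and in particular the verification that every lowest $K$-type of a fundamental module is itself unitarily small, so that the indefiniteness produced by Theorem~\ref{thm-upq} is genuinely detected on unitarily small $K$-types. Everything after that is Theorem~\ref{thm-upq} together with the routine type $A$ majorization lemma translating the inequalities $\langle\Lambda,\alpha^{\vee}\rangle\le 1$ into membership in the shifted convex hull \eqref{eq-hull}.
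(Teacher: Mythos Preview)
Your proposal rests on the claimed equivalence ``$\delta$ unitarily small $\Longleftrightarrow$ the $\lambda_a$-datum of $\delta$ is fundamental'', and this is where the argument breaks. One implication is correct (fundamental $\Rightarrow$ unitarily small, exactly as you say: if consecutive contents differ by at most $1$ then $\lambda_a-\rho(\mathfrak g)$ is anti-dominant and its projection $P$ is the scalar vector given by the average; this is the content of Proposition~\ref{prop-svfund}). The converse, however, is false. The $\lambda_a$-datum of Example~\ref{eg-barbasch} has neighbouring contents $-\tfrac12$ and $-2$ differing by $\tfrac32>1$, so it is \emph{not} fundamental; yet a direct isotonic-regression computation of $P(\lambda_a-\rho(\mathfrak g))$ for the sorted $\lambda_a=(3,2,1,1,1,0,-\tfrac12,-\tfrac12,-\tfrac12,-\tfrac12,-2)$ collapses all coordinates to the common average $\tfrac{4}{11}$, so $\lambda_u$ is scalar and $\delta$ \emph{is} unitarily small. (A smaller counterexample is $G=U(3,1)$ with block contents $\tfrac32,0,-\tfrac32$: again $\lambda_a-\rho=(0,-\tfrac12,\tfrac12,0)$ projects to $0$.) In general, pooling in the projection $P$ can wipe out gaps larger than $1$ between blocks, so unitarily small modules need not be fundamental, and you cannot invoke Theorem~\ref{thm-upq} directly.

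This is precisely why the paper's proof is not a one-liner. It partitions the combinatorial datum of $X$ into maximal fundamental sub-data separated by gaps $>1$, and argues by induction on the number of such pieces. The base case \emph{is} your argument (Proposition~\ref{prop-svfund}); the inductive step uses the $\theta$-stable parabolic with Levi $U(p_1,q_1)\times U(p_2,q_2)$ splitting off the last fundamental piece, observes that the level-$\mathfrak p$ $K$-types are bottom layer across this split (so non-unitarity of either factor propagates), and then combines the two convex-hull memberships for the pieces with a bound $m_1-m_2\le\tfrac{r_1+r_2}{2}$ forced by the unitarily-small hypothesis to land $\Lambda$ in the hull \eqref{eq-hull} for $G$. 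Your majorization lemma is essentially the base case of this; what is missing is the mechanism to handle several fundamental pieces at once. A secondary point: even in the fundamental case, the assertion that the offending level-$\mathfrak p$ $K$-type is itself unitarily small is not automatic from Definition~\ref{def-upto}; the paper appeals to the proof of \cite[Proposition~7.18]{SRV98} for this.
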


Here is a special case of Theorem \ref{thm-general} for fundamental representations:
\begin{proposition} \label{prop-svfund}
    Let $G = U(p,q)$, and $\Pi$ be a fundamental representation (Definition \ref{def-fund}) with infinitesimal character $\Lambda$. Then all lowest $K$-types of $\Pi$ are unitarily small, and Theorem \ref{thm-general} holds for $\Pi$.
 \end{proposition}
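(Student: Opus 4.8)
The plan is to deduce the proposition from Theorem~\ref{thm-upq} together with a combinatorial criterion for unitary smallness. Put $n:=p+q$ and recall $\lambda_u(\delta)=P(\mu(\delta)+2\rho(\mathfrak k)-2\rho(\mathfrak g))$, where $P$ is the nearest-point projection onto the closed dominant chamber $C=\{x_1\ge\dots\ge x_n\}\subseteq\mathfrak h^*$. Since $C$ is invariant under translation by $\mathbf 1:=(1,\dots,1)$ and its polar cone is cut out by the partial-sum functionals $x\mapsto x_1+\dots+x_k$ ($1\le k\le n-1$), the weight $\lambda_u(\delta)$ is central — equivalently $G(\lambda_u(\delta))=G$, equivalently $\delta$ is unitarily small — exactly when the translate of $\mu(\delta)+2\rho(\mathfrak k)-2\rho(\mathfrak g)$ by $c\mathbf 1$, with $c$ the average of its coordinates, lies in that polar cone. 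Writing $z_1\ge\dots\ge z_n$ for the decreasing rearrangement of $\mu(\delta)+2\rho(\mathfrak k)$ and $c=\frac1n\mathrm{tr}\,\mu(\delta)$ (the average of the coordinates of $\mu(\delta)$), this reads
\[
\delta\ \text{unitarily small}\quad\Longleftrightarrow\quad \sum_{i=1}^{k}z_i\ \le\ k(n-k)+kc\qquad(1\le k\le n-1).
\]
By the algorithm of Section~\ref{sec-upq} (sort $\mu(\delta)+2\rho(\mathfrak k)$, then subtract $\frac{n+1}{2}-i$ from the $i$-th entry, with a tie correction which only \emph{lowers} $\sum_{i\le k}z_i$), this inequality is implied by $\lambda_a(\delta)-c\mathbf 1\in\mathrm{conv}\!\big(W(\mathfrak g,\mathfrak h)\cdot\rho(\mathfrak g)\big)$, i.e. by the sorted multiset of block contents of $\delta$ being majorised by the arithmetic progression $\rho(\mathfrak g)+c\mathbf 1$ (cf.\ \cite{SR88}, \S 8).

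First I would settle unitary smallness. If $\Pi$ is fundamental with lowest $K$-type $\delta$, the sorted multiset of block contents of its $\lambda_a$-datum is non-increasing with all successive differences $\le 1$ (they are $0$ inside a block and $\le 1$ between neighbouring blocks, by Definition~\ref{def-fund}) and has sum $\mathrm{tr}\,\lambda_a(\delta)=\mathrm{tr}\,\mu(\delta)=nc$; an arithmetic progression with the same sum and all successive differences equal to $1$ — namely $\rho(\mathfrak g)+c\mathbf 1$ — then majorises it, so $\delta$ is unitarily small by the criterion above. Finally, by Theorem~\ref{thm-lambdaa}(a) and Example~\ref{eg-lkts} any two lowest $K$-types of $\Pi$ differ only by reflections of parallelogram $\lambda_a$-blocks, which fix block contents; hence all lowest $K$-types of $\Pi$ share this multiset, and all are unitarily small. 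This proves the first assertion.

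For the statement of Theorem~\ref{thm-general} applied to $\Pi$, I would first record the elementary fact that a dominant $\Lambda$ with $\mathrm{tr}\,\Lambda=nc$ satisfying $\langle\Lambda,\alpha^\vee\rangle\le 1$ for all simple $\alpha$ (that is, $\Lambda_i-\Lambda_{i+1}\le 1$ for every $i$) has $\Lambda-c\mathbf 1\in\mathrm{conv}(W(\mathfrak g,\mathfrak h)\cdot\rho(\mathfrak g))$: the polytope of such $\Lambda$ is a cube whose vertices are indexed by the subsets $S\subseteq\{1,\dots,n-1\}$ of positions where the successive gap equals $1$, and a short computation shows each vertex satisfies the partial-sum inequalities defining $c\mathbf 1+\mathrm{conv}(W(\mathfrak g,\mathfrak h)\cdot\rho(\mathfrak g))$, with equality exactly at $S=\{1,\dots,n-1\}$. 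Since a lowest $K$-type $\delta$ of $\Pi$ is unitarily small we have $\lambda_u(\delta)=c\mathbf 1$ with $c=\frac1n\mathrm{tr}\,\Lambda$, so this says precisely that the hypothesis of Theorem~\ref{thm-upq} forces \eqref{eq-hull}, and conversely that the convex hull \eqref{eq-hull} contains every dominant $\Lambda$ with $\langle\Lambda,\alpha^\vee\rangle\le 1$ for all simple $\alpha$. Therefore, if $\Pi$ is unitary then Theorem~\ref{thm-upq} gives $\langle\Lambda,\alpha^\vee\rangle\le1$ for all simple $\alpha$, hence $\Lambda$ satisfies \eqref{eq-hull}; and if $\Lambda$ fails \eqref{eq-hull}, then it also fails that bound, so Theorem~\ref{thm-upq} shows $\Pi$ is non-unitary up to level $\mathfrak p$.

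The main obstacle, which is where the genuine work lies, is to refine this last conclusion to the precise ``otherwise'' clause of Theorem~\ref{thm-general}: indefiniteness at a \emph{unitarily small} $K$-type up to level $\mathfrak p$. Theorem~\ref{thm-upq} produces opposite signatures on a lowest $K$-type $\delta_i$ of $\Pi$ (unitarily small, by the above) and on some $\chi\in\delta_i\otimes\mathfrak p$, and a $K$-type in $\delta_i\otimes\mathfrak p$ is not a priori unitarily small. I would handle this by going through the explicit witnessing $K$-types in the proof of Theorem~\ref{thm-upq} — those of \eqref{eq-mupmx}, \eqref{eq-pplus1}, \eqref{eq-pminus1}, the four weights listed in Theorem~\ref{thm-semipm}, and their bottom-layer transfers via Corollary~\ref{cor-bottom} and Remark~\ref{rmk-blocks} — and re-running the majorisation estimate above with a single coordinate of the sorted $\mu+2\rho(\mathfrak k)$ raised by $1$ and another lowered by $1$; the extra slack needed is available because a fundamental $\lambda_a$-datum can meet the staircase bound of the criterion only when all its blocks are singletons, a configuration one disposes of directly. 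This shows the witnessing $K$-types are unitarily small and completes the proof.
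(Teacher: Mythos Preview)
Your argument is correct in outline and follows the same architecture as the paper's: deduce unitary smallness of the lowest $K$-types from the shape of a fundamental $\lambda_a$-datum, then invoke Theorem~\ref{thm-upq} for both directions of Theorem~\ref{thm-general}. The differences are in how much you recompute versus cite.

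For the first assertion, the paper simply quotes the identity $\lambda_u = P(\lambda_a - \rho(\mathfrak g))$ from \cite[Section~2]{SRV98}; since a fundamental $\lambda_a$-datum has sorted coordinates with successive differences $\le 1$, this projection is automatically central. Your majorisation criterion is exactly this identity unpacked, so you are re-deriving rather than bypassing it. That is fine, and your treatment of the tie correction is in the right direction, but note that the sorted $z_j$'s and the coordinates of $\lambda_a(\delta)$ need not be in the same order, so the passage ``this inequality is implied by $\lambda_a(\delta)-c\mathbf 1\in\mathrm{conv}(W\cdot\rho)$'' requires a little more care than you give it; the cleanest route is precisely the cited identity.

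For the ``otherwise'' clause, the paper again defers to \cite[Proposition~7.18]{SRV98}, whose proof gives a general mechanism showing that the level-$\mathfrak p$ witness produced by a Dirac-type argument from a unitarily small lowest $K$-type is itself unitarily small. Your proposal to verify this by hand for each of the explicit $K$-types arising in the proof of Theorem~\ref{thm-upq} is a legitimate alternative, but the slack argument you sketch (``equality only when all blocks are singletons'') is not quite enough as stated: equality in the $k$-th partial-sum inequality for a fundamental datum can occur whenever the first $k$ sorted contents form an arithmetic progression of step $1$, which happens for configurations other than all-singleton data. You would need to track which coordinate is raised and which is lowered in each of \eqref{eq-mupmx}, \eqref{eq-pplus1}, \eqref{eq-pminus1} and the semi-spherical cases, and check that the potential $+1$ lands at a position where there is genuine slack. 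This can be done, but it is exactly the bookkeeping that the citation to \cite{SRV98} is designed to absorb.
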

\begin{proof}
The first statement follows immediately from the fact that $\lambda_u = P(\lambda_a - \rho(\mathfrak{g}))$, which is proved in \cite[Section 2]{SRV98}. As for the second statement, if $\Pi$ is unitary, then Theorem \ref{thm-upq} implies that 
$\langle \Lambda, \alpha^{\vee}\rangle \leq 1$ for all simple roots $\alpha$, so that $\Lambda$ must lie in the convex hull \eqref{eq-hull} centered at
\[\lambda_u(\delta) = (\overbrace{m, \dots, m}^{(p+q)\ terms}),\]
where $m$ is mean of the coordinates of $\Lambda$. Otherwise, Theorem \ref{thm-upq} implies that $\Pi$ is not unitary up to level $\mathfrak{p}$, and one can apply the proof of \cite[Proposition 7.18]{SRV98} to conclude that the non-unitary level $\mathfrak{p}$ $K$-type of $\Pi$ is also unitarily small.
\end{proof}

We now prove the general case of Theorem \ref{thm-general}:

\medskip
\noindent {\it Proof of Theorem \ref{thm-general}.}\  First of all, partition the combinatorial $\theta$-stable datum of $X$ so that each sub-datum is fundamental in the sense of Definition \ref{def-fund}, and the gap between two neighbouring sub-data is $> 1$, 
i.e. the $\lambda_a$-datum of $X$ is of the form:

\begin{center}
\begin{tikzpicture}
\draw (-1.1,-0.3) node {\scriptsize{fund.}};
\draw (-1.1,-0.7) node {\scriptsize{datum 1}};
\draw (-1.1,-0.5) circle (0.6);


\draw [|-|] (-0.5,-1) -- node[below] {$>1$} (0.5,-1);

\draw (1,-0.3) node {\scriptsize{fund.}};
\draw (1,-0.7) node {\scriptsize{datum 2}};
\draw (1,-0.5) circle (0.6);


\draw   [|-|] (1.7,-1) -- node[below] {$>1$} (2.7,-1);

\draw (3.4,-0.3) node {\scriptsize{fund.}};
\draw (3.4,-0.7) node {\scriptsize{datum 3}};
\draw (3.4,-0.5) circle (0.6);

\draw (4.7,-0.5) node {\Large $\dots$};



\end{tikzpicture}
\end{center}
for instance, the $\lambda_a$-datum in Example \ref{eg-barbasch} is partitioned into two {\bf fundamental data}:

\begin{center}
\begin{tikzpicture}

\draw [|-|](-0.3,-1.7) -- node[below] {fund. datum 1} (5.8,-1.7);

\draw
    (0,0) 
 -- (0.25,0) node {\Large $3$}
-- (0.5,0) 
-- (0.35,-1) 
-- (0.15,-1) 
 -- cycle;

\draw
    (1,0) 
 -- (1.25,0) node {\Large $2$}
-- (1.5,0) 
-- (1.35,-1) 
-- (1.15,-1) 
 -- cycle;

\draw
    (2,0) node {\Large $1$}
 -- (3,0) node {\Large $1$}		
 -- (2.7,-1) 
-- (2.5,-1) node {\Large $1$}
-- (2.3,-1) 
 -- cycle;

\draw
    (3.5,0) 
 -- (3.75,0) node {\Large $0$}
-- (4,0) 
-- (3.85,-1) 
-- (3.65,-1) 
 -- cycle;

\draw
    (4.5,0) node {\Large $\frac{-1}{2}$}
 -- (5.5,0) node {\Large $\frac{-1}{2}$}
-- (5.5,-1) node {\Large $\frac{-1}{2}$}
-- (4.5,-1) node {\Large $\frac{-1}{2}$}
 -- cycle;

\draw [|-|](7.5,-1.7) -- node[below] {fund. datum 2} (8.75,-1.7);

\draw
    (7.95,0) 
 -- (8.15,0) 
-- (8.3,-1) 
-- (8.05,-1) node {\Large $-2$}
-- (7.8,-1) 
 -- cycle;
\end{tikzpicture} 
\end{center}

We proceed by induction on the number of fundamental $\theta$-stable data
of $X$. To begin with, suppose $X$ contains only one fundamental $\theta$-stable
datum, then $X$ is a fundamental representation, and the theorem is proved in Proposition \ref{prop-svfund}. 

By induction hypothesis, suppose the theorem holds for all $X_k$ having $k$ fundamental $\theta$-stable data. Now let $X_{k+1}$ be such that it has $(k+1)$ fundamental $\theta$-stable data. Separate the first $k$ $\theta$-stable data and the last $\theta$-stable datum of $X_{k+1}$, and let $Y$ (in $U(p_1,q_1)$), $Z$ (in $U(p_2,q_2)$) be the irreducible representations corresponding to these two $\theta$-stable data.

\medskip
We {\it claim} that the infinitesimal characters of $Y$ and $Z$ have to be in the convex hull \eqref{eq-hull}. Indeed, suppose on the contrary that one of them lies outside of \eqref{eq-hull}, then by induction hypothesis (on $Y$) or by Proposition \ref{prop-svfund} (on $Z$), it must be non-unitary up to level $\mathfrak{p}$. However, by considering the $\theta$-stable parabolic subalgebra $\mathfrak{q}_0 = \mathfrak{l}_0 + \mathfrak{u}_0$ with
\[\mathfrak{l}_0 = \mathfrak{l}_{0,1} \oplus \mathfrak{l}_{0,2} := \mathfrak{u}(p_1,q_1) \oplus \mathfrak{u}(p_2,q_2),\]
then Proposition \ref{prop-bottom} (or Remark \ref{rmk-blocks}) and the partitioning of $\theta$-stable data imply that all $(L \cap K)$-types of level $\mathfrak{p}$ are bottom layer. This implies that $X_{k+1}$ is also non-unitary up to level $\mathfrak{p}$, which gives a contradiction.

\medskip
Since $X_{k+1}$ is a lowest $K$-type subquotient of $\mathcal{R}_{\mathfrak{q}}(Y \boxtimes Z)$, the arguments in the previous paragraph along with the characterization of convex hull given in \cite[Proposition 1.10]{SRV98} implies that the infinitesimal character of $X_{k+1}$ is of the form
\begin{equation} \label{eq-lambdayz}
    \Lambda = (\overbrace{m_1, \cdots, m_1}^{r_1\ terms};\overbrace{m_2, \cdots, m_2}^{r_2\ terms}) + \sum_{\alpha \in \Delta^+(\mathfrak{l},\mathfrak{h})} c_{\alpha}\alpha, \quad \quad |c_{\alpha}| \leq \frac{1}{2}, 
\end{equation}
where $r_1 := p_1+q_1$, $r_2 := p_2+q_2$, and $m_1$ (resp. $m_2$) is the mean of coordinates of the first $k$ (resp. last) fundamental $\theta$-stable datum.
Using the notations in \eqref{eq-lambdayz}, one needs to prove that
$\Lambda$ lies in the convex hull \eqref{eq-hull} centered at 
\begin{equation} \label{eq-m}
\lambda_u(\delta) = (\overbrace{m, \cdots, m}^{r_1+r_2\ terms}), \quad \quad \quad where\ \ m = \frac{r_2m_1 + r_1m_2}{r_1+r_2}.\end{equation}

To see so, first note that the unitarily small hypothesis on $X_{k+1}$ implies that $m_1 \geq m_2$ in \eqref{eq-lambdayz} cannot be `far apart'. More explicitly, by looking at the formula of $\Lambda = (\lambda_a(\delta), \nu)$ in \eqref{eq-lambdayz}, one can conclude for $\lambda_a(\delta)$, the mean of its first $r_1$ (resp. last $r_2$) coordinates equals $m_1$ (resp. $m_2$). 
Therefore, the mean of the first $r_1$ (resp. last $r_2$) coordinates of $\lambda_a(\delta) - \rho(\mathfrak{g})$ is $m_1 - \frac{r_2}{2}$ (resp. $m_2 + \frac{r_1}{2}$).

Suppose on contrary that $m_1 - m_2 > \frac{r_1+r_2}{2}$. Then 
$\lambda_a(\delta) - \rho(\mathfrak{g})$
is closer to 
the point 
\[(\overbrace{m_1 - \frac{r_2}{2}, \cdots, m_1 - \frac{r_2}{2}}^{r_1\ terms};\overbrace{m_2 + \frac{r_1}{2}, \cdots, m_2 + \frac{r_1}{2}}^{r_2\ terms})\] 
in the dominant Weyl chamber than $(\overbrace{m, \cdots, m}^{r_1+r_2\ terms})$. In other words, $\lambda_u(\delta) = P(\lambda_a(\delta)-\rho(\mathfrak{g}))$ is not equal to $(\overbrace{m, \cdots, m}^{r_1+r_2\ terms})$, contradicting the fact that the lowest $K$-type of $X_{k+1}$ is unitarily small.
Therefore, one must have
\begin{equation} \label{eq-m1m2}
    0 \leq m_1 - m_2 \leq \frac{r_1+r_2}{2}.
\end{equation}

Combining Equations \eqref{eq-lambdayz}, \eqref{eq-m} and \eqref{eq-m1m2}, one can show that $\Lambda$ lies in the convex hull \eqref{eq-hull}. More explicitly,  
\begin{align*}
\Lambda - \lambda_u(\delta) &= (m_1-m, \cdots, m_1-m; m_2 - m, \cdots, m_2 - m) + \sum_{\alpha \in \Delta^+(\mathfrak{l},\mathfrak{h})} c_{\alpha}\alpha \\
&= \frac{m_1-m_2}{r_1+r_2}(r_2, \cdots, r_2; -r_1, \cdots, -r_1) + \sum_{\alpha \in \Delta^+(\mathfrak{l},\mathfrak{h})} c_{\alpha}\alpha \\
&= \frac{m_1-m_2}{r_1+r_2} \sum_{\beta \in \Delta^+(\mathfrak{u},\mathfrak{h})} \beta + \sum_{\alpha \in \Delta^+(\mathfrak{l},\mathfrak{h})} c_{\alpha}\alpha
\end{align*}
which is a sum of positive roots in $\Delta^+(\mathfrak{g},\mathfrak{h})$, whose coefficients having absolute values $\leq \frac{1}{2}$. Therefore, the result follows from \cite[Proposition 1.10]{SRV98} again. \qed

\section{Proof of Vogan's FPP Conjecture} \label{sec-fpp}
We now proceed to proving Conjecture \ref{conj-fpp} for $G=U(p,q)$ by applying the results in Section \ref{sec-fund}.
To do so, one needs to understand the good range condition (Definition \ref{def-good}) using combinatorial $\theta$-stable data.

\medskip
Let $X$ be an irreducible, Hermitian $(\mathfrak{g},K)$-module with real infinitesimal character and a lowest $K$-type $\delta$.
All $\theta$-stable parabolic subalgebras $\mathfrak{q}'$ containing the quasisplit parabolic subalgebra $\mathfrak{q}$ defined by $\lambda_a(\delta)$
are in $1-1$ correspondence with the set of partitions of the 
$\lambda_a$-datum in the combinatorial $\theta$-stable datum of $X$. For instance, the smallest possible Levi subalgebra $\mathfrak{q}$ corresponds to the finest partition of the $\lambda_a$-datum, each consisting of a single $\gamma$-block. On the other extreme, $\mathfrak{g}$ corresponds to the single partition containing all $\gamma$-blocks.

As a consequence, all such $\theta$-stable parabolic subalgebras $\mathfrak{q}' \supset \mathfrak{q}$ corresponds to a partition 
\[\mathcal{D} = \bigsqcup_{i=1}^k \mathcal{D}_i\]
of the $\theta$-stable datum of $X$, and the infinitesimal character $\Lambda = (\lambda_a(\delta),\nu)$ of $X$ is also partitioned into
\[(\Lambda_1, \cdots, \Lambda_k)\]
up to permutation of coordinates. Define the $i^{th}$-{\bf segment} of the partition by the line segment $[e_i, b_i]$, where $e_i$ (resp. $b_i$) is the largest (resp. smallest) number
in $\Lambda_i$. 

\begin{example} \label{eg-good1}
    Let $G = U(5,4)$, and $\Sigma$ be the irreducible representation given in Example \ref{eg-gammabig}. Suppose we partition the combinatorial $\theta$-stable datum of $\Sigma$ by $\mathcal{D} = \mathcal{D}_1 \sqcup \mathcal{D}_2$, where
    \[\mathcal{D}_1 := \{1\text{-block},\ (\frac{1}{2})\text{-block}\} \quad \quad \quad \mathcal{D}_2 := \{0\text{-block},\ (\frac{-1}{2})\text{-block}\}\]
Then $\Lambda_1 = (1,1,\frac{1}{2}+\frac{1}{2},\frac{1}{2}-\frac{1}{2}) = (1,1,1,0)$ and 
    $\Lambda_2 = (0,0+0,0+0,\frac{-1}{2}+\frac{7}{2},\frac{-1}{2}-\frac{7}{2}) = (0,0,0,3,-4)$, and hence
    \[[e_1,b_1]=[1,0], \quad \quad \quad [e_2, b_2] = [3,-4].\]
\end{example}

The following lemma determines whether $X$
is cohomologically induced from some proper $\theta$-stable parabolic subalgebra in good range by looking at its combinatorial $\theta$-stable data:

\begin{lemma} \label{lem-good}
    Retain the setting in the above paragraphs. 
    Then $X$
    is cohomologically induced from a $\theta$-stable parabolic subalgebra $\mathfrak{q}'$ if and only if the segments of the partition of the combinatorial $\theta$-stable datum of $X$ corresponding to $\mathfrak{q}'$ satisfy the following inequalities:
    \[e_1 \geq b_1 > e_2 \geq b_2 > \cdots > e_k \geq b_k.\]
\end{lemma}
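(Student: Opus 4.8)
## Proof Plan for Lemma \ref{lem-good}

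The plan is to translate the good-range condition \eqref{good} into the language of segments. Recall that $X$ is cohomologically induced in the good range from a $\theta$-stable parabolic $\mathfrak{q}' = \mathfrak{l}' + \mathfrak{u}'$ (corresponding to a partition $\mathcal{D} = \bigsqcup_{i=1}^k \mathcal{D}_i$ with induced subquotient $X_L$ whose infinitesimal character is, up to permutation, $(\Lambda_1, \dots, \Lambda_k)$) precisely when $\mathrm{Re}\langle \lambda_L + \rho(\mathfrak{u}'), \alpha \rangle > 0$ for all $\alpha \in \Delta(\mathfrak{u}', \mathfrak{h})$. For $G = U(p,q)$, the roots of $\mathfrak{u}'$ with respect to the (fundamental) Cartan are of the form $e_a - e_b$ where the $a$-th coordinate lies in an earlier block $\mathcal{D}_i$ and the $b$-th coordinate lies in a later block $\mathcal{D}_j$ ($i < j$). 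So the plan is: first write down $\lambda_L + \rho(\mathfrak{u}')$ explicitly in coordinates — the $\mathcal{D}_i$-coordinates of $\lambda_L$ are exactly the entries of $\Lambda_i$ (suitably dominant within the block), and $\rho(\mathfrak{u}')$ adds a constant shift to each block, namely half the number of coordinates in all later blocks minus half the number in all earlier blocks. Then the pairing $\langle \lambda_L + \rho(\mathfrak{u}'), e_a - e_b\rangle > 0$ becomes a collection of strict inequalities between shifted coordinates of $\Lambda_i$ and $\Lambda_j$.

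The key simplification I would exploit is that all blocks have widths differing by at most one within a single $\gamma$-block, but more importantly that the $\rho(\mathfrak{u}')$-shift is uniform across each block $\mathcal{D}_i$: it depends only on $i$, not on the individual coordinate. Hence the worst-case (smallest) pairing for a fixed pair $(i,j)$ with $i<j$ is achieved by pairing the \emph{smallest} shifted coordinate coming from $\mathcal{D}_i$ against the \emph{largest} shifted coordinate coming from $\mathcal{D}_j$, i.e. $b_i + c_i$ against $e_j + c_j$ where $c_i, c_j$ are the respective $\rho(\mathfrak{u}')$-shifts. A short computation shows $c_i - c_j = \tfrac12(|\mathcal{D}_i| + |\mathcal{D}_j|) + (\text{sizes of blocks strictly between})$, which is an integer (or half-integer) that is at least $1$ whenever $i<j$ and strictly bigger when there are blocks in between. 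Tracking these shifts carefully, the full system of good-range inequalities collapses to the single chain of inequalities for \emph{consecutive} segments: $e_i \geq b_i$ (automatic, from the definition of a segment) and $b_i > e_{i+1}$ for each $i$, which telescopes to $e_1 \geq b_1 > e_2 \geq b_2 > \cdots > e_k \geq b_k$. Conversely, if this chain holds, then reversing the computation shows every $\alpha \in \Delta(\mathfrak{u}',\mathfrak{h})$ pairs strictly positively, so $X_L$ is in the good range and $X = \mathcal{L}_{\mathfrak{q}',S}(X_L)$ by \cite[Theorem 0.50]{KV95}.

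There is one point requiring care: I must check that it suffices to consider the pairing against \emph{simple} roots of $\mathfrak{u}'$ relative to the chosen positivity, or equivalently that positivity for consecutive-block pairs forces positivity for all pairs — this is where the monotonicity of the shifts $c_i$ (which are strictly decreasing in $i$) does the work, since for $i < j$ one has $b_i + c_i \geq b_{i+1} + c_{i+1} + \text{(gap)} \geq \cdots \geq e_j + c_j + \text{(positive)}$. I also need to handle the subtlety that a given combinatorial $\lambda_a$-datum assigns within-block coordinates that are already ordered, so that $e_i$ and $b_i$ as defined (largest/smallest entries of $\Lambda_i$) genuinely are the extreme shifted values up to the uniform shift; this follows from how $\Lambda = (\lambda_a(\delta), \nu)$ distributes its coordinates among blocks, as described in Section \ref{sec-upq}.

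\textbf{Main obstacle.} The main difficulty I anticipate is bookkeeping: correctly computing $\rho(\mathfrak{u}')$ restricted to each block and verifying that the half-integer shifts $c_i - c_j$ land exactly so that the good-range \emph{strict} inequality $\langle \lambda_L + \rho(\mathfrak{u}'),\alpha\rangle > 0$ is equivalent to the \emph{strict} inequality $b_i > e_{i+1}$ on the \emph{un}shifted segments, with the non-strict $e_i \geq b_i$ coming for free. In particular one must make sure no off-by-$\tfrac12$ error creeps in when passing between the two conventions, and that the case of blocks of unequal size (trapezoids) does not spoil the uniformity of the shift — it does not, because the shift counts coordinates, and a trapezoid of size $(r, r\pm 1)$ still contributes a fixed total to the $\rho(\mathfrak{u}')$ of every coordinate in a later block. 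Once the shift computation is pinned down, the equivalence is essentially immediate.
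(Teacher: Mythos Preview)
Your plan contains a genuine misidentification that, if carried through, would not yield the stated inequalities. You write that ``the $\mathcal{D}_i$-coordinates of $\lambda_L$ are exactly the entries of $\Lambda_i$'' and then propose to add a block-constant $\rho(\mathfrak{u}')$-shift $c_i$ on top. But in the paper's setup the $\Lambda_i$ are defined as the partition of the infinitesimal character $\Lambda$ of $X$ itself, not of $X_L$. Since under cohomological induction the infinitesimal characters are related by $\Lambda = \lambda_L + \rho(\mathfrak{u}')$, the quantity appearing in the good-range condition \eqref{good} is \emph{already} $\Lambda$: one has $\langle \lambda_L + \rho(\mathfrak{u}'),\alpha\rangle = \langle \Lambda,\alpha\rangle$. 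There is no further shift to compute, and the entire ``main obstacle'' you flag --- the bookkeeping of the $c_i$ and the worry about off-by-$\tfrac12$ errors --- simply evaporates.

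Had you executed your plan as written, the worst-case inequality for consecutive blocks would read $b_i + c_i > e_{i+1} + c_{i+1}$, and since by your own computation $c_i - c_{i+1} > 0$, this is \emph{strictly weaker} than $b_i > e_{i+1}$; the claimed ``collapse'' to the unshifted chain is therefore unjustified from your setup. The paper accordingly treats the lemma as immediate from Definition~\ref{def-good}: for $\alpha = \epsilon_a - \epsilon_b \in \Delta(\mathfrak{u}',\mathfrak{h})$ with the $a$-th coordinate in $\mathcal{D}_i$ and the $b$-th in $\mathcal{D}_j$ ($i<j$), the condition $\langle \Lambda,\alpha\rangle>0$ says every entry of $\Lambda_i$ exceeds every entry of $\Lambda_j$, i.e.\ $b_i > e_j$; by transitivity this reduces to the consecutive chain $b_i > e_{i+1}$, and $e_i \geq b_i$ is automatic.
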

The lemma follows immediately from Definition \ref{def-good}. For instance, the above lemma
implies that the module $\Sigma$ in Example \ref{eg-good1} cannot be cohomologically induced from $\mathfrak{q}' = \mathfrak{l}'+\mathfrak{u}'$ with $\mathfrak{l}'_0 = \mathfrak{u}(2,2) \oplus \mathfrak{u}(3,2)$ in good range.
Indeed, one can easily check that $\Sigma$ cannot be cohomologically induced from any proper parabolic subalgebra $\mathfrak{q}' \supset \mathfrak{q}$ of $\mathfrak{g}$. In other words, $\Sigma$ is \emph{fully supported}.

\bigskip
\noindent {\it Proof of Conjecture \ref{conj-fpp} for  $U(p,q)$.} As in the proof of Conjecture \ref{conj-original} in Section \ref{sec-general}, partition the combinatorial $\theta$-stable datum of $X$ into fundamental data $\mathcal{D} = \bigsqcup_{i=1}^k \mathcal{D}_i$. Note that the infinitesimal character $\Lambda_i$ satisfies Conjecture \ref{conj-fpp} for all $i$, otherwise the same arguments as in Section \ref{sec-general} implies that $X$ is not unitary up to level $\mathfrak{p}$. In other words, the coordinates of $\Lambda_i$ in the $i^{th}$ segment $[e_i,b_i]$, rearranged in descending order, must have gaps $\leq 1$. 

By Lemma \ref{lem-good} and the fact that $X$ is not cohomologically induced from any proper $\theta$-stable parabolic subalgebras, the segments $\{[e_i,b_i]\ |\ i = 1, \dots, k\}$ must be \emph{interlaced}, i.e. for any $1 \leq a < b \leq k$ , there
exist $1 \leq$ $i_0 = a$, $i_1$, $\dots$, $i_m = b$ $\leq k$ such that 
\[[e_{i_n}, b_{i_n}] \cap [e_{i_{n+1}}, b_{i_{n+1}}] \neq \phi\] 
for all $n$ (the proof is similar to that of \cite[Proposition 3.1]{DW20}). Combined with the fact that $\Lambda \sim (\Lambda_1, \dots, \Lambda_k)$, where the coordinates of each $\Lambda_i \in [e_i, b_i]$ have gaps $\leq 1$, this implies that the gaps between consecutive coordinates of $\Lambda$ (in descending order) also have gaps $\leq 1$, i.e. $\langle \Lambda, \alpha^{\vee} \rangle \leq 1$ for all simple roots $\alpha$. \qed

\begin{remark}
As a final remark, we note that the proof of Salamanca-Riba-Vogan's conjecture and Vogan's FPP conjecture are based on studies of fundamental representations in Section \ref{sec-fund}. We believe that these representations play a pivotal role in the study of the full unitary dual of $U(p,q)$.
\end{remark}


\end{document}